\newtheorem{thm}{Theorem}
\newtheorem{defn}[thm]{Definition}
\newtheorem{prop}[thm]{Proposition}
\newtheorem{lem}[thm]{Lemma}
\newtheorem{cor}[thm]{Corollary}
\newtheorem{rem}[thm]{Remark}
\begin{document}
\title{Geometric Properties of Orbits of Hermann actions}
\author{Shinji Ohno}
\date{\today}
\maketitle
\begin{abstract}
In this paper, 
we investigate properties of orbits of Hermann actions as submanifolds
without assuming the commutability of involutions which define Hermann actions.
In particular, we compute the second fundamental form of orbits of Hermann action,
and give a sufficient condition for orbits of Hermann action to be weakly reflective (resp. arid) submanifolds.
\end{abstract}

\tableofcontents
\section{Introduction}
Let $G$ be a compact semisimple Lie group, 
and  $\theta_{1}, \theta_{2}$ be involutive automorphisms of $G$.
For $i=1,2$, $G_{\theta_{i}}$ and $(G_{\theta_{i}})_{0}$ denote the fixed point set of $\theta_{i}$ 
and identity component of $G_{\theta_{i}}$, respectively.
Take a subgroup $K_{i}$ of $G$ which satisfies 
$(G_{\theta_{i}})_{0} \subset K_{i}\subset G_{\theta_{i}}$
for $i=1,2$.
The triple $(G, K_{1}, K_{2})$ is called a compact symmetric triad.
For $i=1,2$, 
$(G, K_{i})$ is a compact symmetric pair.
Thus, the coset manifold $G/K_{i}$ is a compact Riemannian symmetric space with respect to a $G$-invariant Riemannian metric.

The Lie group actions
\begin{itemize}
\item $K_{2} \curvearrowright G/K_{1}: k_{2}(gK_{1})=(k_{2}g)K_{1}\ (k_{2}\in K_{2}, g\in G)$
\item $K_{1} \curvearrowright G/K_{2}: k_{1}(gK_{2})=(k_{1}g)K_{2}\ (k_{1}\in K_{1}, g\in G)$
\end{itemize}
are called Hermann actions.

By definition, a Hermann action is an isometric action on a symmetric space,
and a generalization of the isotropy action of a compact symmetric space.
Actually, in the cases of  $K_{1}=K_{2}$, 
the Hermann action is the isotropy action of the symmetric space $G/K_{1}$.   
The Hermann action is a generalization of the isotropy action in a compact symmetric space.
In previous research,
the second fundamental form and geometrical properties of orbits of the isotropy actions in the compact symmetric spaces are being investigated. (\cite{HTST, Ve}, etc).
The Hermann action is known to be a hyperpolar action similar to the isotropy action of a compact symmetric space.

An isometric action of  a compact Lie group on a Riemannian manifold $M$ is called hyperpolar 
if there exists a closed, connected and flat submanifold $S$ of $M$ 
that meets all orbits orthogonally. 
Then the submanifold $S$ is called a section.

Korlloss(\cite{K1, K2}) shows that
an indecomposable hyperpolar action of cohomogeneity greater than one on a Riemannian symmetric space of compact type is orbit equivalent to a Hermann action.

Matsuki(\cite{M1, M2}) shows that the orbit space of Hermann actions are described by root systems.

In 2007, Goertsches and Thorbergsson (\cite{GT}) showed that the Shape operator and the curvature operator of the orbit of Hermann actions are commutative.
In the case of $ \theta_ {1} \theta_ {2} = \theta_ {2} \theta_ {1} $, the eigenvalues of the shape operator is determined.
Even in the case of $ \theta_ {1} \theta_ {2} \neq \theta_ {2} \theta_ {1} $, the eigenvalue of the shape operator of the orbits with several conditions is calculated.

In 2011, Ikawa(\cite{I1}) introduce the notion of symmetric triads with multiplicities.
In the cases of $\theta_{1}\theta_{2}=\theta_{2}\theta_{1}$, Ikawa gives a  characterization of minimal, austere and totally geodesic orbits of Hermann actions in terms of symmetric triads with multiplicities.  

On the assumption $\theta_{1}\theta_{2}=\theta_{2}\theta_{1}$, 
a sufficient condition for orbits of Hermann action to be weakly reflective and 
a characterization of biharmonic orbits of Hermann actions were given in \cite{Ohno1} and  \cite{OSU1, OSU2}. 

In particular, 
\cite{Ohno1} gives  
weakly reflective orbits of Hermann action which satisfy $\theta_{1}\theta_{2}\neq \theta_{2}\theta_{1}$.

In this paper, 
we investigate properties of orbits of Hermann actions as submanifolds without assuming the condition $\theta_{1}\theta_{2}=\theta_{2}\theta_{1}$.
From Matsuki's classification (\cite{M2}), we can see that $\theta_ {1} $ and $\theta_ {2} $ can be replaced so that $ \theta_ {1} \theta_ {2} $ has a finite order.
Therefore, in this paper, we assume that the order of $ \theta_ {1} \theta_ {2} $ is finite.

This article is organized as follows.
In Section~\ref{sect-preliminaries}, 
we prepare root systems which associated with compact symmetric triads.
In particular, in Lemma~\ref{onb}, we prepare orthonormal bases for the calculation of the second fundamental forms of orbits of Hermann actions. 
In Section~\ref{sect-orbitspace}, 
we describe the orbit spaces of the Hermann actions using the root system prepared in Section~\ref{sect-preliminaries}.
In Section~\ref{sect-2nd}, we compute the second fundamental form of orbits of Hermann action. 
Moreover, we give a characterization of minimal, austere and totally geodesic orbits of Hermann actions. 
In Section~\ref{sect-wr}, we give a sufficient condition for orbits of Hermann action to be weakly reflective (resp. arid) submanifolds. 
In Section~\ref{list}, 
we introduce the examples of Hermann actions whose involutions are not commutative.
In section~\ref{classify}, 
we classify austere orbits and weakly reflective orbits of Hermann actions which are introduced in Section~\ref{list}.

\section{Preliminaries}\label{sect-preliminaries}
Let $G$ be a compact,  semisimple, and connected Lie group, 
and $\theta_{1}, \theta_{2}$ be involutive automorphisms of $G$.
For $i=1, 2$, we take and fix a subgroup $K_{i}$ of $G$ which satisfies 
$(G_{\theta_{i}})_{0} \subset K_{i}\subset G_{\theta_{i}}$.
Then the triple $(G, K_{1}, K_{2})$ is a compact symmetric triad. 

We denote the Lie algebras of $G$, $K_{1}$ and $K_{2}$ by $\mathfrak{g}$, $\mathfrak{k}_{1}$ and $\mathfrak{k}_{2}$, respectively.
Fix an $\mathrm{Ad}(G)$-invariant inner product $\langle , \rangle$ on $\mathfrak{g}$.
Then the coset manifolds $G/K_{1}=:M_{1}$ and $K_{2}\backslash G=:M_{2}$ are  compact Riemannian symmetric spaces with respect to $G$-invariant Riemannian metric which are induced by $\langle , \rangle$. 
For $i=1, 2$, the involutive automorphism of  $\mathfrak{g}$ which is induced from $\theta_{i}$ will be also denoted by $\theta_{i}$.
Then, $\mathfrak{k}_{i}=\{X \in \mathfrak{g} \mid \theta_{i}(X)=X\}$ holds. 
Now, we have two orthogonal direct products of $\mathfrak{g}$:
\begin{align*}
\mathfrak{g}
=\mathfrak{k}_{1}\oplus \mathfrak{m}_{1}
=\mathfrak{k}_{2}\oplus \mathfrak{m}_{2}
\end{align*}
where 
$\mathfrak{m}_{i}=\{X \in \mathfrak{g} \mid \theta_{i}(X)=-X\}$.
For $i=1, 2$, we denote by $\pi_{i}$ the natural projection from $G$ onto $M_{i}$ 
(i.e. $\pi_{1}(g)=gK_{1} \in M_{1}, \ \pi_{2}(g)=K_{2}g \in M_{2}$ for $g \in G$).

Hereafter, we suppose  
$(\theta_{1}\theta_{2})^{l}=\mathrm{id}_{G}$ for some natural number $l$.
Since $\theta_{1}$ and $\theta_{2}$ are involutions, 
when $l=1$, $\theta_{1}=\theta_{2}$ and
when $l=2$, $\theta_{1}\theta_{2}=\theta_{2}\theta_{1}$ hold.
The eigenvalues of $\theta_{1}\theta_{2}$ belong to the unitary group $\mathrm{U}(1)$, since $\theta_{1}\theta_{2}$ is an automorphism of $\mathfrak{g}$.
We denote the complexification of $\mathfrak{g}$ by  $\mathfrak{g}^{\mathbb{C}}$.
Then we have the eigenspace decomposition of $\mathfrak{g}^{\mathbb{C}}$:
\begin{align*}
\mathfrak{g}^{\mathbb{C}}=\sum_{\varepsilon \in \mathrm{U}(1)} \mathfrak{g}_{\varepsilon}
\end{align*}
where 
$\mathfrak{g}_{\varepsilon}=\{ X \in \mathfrak{g}^{\mathbb{C}} \mid \theta_{1}\theta_{2} (X)=\varepsilon X\}$.
For $\varepsilon \in \mathrm{U}(1)$ satisfying $\mathfrak{g}_{\varepsilon}\neq \{0 \}$,  $\varepsilon^{l}=1$ holds.
For an eigenvalue $\varepsilon$ of $\theta_{1}\theta_{2}$, 
we define $\varphi_{\varepsilon} \in ( -\pi /2, \pi/2 ]$ by the equation $\varepsilon =e^{2\sqrt{-1} \varphi_{\varepsilon}}$.

We can see that 
\begin{align*}
\mathfrak{g}_{1}\cap \mathfrak{g}=\mathfrak{k}_{1}\cap\mathfrak{k}_{2}\oplus \mathfrak{m}_{1}\cap\mathfrak{m}_{2}.
\end{align*}
Moreover
$\mathfrak{g}_{1}\cap \mathfrak{g}$ is an orthogonal symmetric Lie algebra with respect to $\theta_{1}$ and $\theta_{2}$.
We remark that $\mathfrak{g}_{1}\cap \mathfrak{g}$ is not necessarily semisimple.

Take and fix a maximal abelian subspace $\mathfrak{a}$ of $\mathfrak{m}_{1}\cap\mathfrak{m}_{2}$.
We consider the adjoint representation of $\mathfrak{a}$ on $\mathfrak{g}^{\mathbb{C}}$.
For each $\alpha \in \mathfrak{a}$,
we set 
\begin{align*}
\mathfrak{g}(\alpha)=\{ X \in \mathfrak{g}^{\mathbb{C}} \mid [H, X] =\sqrt{-1} \langle \alpha, H\rangle X \quad (H \in \mathfrak{a}) \}
\end{align*}
and $\tilde{\Sigma}=\{ \alpha \in \mathfrak{a} \setminus \{0\} \mid \mathfrak{g}(\alpha) \neq \{0 \}\}$.
Then, we have a direct sum decomposition of $\mathfrak{g}^{\mathbb{C}}$:
\begin{align*}
\mathfrak{g}^{\mathbb{C}}=\mathfrak{g}(0) \oplus \sum_{\alpha \in \tilde{\Sigma} }\mathfrak{g}(\alpha). 
\end{align*}

For each $\alpha \in \tilde{\Sigma} \cup \{0\}$, 
$\theta_{1}\theta_{2}(\mathfrak{g}(\alpha)  )=\mathfrak{g}(\alpha)  $ holds. 
We set  
\begin{align*}
\mathfrak{g}(\alpha, \varepsilon)=\{ X \in \mathfrak{g}(\alpha) \mid \theta_{1}\theta_{2}(X) = \varepsilon X\}
\end{align*}
for $\alpha \in \tilde{\Sigma} \cup \{0\}$ and $\varepsilon \in \mathrm{U}(1)$.
Then we have 
\begin{align*}
\mathfrak{g}(\alpha)=\sum_{\varepsilon \in \mathrm{U}(1)} \mathfrak{g}(\alpha, \varepsilon).
\end{align*}
Let $\overline{X}$ denotes the complex conjugate of  $X \in \mathfrak{g}^{\mathbb{C}}$.
Since
$\overline{\mathfrak{g}(\alpha)}=\mathfrak{g}(-\alpha)$, 
if $\alpha \in \tilde{\Sigma} $, then $-\alpha \in \tilde{\Sigma} $ holds. 
Moreover, 
\cite{M1} shows the following lemma.  
\begin{lem}[\cite{M1}]\label{lem-M-bracket}
For $\alpha, \beta \in \mathfrak{a}, \ \varepsilon , \delta \in \mathrm{U}(1)$, 
\begin{enumerate}
\item $[\mathfrak{g}(\alpha) , \mathfrak{g}(\beta)] \subset \mathfrak{g}(\alpha+\beta)$
\item $\theta_{1} ( \mathfrak{g}(\alpha))= \mathfrak{g}(-\alpha) ,\ \theta_{2}(\mathfrak{g}(\alpha))= \mathfrak{g}(-\alpha)$
\item $\theta_{1}\theta_{2}(\mathfrak{g}(\alpha))= \mathfrak{g}(\alpha),\ \theta_{2}\theta_{1}(\mathfrak{g}(\alpha))= \mathfrak{g}(\alpha)$
\item $\theta_{1}(\mathfrak{g}(\alpha, \varepsilon))=\mathfrak{g}(-\alpha, \varepsilon^{-1})$,\ 
$\theta_{2}(\mathfrak{g}(\alpha, \varepsilon))=\mathfrak{g}(-\alpha, \varepsilon^{-1})$
\item $\overline{\mathfrak{g}(\alpha, \varepsilon)}=\mathfrak{g}(-\alpha, \varepsilon^{-1})$
\item $[\mathfrak{g}(\alpha, \varepsilon), \mathfrak{g}(\beta, \delta)]\subset \mathfrak{g}(\alpha+ \beta, \varepsilon \delta)$
\end{enumerate}
\end{lem}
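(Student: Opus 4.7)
The six assertions are all formal consequences of the definitions together with the facts that $\theta_1,\theta_2$ are $\mathbb{C}$-linearly extended involutive automorphisms of $\mathfrak{g}^{\mathbb{C}}$ and that $\mathfrak{a}\subset\mathfrak{m}_1\cap\mathfrak{m}_2$, so the plan is to prove them in order, each using the previous ones where convenient.

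For (1), I would take $X\in\mathfrak{g}(\alpha)$, $Y\in\mathfrak{g}(\beta)$, and an arbitrary $H\in\mathfrak{a}$, and apply the Jacobi identity to $[H,[X,Y]]$. The two resulting terms produce the scalar $\sqrt{-1}(\langle\alpha,H\rangle+\langle\beta,H\rangle)$, which is precisely the eigenvalue defining $\mathfrak{g}(\alpha+\beta)$. For (2), the key observation is that $\theta_i(H)=-H$ for every $H\in\mathfrak{a}$, because $\mathfrak{a}\subset\mathfrak{m}_i$. Using that $\theta_i$ is an automorphism, I would write $[H,\theta_i(X)]=\theta_i([\theta_i(H),X])=\theta_i([-H,X])=-\sqrt{-1}\langle\alpha,H\rangle\theta_i(X)$, which shows $\theta_i(X)\in\mathfrak{g}(-\alpha)$. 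Item (3) is immediate from (2): $\theta_1\theta_2(\mathfrak{g}(\alpha))=\theta_1(\mathfrak{g}(-\alpha))=\mathfrak{g}(\alpha)$, and similarly for $\theta_2\theta_1$.

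For (4), I would first note the identity $\theta_1(\theta_1\theta_2)\theta_1=\theta_2\theta_1=(\theta_1\theta_2)^{-1}$, i.e. conjugation by $\theta_1$ inverts $\theta_1\theta_2$; the same holds for $\theta_2$. Then for $X\in\mathfrak{g}(\alpha,\varepsilon)$ one computes
\begin{align*}
\theta_1\theta_2(\theta_1(X))=\theta_1\bigl((\theta_1\theta_2)^{-1}(X)\bigr)=\theta_1(\varepsilon^{-1}X)=\varepsilon^{-1}\theta_1(X),
\end{align*}
combining with (2) to give $\theta_1(X)\in\mathfrak{g}(-\alpha,\varepsilon^{-1})$; the argument for $\theta_2$ is symmetric. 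For (5), since $\theta_1\theta_2$ is real on $\mathfrak{g}$, its $\mathbb{C}$-linear extension commutes with complex conjugation, and $\overline{[H,X]}=[H,\overline{X}]$ for $H\in\mathfrak{a}\subset\mathfrak{g}$; combining these with $\overline{\varepsilon}=\varepsilon^{-1}$ (because $\varepsilon\in\mathrm{U}(1)$) yields the claim. Finally, (6) follows by combining (1) with the fact that $\theta_1\theta_2$ is an automorphism: if $X\in\mathfrak{g}(\alpha,\varepsilon)$ and $Y\in\mathfrak{g}(\beta,\delta)$, then $\theta_1\theta_2([X,Y])=[\varepsilon X,\delta Y]=\varepsilon\delta\,[X,Y]$, and $[X,Y]\in\mathfrak{g}(\alpha+\beta)$ by (1).

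The proof is essentially mechanical and contains no genuine obstacle; the only point that requires a little care is the conjugation identity used in (4), since $\theta_1$ and $\theta_2$ are not assumed to commute. Everything else is a direct unwinding of the definitions.
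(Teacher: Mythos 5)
Your proof is correct. Note, however, that the paper itself gives no proof of this lemma: it is stated as a citation to Matsuki \cite{M1}, so there is no in-text argument to compare against. Your direct verification from the definitions (Jacobi identity for (1) and (6), $\theta_i|_{\mathfrak{a}}=-\mathrm{id}$ for (2), the conjugation identity $\theta_i(\theta_1\theta_2)\theta_i=(\theta_1\theta_2)^{-1}$ for (4), and reality of $\theta_1\theta_2$ and of $\mathfrak{a}$ for (5)) is the standard one and is sound; the only cosmetic remark is that in (2) and (4) you literally establish the inclusions $\theta_i(\mathfrak{g}(\alpha))\subset\mathfrak{g}(-\alpha)$ and $\theta_i(\mathfrak{g}(\alpha,\varepsilon))\subset\mathfrak{g}(-\alpha,\varepsilon^{-1})$, with the stated equalities following at once by applying $\theta_i$ twice, which you implicitly use.
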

Moreover, \cite{I1} shows the following lemma. 
\begin{lem}[\cite{I1}]\label{lem-I-root}
The set $\tilde{\Sigma}$ is a root system of $\mathfrak{a}\cap \mathfrak{z}^{\perp}$,  
where 
$\mathfrak{z}$ is the center of $\mathfrak{g}$. 
\end{lem}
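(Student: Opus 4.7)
The plan is to identify $\tilde{\Sigma}$ with the set of nonzero restrictions of an ordinary root system to $\mathfrak{a}$, then verify the root-system axioms. First I would extend $\mathfrak{a}$ to a maximal abelian subalgebra $\mathfrak{t}$ of $\mathfrak{g}$; since $\mathfrak{g}$ is compact, $\mathfrak{t}$ is a Cartan subalgebra with root space decomposition $\mathfrak{g}^{\mathbb{C}} = \mathfrak{t}^{\mathbb{C}} \oplus \bigoplus_{\lambda \in \Delta}\mathfrak{g}_{\lambda}$, with every $\lambda \in \Delta$ taking purely imaginary values on $\mathfrak{t}$. Via $\langle , \rangle$, each $\lambda|_{\mathfrak{a}}$ corresponds to a unique $\hat{\lambda} \in \mathfrak{a}$ satisfying $\lambda(H) = \sqrt{-1}\langle \hat{\lambda}, H\rangle$ for $H \in \mathfrak{a}$. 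Matching $\mathrm{ad}(H)$-eigenvalues shows that $\mathfrak{g}(\alpha) = \bigoplus_{\hat{\lambda} = \alpha}\mathfrak{g}_{\lambda}$ for $\alpha \neq 0$, so $\tilde{\Sigma}$ is exactly $\{\hat{\lambda} : \lambda \in \Delta,\ \hat{\lambda} \neq 0\}$.

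The basic axioms follow quickly from this identification. Finiteness and $0 \notin \tilde{\Sigma}$ are immediate. For the spanning property, if $H \in \mathfrak{a}$ is $\langle,\rangle$-orthogonal to every element of $\tilde{\Sigma}$, then $\lambda(H) = 0$ for every $\lambda \in \Delta$, so $\mathrm{ad}(H)$ vanishes on $\mathfrak{g}^{\mathbb{C}}$ and hence $H \in \mathfrak{z}$. This shows $\tilde{\Sigma}$ spans exactly the orthogonal complement of $\mathfrak{z} \cap \mathfrak{a}$ in $\mathfrak{a}$, which is $\mathfrak{a} \cap \mathfrak{z}^{\perp}$ (and in the present semisimple setting coincides with $\mathfrak{a}$, but the statement is phrased so as to remain meaningful without that hypothesis).

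The real content is the Cartan-integer condition and $s_{\alpha}$-invariance. For each $\alpha \in \tilde{\Sigma}$ I would pick a nonzero $E_{\alpha} \in \mathfrak{g}(\alpha)$, set $F_{\alpha} := \overline{E_{\alpha}} \in \mathfrak{g}(-\alpha)$ using part (5) of Lemma~\ref{lem-M-bracket}, and use $\mathrm{Ad}$-invariance of $\langle , \rangle$ to compute that $\langle [E_{\alpha}, F_{\alpha}], H\rangle = \sqrt{-1}\langle \alpha, H\rangle \cdot \langle F_{\alpha}, E_{\alpha}\rangle$ for $H \in \mathfrak{a}$, where $\langle F_{\alpha}, E_{\alpha}\rangle > 0$. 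This pins the $\mathfrak{a}^{\mathbb{C}}$-component of $[E_{\alpha}, F_{\alpha}] \in \mathfrak{g}(0)$ to be a nonzero scalar multiple of $\sqrt{-1}\alpha$, so a rescaling produces an $\mathfrak{sl}_{2}$-triple $(E_{\alpha}, F_{\alpha}, H_{\alpha})$ with $H_{\alpha} \in \mathfrak{a}^{\mathbb{C}}$ and the usual commutation relations. Feeding the $\alpha$-string $\bigoplus_{n \in \mathbb{Z}}\mathfrak{g}(\beta + n\alpha)$ into the standard $\mathfrak{sl}_{2}$-representation theory then forces $2\langle\alpha,\beta\rangle/\langle\alpha,\alpha\rangle \in \mathbb{Z}$ and yields symmetry of the string about its midpoint, which is precisely $s_{\alpha}\tilde{\Sigma} = \tilde{\Sigma}$.

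The step I expect to be the hardest is producing the $\mathfrak{sl}_{2}$-triple with $H_{\alpha}$ inside $\mathfrak{a}^{\mathbb{C}}$: because $\mathfrak{a}$ is only maximal abelian in $\mathfrak{m}_{1} \cap \mathfrak{m}_{2}$ rather than in $\mathfrak{g}$, the zero-weight space $\mathfrak{g}(0)$ strictly contains $\mathfrak{a}^{\mathbb{C}}$, and \emph{a priori} $[E_{\alpha}, F_{\alpha}]$ might project trivially onto $\mathfrak{a}^{\mathbb{C}}$. The $\mathrm{Ad}$-invariance calculation above is exactly what rules this out, and once the $\mathfrak{sl}_{2}$-triple is in place the remaining verifications reduce to standard $\mathfrak{sl}_{2}$-theory.
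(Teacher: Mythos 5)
The paper does not actually prove this lemma---it is quoted from \cite{I1}---so there is no in-paper argument to compare against; your proposal has to stand on its own. Its overall architecture (identify $\tilde{\Sigma}$ with nonzero restrictions of the $\mathfrak{t}$-roots for finiteness and the spanning property, then prove integrality and $s_{\alpha}$-invariance by an $\mathfrak{sl}_{2}$-string argument internal to the $\mathfrak{g}(\alpha)$-decomposition) is the right one, and the spanning argument is fine. But there is a genuine gap at exactly the step you flagged as hardest, and the $\mathrm{Ad}$-invariance computation does \emph{not} close it. For an arbitrary nonzero $E_{\alpha}\in\mathfrak{g}(\alpha)$ with $F_{\alpha}=\overline{E_{\alpha}}$, the identity $\langle [E_{\alpha},F_{\alpha}],H\rangle=\sqrt{-1}\langle\alpha,H\rangle\langle F_{\alpha},E_{\alpha}\rangle$ only controls the orthogonal projection of $[E_{\alpha},F_{\alpha}]$ onto $\mathfrak{a}^{\mathbb{C}}$; it says nothing about the component in $\mathfrak{g}(0)\ominus\mathfrak{a}^{\mathbb{C}}$, and no rescaling removes that component. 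That component is genuinely nonzero in general: if two distinct $\mathfrak{t}$-roots $\lambda\neq\mu$ both restrict to $\alpha$ and $\lambda-\mu$ is itself a root (which happens whenever $m(\alpha)\geq 2$ in typical examples), then for $E_{\alpha}=E_{\lambda}+E_{\mu}$ the cross terms $[E_{\lambda},\overline{E_{\mu}}]+[E_{\mu},\overline{E_{\lambda}}]$ land in $\mathfrak{g}_{\lambda-\mu}\oplus\mathfrak{g}_{\mu-\lambda}\subset\mathfrak{g}(0)$ but are orthogonal to $\mathfrak{t}^{\mathbb{C}}\supset\mathfrak{a}^{\mathbb{C}}$. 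So $(E_{\alpha},F_{\alpha},[E_{\alpha},F_{\alpha}])$ is not an $\mathfrak{sl}_{2}$-triple with semisimple element in $\mathfrak{a}^{\mathbb{C}}$, and the weight-string argument does not launch.

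The repair is to choose $E_{\alpha}$ adapted to the involutions rather than arbitrarily: take $E_{\alpha}=X\in\mathfrak{g}(\alpha,\varepsilon)$ with $X+\overline{X}\in V(\alpha,\varepsilon)\cap\mathfrak{k}_{1}$, exactly as in the proof of Lemma~\ref{onb}. There one shows $[X,\overline{X}]$ is fixed by $\theta_{1}\theta_{2}$, reversed by $\theta_{1}$ and $\theta_{2}$, and centralizes $\mathfrak{a}$, so that maximality of $\mathfrak{a}$ in $\mathfrak{m}_{1}\cap\mathfrak{m}_{2}$ forces $[X,\overline{X}]=\tfrac{\sqrt{-1}}{2}\alpha\in\mathfrak{a}^{\mathbb{C}}$ (this is the real content behind $[X_{\alpha,i}^{\varepsilon},Y_{\alpha,i}^{\varepsilon}]=\alpha$). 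It is the interplay of the three ingredients---centralizing $\mathfrak{a}$, lying in $\mathfrak{m}_{1}^{\mathbb{C}}\cap\mathfrak{m}_{2}^{\mathbb{C}}$, and maximality of $\mathfrak{a}$ there---that puts the bracket into $\mathfrak{a}^{\mathbb{C}}$, not the nonvanishing of its $\mathfrak{a}^{\mathbb{C}}$-projection. With that substitution your normalization, the resulting $\mathfrak{sl}_{2}$-triple, and the string argument for $2\langle\alpha,\beta\rangle/\langle\alpha,\alpha\rangle\in\mathbb{Z}$ and $s_{\alpha}\tilde{\Sigma}=\tilde{\Sigma}$ all go through as you describe.
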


For each $\varepsilon \in \mathrm{U}(1)$, 
we set 
$\Sigma_{\varepsilon}=\{\alpha \in \mathfrak{a}\setminus \{ 0\} \mid \mathfrak{g}(\alpha, \varepsilon)\neq \{0\} \}$.
Then, from Lemma~\ref{lem-M-bracket}, 
we have  $-\alpha \in \Sigma_{\varepsilon^{-1}}$ for each $\alpha \in \Sigma_{\varepsilon}$.
Since $\mathfrak{g}(\alpha, \varepsilon)\subset \mathfrak{g}(\alpha)$, 
we can see that 
$\Sigma_{\varepsilon} \subset \tilde{\Sigma}$
and 
$\tilde{\Sigma} = \bigcup_{\varepsilon \in \mathrm{U}(1)} \Sigma_{\varepsilon}$.

We define 
\begin{align*}
m(\alpha, \varepsilon):=\dim_{\mathbb{C}} \mathfrak{g}(\alpha, \varepsilon), 
m(\alpha):=\sum_{\varepsilon \in \mathrm{U}(1)} m(\alpha, \varepsilon) =\dim_{\mathbb{C}} \mathfrak{g}(\alpha) 
\end{align*}
for $\alpha \in \tilde{\Sigma}, \varepsilon \in \mathrm{U}(1)$. 
Then,  we get 
$m(\alpha, \varepsilon)=m(-\alpha, \varepsilon^{-1}),\ m(\alpha)=m(-\alpha)$
from
Lemma~\ref{lem-M-bracket}. 

We take a fundamental system $\tilde{\Pi} =\{ \alpha_{1}, \ldots , \alpha_{r}\}$ of $\tilde{\Sigma}$.
We denote by $\tilde{\Sigma}^{+}$ the set of positive roots in $\tilde{\Sigma}^{+}$.
Set 
$\Sigma_{\varepsilon}^{+}=\Sigma_{\varepsilon}\cap \tilde{\Sigma}^{+}$
for $\varepsilon \in \mathrm{U}(1)$.
Then 
\begin{align}\label{poralization}
&\mathfrak{g}(\alpha, \varepsilon)\oplus \mathfrak{g}(-\alpha, \varepsilon^{-1})\nonumber \\
=&\{ X \in \mathfrak{g}_{\varepsilon}\oplus \mathfrak{g}_{\varepsilon^{-1}} 
\mid [H, [H, X]] =-\langle \alpha, H\rangle^{2} X \quad (H \in \mathfrak{a})\}\\
=&\{ X \in \mathfrak{g}_{\varepsilon}\oplus \mathfrak{g}_{\varepsilon^{-1}} 
\mid [H, [H', X]] =-\langle \alpha, H\rangle \langle \alpha, H'\rangle X \quad (H , H' \in \mathfrak{a})\}\nonumber
\end{align}
holds (see  \cite{I1}). 
Hence, 
\begin{align*}
&(\mathfrak{g}(\alpha, \varepsilon)\oplus \mathfrak{g}(-\alpha, \varepsilon^{-1})) \cap \mathfrak{g}\\
=&\{ X \in (\mathfrak{g}_{\varepsilon}\oplus \mathfrak{g}_{\varepsilon^{-1}})\cap \mathfrak{g} 
\mid [H, [H, X]] =-\langle \alpha, H\rangle^{2} X \quad (H \in \mathfrak{a})\}
\end{align*}
holds. 
We set 
$V(\alpha, \varepsilon)=(\mathfrak{g}(\alpha, \varepsilon)\oplus \mathfrak{g}(-\alpha, \varepsilon^{-1})) \cap \mathfrak{g}$.
Since 
$\dim V(\alpha, \varepsilon)=m(\alpha, \varepsilon)$, 
we have the following  direct sum decomposition:
\begin{align*}
\mathfrak{g}=(\mathfrak{g}(0)\cap \mathfrak{g})
\oplus \sum_{\varepsilon \in \mathrm{U}(1)} \sum_{\alpha \in \Sigma_{\varepsilon }^{+}} V(\alpha, \varepsilon).
\end{align*}
More precisely, 
when we set 
$V(0, \varepsilon)=(\mathfrak{g}(0, \varepsilon)\oplus \mathfrak{g}(0, \varepsilon^{-1})) \cap \mathfrak{g}$ for 
$\varepsilon \in \mathrm{U}(1)$, 
we have 
\begin{align*}
\mathfrak{g}(0)\cap \mathfrak{g}=V(0,1)\oplus V(0, -1)\oplus \sum_{\varepsilon \in \mathrm{U}(1) \atop \mathrm{Im}(\varepsilon) >0} V(0, \varepsilon).
\end{align*}

Since 
$\theta_{i}(V(\alpha, \varepsilon))=V(\alpha, \varepsilon)$
for $i=1, 2$, 
we have the direct sum decompositions
\begin{align*}
V(\alpha, \varepsilon)=(V(\alpha, \varepsilon)\cap \mathfrak{k}_{1})\oplus(V(\alpha, \varepsilon) \cap \mathfrak{m}_{1})
=(V(\alpha, \varepsilon)\cap \mathfrak{k}_{2})\oplus(V(\alpha, \varepsilon) \cap \mathfrak{m}_{2}).
\end{align*}
Then we have the following lemma.
\begin{lem}\label{onb}
For each $\varepsilon \in \mathrm{U}(1)$ and $\alpha \in \Sigma_{\varepsilon}^{+}$, 
there exist orthonormal bases 
$\{X_{\alpha, i}^{\varepsilon}\}_{i=1}^{m(\alpha, \varepsilon)}$ and  
$\{Y_{\alpha, i}^{\varepsilon}\}_{i=1}^{m(\alpha, \varepsilon)}$
of $V(\alpha, \varepsilon)\cap \mathfrak{k}_{1}$ and 
$V(\alpha, \varepsilon)\cap \mathfrak{m}_{1}$
respectively such that 
satisfy
the following two conditions;
\begin{enumerate}
\item[(1)] For any $H \in \mathfrak{a}$
\begin{align*}
[H, X_{\alpha, i}^{\varepsilon}] =\langle\alpha , H \rangle Y_{\alpha, i}^{\varepsilon},\ 
[H, Y_{\alpha, i}^{\varepsilon}] =-\langle\alpha , H \rangle X_{\alpha, i}^{\varepsilon},\ 
[X_{\alpha, i}^{\varepsilon}, Y_{\alpha, i}^{\varepsilon}]=\alpha ,
\end{align*}
\begin{align*}
\mathrm{Ad}(\exp H) X_{\alpha, i}^{\varepsilon}&= \cos \langle \alpha, H \rangle X_{\alpha, i}^{\varepsilon}+ \sin \langle \alpha, H\rangle Y_{\alpha, i}^{\varepsilon}, \\
\mathrm{Ad}(\exp H) Y_{\alpha, i}^{\varepsilon}&= -\sin \langle \alpha, H \rangle X_{\alpha, i}^{\varepsilon}+ \cos \langle \alpha, H\rangle Y_{\alpha, i}^{\varepsilon}
\end{align*}
holds.
\end{enumerate}
\item[(2)] For $H_{\varepsilon}\in \mathfrak{a}$ which satisfies $\langle \alpha ,H_{\varepsilon}\rangle= \varphi_{\varepsilon}$, 
we set 
$\widetilde{X}_{\alpha, i}^{\varepsilon}=\mathrm{Ad}(\exp H_{\varepsilon})^{-1}X_{\alpha, i}^{\varepsilon}$ and 
$\widetilde{Y}_{\alpha, i}^{\varepsilon}=\mathrm{Ad}(\exp H_{\varepsilon})^{-1}Y_{\alpha, i}^{\varepsilon}$. 
Then 
$\{ \widetilde{X}_{\alpha, i}^{\varepsilon} \}_{i=1}^{m(\alpha, \varepsilon)}$ and  
$\{ \widetilde{Y}_{\alpha, i}^{\varepsilon}\}_{i=1}^{m(\alpha, \varepsilon)}$
are orthonormal bases of $V(\alpha, \varepsilon)\cap \mathfrak{k}_{2}$ and 
$V(\alpha, \varepsilon)\cap \mathfrak{m}_{2}$
respectively, 
and 
for any $H \in \mathfrak{a}$
\begin{align*}
[H, \widetilde{X}_{\alpha, i}^{\varepsilon}] =\langle\alpha , H \rangle \widetilde{Y}_{\alpha, i}^{\varepsilon},\ 
[H, \widetilde{Y}_{\alpha, i}^{\varepsilon}] =-\langle\alpha , H \rangle \widetilde{X}_{\alpha, i}^{\varepsilon},\ 
[\widetilde{X}_{\alpha, i}^{\varepsilon}, \widetilde{Y}_{\alpha, i}^{\varepsilon}]=\alpha ,
\end{align*}
\begin{align*}
\mathrm{Ad}(\exp H) \widetilde{X}_{\alpha, i}^{\varepsilon}&= \cos \langle \alpha, H \rangle \widetilde{X}_{\alpha, i}^{\varepsilon}+ \sin \langle \alpha, H\rangle \widetilde{Y}_{\alpha, i}^{\varepsilon}, \\
\mathrm{Ad}(\exp H) \widetilde{Y}_{\alpha, i}^{\varepsilon}&= -\sin \langle \alpha, H \rangle \widetilde{X}_{\alpha, i}^{\varepsilon}+ \cos \langle \alpha, H\rangle \widetilde{Y}_{\alpha, i}^{\varepsilon}
\end{align*}
holds.
\end{lem}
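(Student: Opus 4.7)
The plan is to construct the bases of part~(1) from a complex structure on $V(\alpha,\varepsilon)$ induced by $\mathrm{ad}(\mathfrak{a})$, and then, for part~(2), to transport them via $\mathrm{Ad}(\exp H_\varepsilon)^{-1}$ after identifying $\theta_2$ with a conjugate of $\theta_1$ on $\mathfrak{g}(\alpha,\varepsilon)\oplus\mathfrak{g}(-\alpha,\varepsilon^{-1})$.

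For part~(1), the polarization identity~\eqref{poralization} yields $\mathrm{ad}(H)^2 = -\langle\alpha,H\rangle^2\,\mathrm{id}$ on $V(\alpha,\varepsilon)$ for every $H\in\mathfrak{a}$. Set $H_0 := \alpha/\langle\alpha,\alpha\rangle$, so that $\langle\alpha,H_0\rangle = 1$; then $J := \mathrm{ad}(H_0)$ is a complex structure on $V(\alpha,\varepsilon)$. The $\mathrm{Ad}$-invariance of $\langle,\rangle$ makes $J$ skew-adjoint, hence a linear isometry, and $\theta_1 H_0 = -H_0$ gives $J\theta_1 = -\theta_1 J$, so $J$ interchanges $V(\alpha,\varepsilon)\cap\mathfrak{k}_1$ and $V(\alpha,\varepsilon)\cap\mathfrak{m}_1$ isometrically. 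Starting from any orthonormal basis $\{X_{\alpha,i}^\varepsilon\}$ of the former, I would define $Y_{\alpha,i}^\varepsilon := JX_{\alpha,i}^\varepsilon$. To obtain $[H,X_{\alpha,i}^\varepsilon] = \langle\alpha,H\rangle Y_{\alpha,i}^\varepsilon$, decompose $H = \langle\alpha,H\rangle H_0 + H^\perp$ with $H^\perp\perp\alpha$; then $\mathrm{ad}(H^\perp)$ is skew-adjoint with square zero on $V(\alpha,\varepsilon)$ and hence vanishes there. The companion identity follows from $J^2 = -\mathrm{id}$, and the $\mathrm{Ad}(\exp H)$-rotation formulas come from exponentiating.

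For the identity $[X_{\alpha,i}^\varepsilon,Y_{\alpha,i}^\varepsilon] = \alpha$, I would write $X_{\alpha,i}^\varepsilon = Z + \overline{Z}$ with $Z\in\mathfrak{g}(\alpha,\varepsilon)$, so that $Y_{\alpha,i}^\varepsilon = \sqrt{-1}(Z-\overline{Z})$ and $[X_{\alpha,i}^\varepsilon,Y_{\alpha,i}^\varepsilon] = -2\sqrt{-1}[Z,\overline{Z}]$. Lemma~\ref{lem-M-bracket}(6) then places the bracket in $\mathfrak{g}(0,1)$, hence in $V(0,1)$. Combined with $[X_{\alpha,i}^\varepsilon,Y_{\alpha,i}^\varepsilon]\in\mathfrak{m}_1$ and the decomposition $V(0,1)\subset(\mathfrak{k}_1\cap\mathfrak{k}_2)\oplus(\mathfrak{m}_1\cap\mathfrak{m}_2)$ recorded in the preliminaries, the bracket lies in $\mathfrak{m}_1\cap\mathfrak{m}_2\cap\mathfrak{g}(0) = \mathfrak{a}$, the last equality following from the maximal abelianness of $\mathfrak{a}$ in $\mathfrak{m}_1\cap\mathfrak{m}_2$. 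Invariance of $\langle,\rangle$ then gives $\langle[X_{\alpha,i}^\varepsilon,Y_{\alpha,i}^\varepsilon],H\rangle = \langle\alpha,H\rangle$ for all $H\in\mathfrak{a}$, so the bracket equals $\alpha$.

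For part~(2), the core calculation is $\mathrm{Ad}(\exp H_\varepsilon)^{-1}\theta_1\mathrm{Ad}(\exp H_\varepsilon) = \theta_2$ on $\mathfrak{g}(\alpha,\varepsilon)\oplus\mathfrak{g}(-\alpha,\varepsilon^{-1})$, a short consequence of $\mathrm{Ad}(\exp H_\varepsilon) = e^{\sqrt{-1}\varphi_\varepsilon}\mathrm{id}$ on $\mathfrak{g}(\alpha,\varepsilon)$ together with $\theta_1\theta_2 = e^{2\sqrt{-1}\varphi_\varepsilon}\mathrm{id}$ there. From this, $\mathrm{Ad}(\exp H_\varepsilon)^{-1}$ sends $V(\alpha,\varepsilon)\cap\mathfrak{k}_1$ to $V(\alpha,\varepsilon)\cap\mathfrak{k}_2$ and $V(\alpha,\varepsilon)\cap\mathfrak{m}_1$ to $V(\alpha,\varepsilon)\cap\mathfrak{m}_2$; because it is an orthogonal automorphism of $\mathfrak{g}$ that commutes with $\mathrm{ad}(H)$ for every $H\in\mathfrak{a}$, all bracket and $\mathrm{Ad}(\exp H)$-identities from part~(1) transport verbatim to $\widetilde{X}_{\alpha,i}^\varepsilon,\widetilde{Y}_{\alpha,i}^\varepsilon$. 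The step I expect to be the main obstacle is the equality $[X_{\alpha,i}^\varepsilon,Y_{\alpha,i}^\varepsilon] = \alpha$: a naive argument only places the bracket in $\mathfrak{z}_{\mathfrak{m}_1}(\mathfrak{a})$, which can strictly contain $\mathfrak{a}$ since $\mathfrak{a}$ is only maximal abelian in $\mathfrak{m}_1\cap\mathfrak{m}_2$, so the $\theta_1\theta_2$-refinement of Lemma~\ref{lem-M-bracket}(6) is essential to land inside the smaller space $\mathfrak{m}_1\cap\mathfrak{m}_2$.
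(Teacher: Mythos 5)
Your proposal is correct and follows essentially the same route as the paper: the same operator $\mathrm{ad}(\alpha/\langle\alpha,\alpha\rangle)$ produces $Y_{\alpha,i}^{\varepsilon}$ from $X_{\alpha,i}^{\varepsilon}$, the polarization identity gives the bracket relations, the bracket $[X_{\alpha,i}^{\varepsilon},Y_{\alpha,i}^{\varepsilon}]$ is forced into $\mathfrak{m}_{1}\cap\mathfrak{m}_{2}$ and then into $\mathfrak{a}$ by maximal abelianness, and part~(2) is obtained by transporting under the orthogonal automorphism $\mathrm{Ad}(\exp H_{\varepsilon})^{-1}$. Your two repackagings --- deducing $[Z,\overline{Z}]\in\mathfrak{g}(0,1)$ from Lemma~\ref{lem-M-bracket}(6) rather than computing the $\theta_{1}\theta_{2}$-invariance directly, and phrasing part~(2) as the conjugation identity $\mathrm{Ad}(\exp H_{\varepsilon})^{-1}\theta_{1}\mathrm{Ad}(\exp H_{\varepsilon})=\theta_{2}$ on $\mathfrak{g}(\alpha,\varepsilon)\oplus\mathfrak{g}(-\alpha,\varepsilon^{-1})$ --- are valid condensations of the paper's explicit eigenvalue computations, not a different argument.
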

\begin{proof}
For $(1)$, 
let $\{X_{\alpha, i}^{\varepsilon}\}_{i=1}^{m(\alpha, \varepsilon)}$ be an 
orthonormal basis of $V(\alpha, \varepsilon)\cap \mathfrak{k}_{1}$.
We set 
\begin{align*}
Y_{\alpha, i}^{\varepsilon}=\left[ \frac{\alpha}{\langle \alpha, \alpha \rangle }, X_{\alpha, i}^{\varepsilon} \right]
\end{align*}
for $1 \leq i\leq m(\alpha, \varepsilon)$.
Then we have 
$Y_{\alpha, i}^{\varepsilon}\in V(\alpha, \varepsilon)\cap \mathfrak{m}_{1}$
and
\begin{align*}
\langle Y_{\alpha, i}^{\varepsilon}, Y_{\alpha, j}^{\varepsilon}\rangle 
=&\left\langle \left[ \frac{\alpha}{\langle \alpha, \alpha \rangle }, X_{\alpha, i}^{\varepsilon} \right], \left[ \frac{\alpha}{\langle \alpha, \alpha \rangle }, X_{\alpha, j}^{\varepsilon} \right] \right\rangle \\
=&-\left\langle  X_{\alpha, i}^{\varepsilon} , \frac{1}{\langle \alpha, \alpha \rangle^{2} } [\alpha , [\alpha , X_{\alpha, j}^{\varepsilon} ]] \right\rangle \\
=&\langle  X_{\alpha, i}^{\varepsilon} ,  X_{\alpha, j}^{\varepsilon} \rangle 
=\delta_{i,j}
\end{align*}
for $1\leq i, j \leq m(\alpha,\varepsilon)$.
Thus 
$\{Y_{\alpha, i}^{\varepsilon}\}_{i=1}^{m(\alpha, \varepsilon)}$ is an 
orthonormal basis of $V(\alpha, \varepsilon)\cap \mathfrak{m}_{1}$.
From equation (\ref{poralization})
we have 
\begin{align*}
[H, Y_{\alpha, i}^{\varepsilon}]=\left[ H , \left[ \frac{\alpha }{\langle \alpha, \alpha \rangle }, X_{\alpha, i}^{\varepsilon} \right] \right]= -\langle \alpha, H\rangle X_{\alpha, i}^{\varepsilon}
\end{align*}
for $H\in \mathfrak{a}$.
Thus $X_{\alpha, i}^{\varepsilon}= -\left[ \frac{\alpha }{\langle \alpha, \alpha \rangle } ,Y_{\alpha, i}^{\varepsilon} \right]$ 
holds. 
Hence, we also have 
\begin{align*}
[H, X_{\alpha, i}^{\varepsilon}]=-\left[H , \left[ \frac{\alpha }{\langle \alpha, \alpha \rangle }, Y_{\alpha, i}^{\varepsilon }\right]\right]=-\langle \alpha, H  \rangle Y_{\alpha, i}^{\varepsilon} 
\end{align*}
for $H\in \mathfrak{a}$.
Since 
$V(\alpha, \varepsilon)= (\mathfrak{g}(\alpha, \varepsilon)\oplus \mathfrak{g}(-\alpha, \varepsilon^{-1}))\cap \mathfrak{g}$, 
and $\overline{\mathfrak{g}(\alpha, \varepsilon)}=\mathfrak{g}(-\alpha, \varepsilon^{-1})$, 
there exists $X\in \mathfrak{g}(\alpha, \varepsilon)$, such that $X_{\alpha, i}^{\varepsilon} =X+\overline{X}$.
Then 
\begin{align*}
[X_{\alpha, i}^{\varepsilon}, Y_{\alpha, i}^{\varepsilon}]
=&\left[ (X+\overline{X}) \left[ \frac{\alpha }{\langle \alpha, \alpha \rangle },(X+\overline{X}) \right]\right]\\
=&[X+\overline{X}, \sqrt{-1} X-\sqrt{-1}\ \overline{X}]
=-2\sqrt{-1}[X, \overline{X}].
\end{align*}
Therefore, 
\begin{align*}
\theta_{1}\theta_{2}([X_{\alpha, i}^{\varepsilon}, Y_{\alpha, i}^{\varepsilon}])
=&\theta_{1}\theta_{2}(-2\sqrt{-1}[X, \overline{X}])
=-2\sqrt{-1}[\varepsilon X, \varepsilon^{-1} \overline{X}]\\
=&-2\sqrt{-1}[X, \overline{X}]
=[X_{\alpha, i}^{\varepsilon}, Y_{\alpha, i}^{\varepsilon}].
\end{align*}
Obviously, 
\begin{align*}
\theta_{1}[X_{\alpha, i}^{\varepsilon}, Y_{\alpha, i}^{\varepsilon}]=-[X_{\alpha, i}^{\varepsilon}, Y_{\alpha, i}^{\varepsilon}]
\end{align*}
holds.
Thus, 
$\theta_{2}([X_{\alpha, i}^{\varepsilon}, Y_{\alpha, i}^{\varepsilon}])
=-\theta_{2}\theta_{1}([X_{\alpha, i}^{\varepsilon}, Y_{\alpha, i}^{\varepsilon}])
=-[X_{\alpha, i}^{\varepsilon}, Y_{\alpha, i}^{\varepsilon}].
$
Hence, we get 
$[X_{\alpha, i}^{\varepsilon}, Y_{\alpha, i}^{\varepsilon}] \in \mathfrak{m}_{1}\cap \mathfrak{m}_{2}$.
Furter, since
\begin{align*}
[H, [X_{\alpha, i}^{\varepsilon}, Y_{\alpha, i}^{\varepsilon}]]
=&-[X_{\alpha, i}^{\varepsilon}, [Y_{\alpha, i}^{\varepsilon}, H]]- [Y_{\alpha, i}^{\varepsilon}, [H, X_{\alpha, i}^{\varepsilon}]]\\
=&-\langle \alpha, H \rangle [X_{\alpha, i}^{\varepsilon}, X_{\alpha, i}^{\varepsilon}] 
-\langle \alpha, H \rangle [Y_{\alpha, i}^{\varepsilon}, Y_{\alpha, i}^{\varepsilon}]=0, 
\end{align*}
 we have $[X_{\alpha, i}^{\varepsilon}, Y_{\alpha, i}^{\varepsilon}] \in \mathfrak{a}$.
Additionally, for $H \in \mathfrak{a}$, 
\begin{align*}
\langle H, [X_{\alpha, i}^{\varepsilon}, Y_{\alpha, i}^{\varepsilon}]\rangle
=\langle [H, X_{\alpha, i}^{\varepsilon}], Y_{\alpha, i}^{\varepsilon}\rangle 
=\langle \alpha, H\rangle \langle Y_{\alpha, i}^{\varepsilon}, Y_{\alpha, i}^{\varepsilon}\rangle 
=\langle \alpha, H\rangle
\end{align*}
holds.
Therefore, we obtain
$[X_{\alpha, i}^{\varepsilon}, Y_{\alpha, i}^{\varepsilon}]=\alpha$.

For (2), 
since $\alpha \neq 0$, 
There exists $H_{\varepsilon} \in \mathfrak{a}$ such that $\langle \alpha , H_{\varepsilon}\rangle =\varphi_{\varepsilon}$.
Then 
\begin{align*}
\widetilde{X}_{\alpha, i}^{\varepsilon}
=&\mathrm{Ad}(\exp H_{\varepsilon})^{-1} X_{\alpha, i}^{\varepsilon}=\cos (-\varphi_{\varepsilon}) X_{\alpha, i}^{\varepsilon}+ \sin (-\varphi_{\varepsilon}) Y_{\alpha, i}^{\varepsilon}\\
\widetilde{Y}_{\alpha, i}^{\varepsilon}
=&\mathrm{Ad}(\exp H_{\varepsilon})^{-1} Y_{\alpha, i}^{\varepsilon}=-\sin (-\varphi_{\varepsilon}) X_{\alpha, i}^{\varepsilon}+ \cos (-\varphi_{\varepsilon}) Y_{\alpha, i}^{\varepsilon}
\end{align*}
holds.
Hencce, 
$\widetilde{X}_{\alpha, i}^{\varepsilon}$ and $\widetilde{Y}_{\alpha, i}^{\varepsilon}$ do not depend on $H_{\varepsilon}$.
There exists $X\in \mathfrak{g}(\alpha, \varepsilon)$ such that 
$X_{\alpha, i}^{\varepsilon}=X+\overline{X}$. 
Then 
\begin{align*}
\theta_{2}(\widetilde{X}_{\alpha, i}^{\varepsilon})
=&\theta_{2}(\mathrm{Ad}(\exp H_{\varepsilon})^{-1} X_{\alpha, i}^{\varepsilon})
=\mathrm{Ad}(\exp H_{\varepsilon}) \theta_{2}(X_{\alpha, i}^{\varepsilon})\\
=&\mathrm{Ad}(\exp H_{\varepsilon}) \theta_{2}\theta_{1}(X_{\alpha, i}^{\varepsilon})
=\mathrm{Ad}(\exp H_{\varepsilon}) \theta_{2}\theta_{1}(X+\overline{X})\\
=&\mathrm{Ad}(\exp H_{\varepsilon}) (\varepsilon^{-1} X+\varepsilon \overline{X})
=\mathrm{Ad}(\exp H_{\varepsilon})^{-1}\mathrm{Ad}(\exp H_{\varepsilon})^{2} (\varepsilon^{-1} X+\varepsilon \overline{X})\\
=&\mathrm{Ad}(\exp H_{\varepsilon})^{-1} (e^{2\sqrt{-1}\varphi_{\varepsilon}}\varepsilon^{-1} X+e^{-2\sqrt{-1}\varphi_{\varepsilon}} \varepsilon \overline{X})
=\mathrm{Ad}(\exp H_{\varepsilon})^{-1} ( X+ \overline{X})\\
=&\widetilde{X}_{\alpha, i}^{\varepsilon}
\end{align*}
Thus $\widetilde{X}_{\alpha, i}^{\varepsilon} \in V(\alpha, \varepsilon) \cap \mathfrak{k}_{2}$.
\begin{align*}
\theta_{2}(\widetilde{Y}_{\alpha, i}^{\varepsilon})
=&\theta_{2}(\mathrm{Ad}(\exp H_{\varepsilon})^{-1} Y_{\alpha, i}^{\varepsilon})\\
=&\theta_{2}\left( \mathrm{Ad}\left( \exp H_{\varepsilon} \right)^{-1} \left[\frac{\alpha}{\langle \alpha, \alpha \rangle }, X_{\alpha, i}^{\varepsilon}\right] \right)
=\theta_{2}\left( \left[\frac{\alpha}{\langle \alpha, \alpha \rangle }, \widetilde{X}_{\alpha, i}^{\varepsilon} \right] \right)\\
=&-\left[\frac{\alpha}{\langle \alpha, \alpha \rangle }, \widetilde{X}_{\alpha, i}^{\varepsilon} \right] 
=-\widetilde{Y}_{\alpha, i}^{\varepsilon}.
\end{align*}
Thus, $\widetilde{Y}_{\alpha, i}^{\varepsilon} \in V(\alpha, \varepsilon) \cap \mathfrak{m}_{2}$.
Since $\mathrm{Ad}(\exp H_{\varepsilon})^{-1}$ is an orthogonal transformation and an automorphism on $\mathfrak{g}$, 
$\{ \widetilde{X}_{\alpha, i}^{\varepsilon} \}_{i=1}^{m(\alpha, \varepsilon)}$ and  
$\{ \widetilde{Y}_{\alpha, i}^{\varepsilon}\}_{i=1}^{m(\alpha, \varepsilon)}$
of $V(\alpha, \varepsilon)\cap \mathfrak{k}_{2}$ and 
$V(\alpha, \varepsilon)\cap \mathfrak{m}_{2}$
respectively.
\begin{align*}
[H, \widetilde{X}_{\alpha, i}^{\varepsilon}] =\langle\alpha , H \rangle \widetilde{Y}_{\alpha, i}^{\varepsilon},\ 
[H, \widetilde{Y}_{\alpha, i}^{\varepsilon}] =-\langle\alpha , H \rangle \widetilde{X}_{\alpha, i}^{\varepsilon},\ 
[\widetilde{X}_{\alpha, i}^{\varepsilon}, \widetilde{Y}_{\alpha, i}^{\varepsilon}]=\alpha ,
\end{align*}
\begin{align*}
\mathrm{Ad}(\exp H) \widetilde{X}_{\alpha, i}^{\varepsilon}&= \cos \langle \alpha, H \rangle \widetilde{X}_{\alpha, i}^{\varepsilon}+ \sin \langle \alpha, H\rangle \widetilde{Y}_{\alpha, i}^{\varepsilon}, \\
\mathrm{Ad}(\exp H) \widetilde{Y}_{\alpha, i}^{\varepsilon}&= -\sin \langle \alpha, H \rangle \widetilde{X}_{\alpha, i}^{\varepsilon}+ \cos \langle \alpha, H\rangle \widetilde{Y}_{\alpha, i}^{\varepsilon}
\end{align*}
holds for all $H \in \mathfrak{a}$.
\end{proof}
\begin{rem}\rm
\begin{itemize}
\item Lemma \ref{onb} is a generalization of Lemma 4.16 (2)  in \cite{I1}. 
\item When $G$ is simple and $(\theta_{1}\theta_{2})^{2}=\mathrm{id}_{G}$, 
if $\Sigma_{1}\cap \Sigma_{-1}\neq \emptyset$ holds, 
then $(\tilde{\Sigma}, \Sigma_{1}, \Sigma_{-1})$ is a symmetric triad which is intoduced in \cite{I1}. 
\end{itemize}
\end{rem}

\section{Orbit space}\label{sect-orbitspace}
In this section, 
we describe orbit spaces of Hermann actions.  

In order to consider the orbit space of the action of $K_{2}$ on $M_{1}=G/K_{1}$, 
we define an equivalence relation $\sim$ on $G$.
For $g_{1}, g_{2} \in G$, 
\begin{align*}
g_{1}\sim g_{2} \iff K_{2}\pi_{1}(g_{1})=K_{2}\pi_{1}(g_{2}).
\end{align*}
Then $G/\sim$ is the  orbit space of the action of $K_{2}$ on $M_{1}$, 
and we can identify $G/\sim$ with double coset $K_{2}\backslash G /K_{1}$.

We define a group $\tilde{J}$ by 
\begin{align*}
\tilde{J}=
\{ ([s], Y) \in \mathrm{N}_{K_{2}}(\mathfrak{a}) / \mathrm{Z}_{ K_{1}\cap K_{2}} (\mathfrak{a}) \ltimes \mathfrak{a} \mid \exp(-y)s \in K_{1} \}
\end{align*}
where we set 
\begin{align*}
\mathrm{N}_{L}(\mathfrak{a})=&\{ s\in L \mid \mathrm{Ad}(s) \mathfrak{a}=\mathfrak{a}\},\\
\mathrm{Z}_{L}(\mathfrak{a})=&\{ s\in L \mid \mathrm{Ad}(s)|_{\mathfrak{a}}=\mathrm{id}_{\mathfrak{a}}\}
\end{align*}
for a subgroup $L$ of $G$.
The group $\tilde{J}$ acts on  $\mathfrak{a}$ by  the following:
\begin{align*}
([s], Y)\cdot H =\mathrm{Ad}(s)H+Y\quad (([s], Y) \in \tilde{J} ,\ H \in \mathfrak{a}).
\end{align*}
It is known that the following proposition.
\begin{prop}[\cite{M1}]
\begin{align*}
K_{2}\backslash G /K_{1}\cong \mathfrak{a}/\tilde{J}
\end{align*}
\end{prop}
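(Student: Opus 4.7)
The plan is to show that the map $\Phi : \mathfrak{a} \to K_{2}\backslash G / K_{1}$, $H \mapsto K_{2}\exp(H)K_{1}$, descends to a bijection $\bar{\Phi} : \mathfrak{a}/\tilde{J} \to K_{2}\backslash G / K_{1}$. I would treat well-definedness on $\mathfrak{a}/\tilde{J}$, surjectivity, and injectivity in turn.

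Well-definedness I would dispatch by direct computation. Given $([s],Y)\in\tilde{J}$ and $H\in\mathfrak{a}$, using $\mathrm{Ad}(s)\mathfrak{a}=\mathfrak{a}$ and the commutativity of $\mathfrak{a}$,
\begin{align*}
\exp(\mathrm{Ad}(s)H+Y)=\exp(\mathrm{Ad}(s)H)\exp(Y)=s\exp(H)s^{-1}\exp(Y).
\end{align*}
The factor $s\in K_{2}$ is absorbed on the left, while $s^{-1}\exp(Y)=(\exp(-Y)s)^{-1}\in K_{1}$ by the defining condition of $\tilde{J}$ is absorbed on the right. Hence $\Phi(\mathrm{Ad}(s)H+Y)=\Phi(H)$, so $\Phi$ factors through $\mathfrak{a}/\tilde{J}$.

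For surjectivity I would use the hyperpolarity of the Hermann action to show $\exp(\mathfrak{a})$ meets every $(K_{2},K_{1})$-double coset. Fix $g\in G$ and let $X_{*}\in\mathfrak{g}$ minimize $\langle X,X\rangle$ subject to $\exp(X)\in K_{2}gK_{1}$. Considering variations $t\mapsto\exp(tY_{2})\exp(X_{*})\exp(-tY_{1})$ for $Y_{i}\in\mathfrak{k}_{i}$ and using right translation to identify $T_{\exp X_{*}}G\cong\mathfrak{g}$, the first-variation condition yields $X_{*}\perp\mathfrak{k}_{2}$ and $\mathrm{Ad}(\exp(-X_{*}))X_{*}\perp\mathfrak{k}_{1}$. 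Since $\mathrm{Ad}(\exp(-X_{*}))X_{*}=X_{*}$, this gives $X_{*}\in\mathfrak{m}_{1}\cap\mathfrak{m}_{2}$. Because $\mathfrak{a}$ is maximal abelian in $\mathfrak{m}_{1}\cap\mathfrak{m}_{2}$, the conjugation theorem applied to the $\mathrm{Ad}(K_{1}\cap K_{2})$-action produces $k\in K_{1}\cap K_{2}$ with $\mathrm{Ad}(k)X_{*}\in\mathfrak{a}$; since $k\in K_{2}$ and $k^{-1}\in K_{1}$, we conclude $\exp(\mathrm{Ad}(k)X_{*})=k\exp(X_{*})k^{-1}\in\exp(\mathfrak{a})\cap K_{2}gK_{1}$.

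The main obstacle is injectivity. Assuming $\exp(H_{2})=k_{2}\exp(H_{1})k_{1}$ for some $k_{i}\in K_{i}$, I must produce $([s],Y)\in\tilde{J}$ with $H_{2}=\mathrm{Ad}(s)H_{1}+Y$. The strategy is a reduction to the case $k_{2}\in N_{K_{2}}(\mathfrak{a})$, after which the construction is immediate: set $s=k_{2}$ and $Y=H_{2}-\mathrm{Ad}(s)H_{1}\in\mathfrak{a}$; using $k_{2}\exp(H_{1})=\exp(\mathrm{Ad}(k_{2})H_{1})k_{2}$ together with the commutativity in $\mathfrak{a}$, the hypothesis rewrites as $\exp(Y)=k_{2}k_{1}$, hence $\exp(-Y)s=k_{1}^{-1}\in K_{1}$, verifying $([s],Y)\in\tilde{J}$ and $\mathrm{Ad}(s)H_{1}+Y=H_{2}$. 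The delicate step is the reduction itself: one must show that any representing pair $(k_{1},k_{2})$ can be modified, within the quotient by $Z_{K_{1}\cap K_{2}}(\mathfrak{a})$, to one with $k_{2}\in N_{K_{2}}(\mathfrak{a})$, compensating on the $K_{1}$-side. This relies on the slice-type analysis in \cite{M1} together with the conjugation theorem for maximal abelian subspaces of $\mathfrak{m}_{1}\cap\mathfrak{m}_{2}$, and it is where the bulk of the technical work lies.
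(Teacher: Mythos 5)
The paper offers no proof of this proposition at all: it is quoted verbatim from Matsuki \cite{M1} (Section 3 there), and the text simply says ``It is known that the following proposition.'' So there is no in-paper argument to compare yours against; the relevant benchmark is whether your outline actually constitutes a proof. It does not, and the gap is exactly where you say it is. Your well-definedness computation is complete and correct (the identity $\exp(\mathrm{Ad}(s)H+Y)=s\exp(H)s^{-1}\exp(Y)$ together with $s\in K_{2}$ and $s^{-1}\exp(Y)=(\exp(-Y)s)^{-1}\in K_{1}$ is precisely the forward implication). But injectivity --- that $\exp(H_{1})$ and $\exp(H_{2})$ lying in the same double coset forces $H_{2}\in\tilde{J}\cdot H_{1}$ --- is the entire content of Matsuki's result, and your ``reduction to $k_{2}\in N_{K_{2}}(\mathfrak{a})$'' is not a lemma you prove but a restatement of that content. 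It is not automatic that a representing pair $(k_{1},k_{2})$ can be modified so that $k_{2}$ normalizes $\mathfrak{a}$; establishing this requires the slice/polar analysis (the isotropy group of $\pi_{1}(\exp H_{1})$ acting on the normal space, with $\mathfrak{a}$ as a section, as in equation (\ref{eq-slicerep}) of the paper) and is where Matsuki's argument actually lives. Deferring it to \cite{M1} means the hardest third of the proposition is cited, not proved.

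A secondary, patchable issue: in the surjectivity argument you differentiate $\|X(t)\|^{2}$ along a lift of the curve $t\mapsto\exp(tY_{2})\exp(X_{*})\exp(-tY_{1})$ through $\exp$, but $\exp$ need not be a local diffeomorphism at $X_{*}$, so the lift $X(t)$ with $X(0)=X_{*}$ may not exist. The standard fix is to run the minimization on $M_{1}=G/K_{1}$ with the Riemannian distance from $\pi_{1}(e)$ to the orbit $K_{2}\pi_{1}(g)$ and use that a minimal geodesic meets the orbit orthogonally; the conclusion $X_{*}\in\mathfrak{m}_{1}\cap\mathfrak{m}_{2}$ followed by conjugation into $\mathfrak{a}$ by $K_{1}\cap K_{2}$ then goes through as you describe, and this part I would count as essentially correct.
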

From this proposition, 
we can see that in order to describe $K_{2}\backslash G /K_{1}$, 
we need to examine the structure of $\tilde{J}$. 

For $\alpha \in \tilde{\Sigma}$, we set 
\begin{align*}
s_{\alpha}(H)=H-2\frac{\langle \alpha, H\rangle }{\langle \alpha, \alpha \rangle}\alpha \quad (H\in \mathfrak{a}).
\end{align*}
Then $s_{\alpha}$ is the reflection in $\mathfrak{a}$ with respect to the hyperplane 
$\{H \in \mathfrak{a} \mid \langle \alpha, H\rangle=0 \}$. 
The next lemma shows relationship between $\tilde{\Sigma}$ and $\tilde{J}$.

\begin{lem}\label{J-tilde}
For $\varepsilon \in \mathrm{U}(1),\  \alpha \in \Sigma_{\varepsilon }^{+},\  n\in \mathbb{Z}$, 
\begin{align*}
\left( s_{\alpha}, 2\frac{n\pi -\varphi_{\varepsilon}}{\langle \alpha, \alpha\rangle } \alpha \right) \in \tilde{J}.
\end{align*}
\end{lem}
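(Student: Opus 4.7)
The goal is to exhibit a representative $s \in \mathrm{N}_{K_{2}}(\mathfrak{a})$ of the reflection $s_{\alpha}$ (viewed as a Weyl-group element on $\mathfrak{a}$) and to verify that $\exp(-Y)\,s \in K_{1}$ for the given $Y = 2\tfrac{n\pi-\varphi_{\varepsilon}}{\langle\alpha,\alpha\rangle}\alpha$. The plan is to build $s$ directly from the $K_{2}$-adapted vector $\widetilde{X}_{\alpha,i}^{\varepsilon} \in V(\alpha,\varepsilon)\cap\mathfrak{k}_{2}$ provided by Lemma~\ref{onb}~(2), namely
$$s := \exp\!\left(\tfrac{\pi}{\|\alpha\|}\widetilde{X}_{\alpha,i}^{\varepsilon}\right) \in K_{2}.$$
From the bracket relations of Lemma~\ref{onb}~(2), an $\mathfrak{so}(3)$-triple computation shows that $\mathrm{Ad}(s)$ acts as $-\mathrm{id}$ on the plane $\mathbb{R}\alpha \oplus \mathbb{R}\widetilde{Y}_{\alpha,i}^{\varepsilon}$ and as $\mathrm{id}$ on $\alpha^{\perp}\cap\mathfrak{a}$, so $\mathrm{Ad}(s)|_{\mathfrak{a}} = s_{\alpha}$ and $s \in \mathrm{N}_{K_{2}}(\mathfrak{a})$.

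Next I transport back to the $K_{1}$-adapted vectors by choosing $H_{\varepsilon} = \varphi_{\varepsilon}\alpha/\langle\alpha,\alpha\rangle$, which satisfies $\langle\alpha,H_{\varepsilon}\rangle = \varphi_{\varepsilon}$ and therefore gives $\widetilde{X}_{\alpha,i}^{\varepsilon} = \mathrm{Ad}(\exp(-H_{\varepsilon}))X_{\alpha,i}^{\varepsilon}$. This factors $s$ as
$$s \;=\; \exp(-H_{\varepsilon})\,\exp\!\left(\tfrac{\pi}{\|\alpha\|}X_{\alpha,i}^{\varepsilon}\right)\,\exp(H_{\varepsilon}).$$
Since $\mathfrak{a}$ is abelian, the left $H_{\varepsilon}$-factor combines with $\exp(-Y)$, and the remaining $\exp(H_{\varepsilon})$ can be slid past the middle factor via the conjugation identity $\mathrm{Ad}(\exp(\tfrac{\pi}{\|\alpha\|}X_{\alpha,i}^{\varepsilon}))\alpha = -\alpha$ (the same $\mathfrak{so}(3)$-rotation computation applied to the un-tilded triple of Lemma~\ref{onb}~(1)). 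The $\varphi_{\varepsilon}$-terms cancel exactly, which is precisely the reason for the $-\varphi_{\varepsilon}$ in the formula for $Y$, and one ends up with the clean identity
$$\exp(-Y)\,s \;=\; \exp\!\left(-\tfrac{2n\pi}{\langle\alpha,\alpha\rangle}\alpha\right)\,\exp\!\left(\tfrac{\pi}{\|\alpha\|}X_{\alpha,i}^{\varepsilon}\right).$$

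The second factor lies in $K_{1}$ since $X_{\alpha,i}^{\varepsilon}\in\mathfrak{k}_{1}$, so the whole question reduces to showing that $z := \exp(2\pi\alpha/\langle\alpha,\alpha\rangle) \in K_{1}$. The rescaled triple $(X_{\alpha,i}^{\varepsilon}/\|\alpha\|,\,Y_{\alpha,i}^{\varepsilon}/\|\alpha\|,\,\alpha/\langle\alpha,\alpha\rangle)$ satisfies the standard $\mathfrak{so}(3)$-bracket relations, so the connected subgroup $H_{\alpha}\subset G$ integrating the subalgebra $\mathfrak{h}_{\alpha}=\mathbb{R}X_{\alpha,i}^{\varepsilon}+\mathbb{R}Y_{\alpha,i}^{\varepsilon}+\mathbb{R}\alpha$ is a quotient of $\mathrm{SU}(2)$, and $z$ is the image of the unique central involution of $\mathrm{SU}(2)$. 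Because $\theta_{1}$ preserves $\mathfrak{h}_{\alpha}$ (with $\theta_{1}X_{\alpha,i}^{\varepsilon}=X_{\alpha,i}^{\varepsilon}$, $\theta_{1}Y_{\alpha,i}^{\varepsilon}=-Y_{\alpha,i}^{\varepsilon}$, $\theta_{1}\alpha=-\alpha$), its restriction to $H_{\alpha}$ is an automorphism of a quotient of $\mathrm{SU}(2)$, and any such automorphism fixes the center pointwise. Hence $\theta_{1}(z)=z$, so $z \in K_{1}$, and $z^{-n} \in K_{1}$ follows. The main obstacle is exactly this last step: the torus manipulations deliver a central element of the local $\mathrm{SU}(2)$-subgroup, and one must invoke the $\mathfrak{so}(3)$-triple reduction (rather than any formal bracket identity) to conclude that this element is $\theta_{1}$-fixed and therefore lies in $K_{1}$.
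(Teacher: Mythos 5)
Your construction of the representative $s=\exp(\tfrac{\pi}{\|\alpha\|}\widetilde{X}_{\alpha,i}^{\varepsilon})\in\mathrm{N}_{K_{2}}(\mathfrak{a})$ and the factorization leading to $\exp(-Y)s=\exp(-\tfrac{2n\pi}{\langle\alpha,\alpha\rangle}\alpha)\exp(\tfrac{\pi}{\|\alpha\|}X_{\alpha,i}^{\varepsilon})$ are correct and consistent with the paper. The paper uses the same $s$, but chooses $H_{\varepsilon}$ with $\langle\alpha,H_{\varepsilon}\rangle=n\pi-\varphi_{\varepsilon}$ (rather than $\varphi_{\varepsilon}$) and evaluates the commutator $\exp(\tfrac{\pi}{\|\alpha\|}\widetilde{X}_{\alpha,i}^{\varepsilon})\,a^{-1}\exp(-\tfrac{\pi}{\|\alpha\|}\widetilde{X}_{\alpha,i}^{\varepsilon})\,a$ in two ways; this absorbs the integer $n$ into the sign $(-1)^{n}$ and lands directly on $\exp(-Y)s=\exp(\mp\tfrac{\pi}{\|\alpha\|}(-1)^{n}X_{\alpha,i}^{\varepsilon})$, a single exponential of an element of $\mathfrak{k}_{1}$, so the extra torus factor $z^{-n}$ never appears and no discussion of central elements is needed.

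There is one genuine gap, in your final step: you deduce $z\in K_{1}$ from $\theta_{1}(z)=z$. That inference is not valid as stated, because the hypothesis only places $z$ in $G_{\theta_{1}}$, while $K_{1}$ is merely assumed to satisfy $(G_{\theta_{1}})_{0}\subset K_{1}\subset G_{\theta_{1}}$ and may be a proper subgroup of $G_{\theta_{1}}$ (for instance $K_{1}=(G_{\theta_{1}})_{0}$). The conclusion is nevertheless true, and your own $\mathfrak{su}(2)$-reduction repairs it: in $\mathrm{SU}(2)$ the element $\exp(2\pi A)$ equals the central involution for every $A$ in the adjoint orbit of the normalized generator, so in $H_{\alpha}$ one has $z=\exp(\tfrac{2\pi}{\langle\alpha,\alpha\rangle}\alpha)=\exp(\tfrac{2\pi}{\|\alpha\|}X_{\alpha,i}^{\varepsilon})$; the latter lies on the one-parameter subgroup generated by $X_{\alpha,i}^{\varepsilon}\in\mathfrak{k}_{1}$, which is connected, contained in $G_{\theta_{1}}$, and hence contained in $(G_{\theta_{1}})_{0}\subset K_{1}$. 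With that substitution (which makes the automorphism-fixes-the-center argument unnecessary) your proof closes.
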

\begin{proof}
For $\varepsilon \in \mathrm{U}(1)$ which satisfying $\Sigma_{\varepsilon}^{+}\neq \emptyset $ and $\alpha \in\Sigma_{\varepsilon}^{+}$, 
we take a vector $\widetilde{X}_{\alpha, i}^{\varepsilon}$ in Lemma~\ref{onb}.
Then we have  
$\widetilde{X}_{\alpha, i}^{\varepsilon} \in \mathfrak{k}_{2}$ and  
\begin{align*}
[\widetilde{X}_{\alpha, i}^{\varepsilon}, H]=-\langle \alpha, H \rangle \widetilde{Y}_{\alpha, i}^{\varepsilon},\ \ 
[\widetilde{X}_{\alpha, i}^{\varepsilon}, [\widetilde{X}_{\alpha, i}^{\varepsilon}, H]]=-\langle \alpha , H\rangle \alpha . 
\end{align*}
Thus for any  $t \in \mathbb{R}$ and $H \in \mathfrak{a}$, we have
\begin{align*}
\mathrm{Ad}(\exp (t \widetilde{X}_{\alpha, i}^{\varepsilon}))H
=H- \langle \alpha, H\rangle \left\{ \frac{1}{\| \alpha \|} \sin (t \| \alpha \|) \widetilde{Y}_{\alpha, i}^{\varepsilon} + \frac{1}{\| \alpha \|^{2}} (1- \cos (t \|\alpha \| )) \alpha \right\} .
\end{align*}
In particular, 
\begin{align*}
\mathrm{Ad}\left( \exp \left( \frac{\pi}{\| \alpha \|} \widetilde{X}_{\alpha, i}^{\varepsilon}\right) \right)H
=H- 2 \frac{\langle \alpha, H\rangle}{\langle \alpha, \alpha \rangle }\alpha  =s_{\alpha}(H)
\end{align*}
holds.
Therfore, 
$s_{\alpha } \in \mathrm{N}_{K_{2}}(\mathfrak{a}) $.
We take a vector $H_{\varepsilon}\in \mathfrak{a}$ which satisfy $\langle \alpha, H_{\varepsilon} \rangle = n\pi -\varphi_{\varepsilon}$.
Set $a=\exp(H_{\varepsilon})$. 
Then the element 
\begin{align*}
 \exp \left( \frac{\pi}{\| \alpha \|} \widetilde{X}_{\alpha, i}^{\varepsilon}\right) a^{-1} \left( \exp \left(- \frac{\pi}{\| \alpha \|} \widetilde{X}_{\alpha, i}^{\varepsilon}\right) \right) a
\end{align*}
can be calculated by the following two way.
By using Lemma~\ref{onb}, 
\begin{align*}
 &\exp \left( \frac{\pi}{\| \alpha \|} \widetilde{X}_{\alpha, i}^{\varepsilon}\right) a^{-1} \left( \exp \left(- \frac{\pi}{\| \alpha \|} \widetilde{X}_{\alpha, i}^{\varepsilon}\right) \right) a\\
=& \exp \left( \frac{\pi}{\| \alpha \|} \widetilde{X}_{\alpha, i}^{\varepsilon}\right) \left( \exp \left(- \mathrm{Ad}(a)^{-1}\frac{\pi}{\| \alpha \|} \widetilde{X}_{\alpha, i}^{\varepsilon}\right) \right) \\
=& \exp \left( \frac{\pi}{\| \alpha \|} \widetilde{X}_{\alpha, i}^{\varepsilon}\right) \left( \exp \left(- \frac{\pi}{\| \alpha \|} (-1)^{n} X_{\alpha, i}^{\varepsilon}\right) \right) .
\end{align*}
On the other hand, 
\begin{align*}
 &\exp \left( \frac{\pi}{\| \alpha \|} \widetilde{X}_{\alpha, i}^{\varepsilon}\right) a^{-1} \left( \exp \left(- \frac{\pi}{\| \alpha \|} \widetilde{X}_{\alpha, i}^{\varepsilon}\right) \right) a\\
=& \exp\left(- \mathrm{Ad} \left( \exp \left( \frac{\pi}{\| \alpha \|} \widetilde{X}_{\alpha, i}^{\varepsilon}\right)\right) H_{\varepsilon}\right) a\\
=& \exp\left(- s_{\alpha} (H_{\varepsilon} ) \right) \exp (H_{\varepsilon})\\
=& \exp\left( H_{\varepsilon} -\left( H_{\varepsilon} - 2 \frac{\langle \alpha, H_{\varepsilon} \rangle}{\langle \alpha, \alpha \rangle } \alpha \right)\right) \\
=& \exp\left( 2 \frac{\langle \alpha, H_{\varepsilon} \rangle}{\langle \alpha, \alpha \rangle } \alpha \right) \\
=& \exp\left( 2 \frac{n\pi -\varphi_{\varepsilon }}{\langle \alpha, \alpha \rangle } \alpha \right) \\
\end{align*}
Then we have
\begin{align*}
\exp\left( -2 \frac{n\pi -\varphi_{\varepsilon }}{\langle \alpha, \alpha \rangle } \alpha \right) \left( \frac{\pi}{\| \alpha \|} \widetilde{X}_{\alpha, i}^{\varepsilon}\right) 
=\left( \exp \left(- \frac{\pi}{\| \alpha \|} (-1)^{n} X_{\alpha, i}^{\varepsilon}\right) \right) \in K_{1}.
\end{align*}
Therefore, we can see that 
\begin{align*}
\left( s_{\alpha}, 2\frac{n\pi -\varphi_{\varepsilon}}{\langle \alpha, \alpha\rangle } \alpha \right) \in \tilde{J}.
\end{align*}
\end{proof}

We denote the subgroup of $\mathrm{O}(\mathfrak{a}) \ltimes \mathfrak{a}$ generated by the set 
\begin{align*}
\bigcup_{\varepsilon \in \mathrm{U}(1)} 
\left\{ 
\left. 
\left( s_{\alpha}, 2\frac{ n \pi -\varphi_{\varepsilon } }{\langle \alpha, \alpha \rangle }\right) \
\right| \ \alpha \in \Sigma_{\varepsilon}, n \in \mathbb{Z}
\right\}
\end{align*}
by $W(\tilde{\Sigma}, \{ \Sigma_{\varepsilon} \}_{\varepsilon \in \mathrm{U}(1)} )$. 
From Lemma~\ref{J-tilde}, 
$W(\tilde{\Sigma}, \{ \Sigma_{\varepsilon} \}_{\varepsilon \in \mathrm{U}(1)} )$ is a subgroup of 
$\tilde{J}$.
Matsuki (\cite{M1}, Prop~3.1) proved  that 
 $W(\tilde{\Sigma}, \{ \Sigma_{\varepsilon} \}_{\varepsilon \in \mathrm{U}(1)} )=\tilde{J}$
 when $G$ is simply connected.
If $G$ is not simply connected, 
then the orbit space $K_{2}\backslash G /K_{1}$ can be identified with 
$\overline{P}/ \{\sigma \in \tilde{J} \mid  \sigma \overline{P} = \overline{P}\}$,
where $P$ is a fundamental domain of the action of $W(\tilde{\Sigma}, \{ \Sigma_{\varepsilon} \}_{\varepsilon \in \mathrm{U}(1)} )$ 
on $\mathfrak{a}$ and $\overline{P}$ denotes the closure of $P$.

Hereafter, we consider $\overline{P}$.

The element $\left( s_{\alpha}, 2\frac{ n \pi -\varphi_{\varepsilon } }{\langle \alpha, \alpha \rangle }\right) \in W(\tilde{\Sigma}, \{ \Sigma_{\varepsilon} \}_{\varepsilon \in \mathrm{U}(1)} )$
is a reflection of $\mathfrak{a}$ with respect to the hyperplane $\{H \in \mathfrak{a} \mid \langle \alpha , H\rangle = n \pi -\varphi_{\varepsilon}\}$.
Thus, we can take a fundamental domain $P_{0}$ of the action of $W(\tilde{\Sigma}, \{ \Sigma_{\varepsilon} \}_{\varepsilon \in \mathrm{U}(1)} )$ 
on $\mathfrak{a}$ as follows;
\begin{align*}
P_{0} =\bigcap_{ \varepsilon \in \mathrm{U}(1)} P_{\varepsilon} 
\end{align*}
where
\begin{align*}
P_{\varepsilon}=
\begin{cases}
\{ H \in \mathfrak{a} \mid -\varphi_{\varepsilon} < \langle \alpha , H\rangle < \pi-\varphi_{\varepsilon}\ (\alpha \in \Sigma_{\varepsilon}^{+})\} &  (\varphi_{\varepsilon} \geq 0)\\
\{ H \in \mathfrak{a} \mid -\pi -\varphi_{\varepsilon} < \langle \alpha , H\rangle < \varphi_{\varepsilon}\ (\alpha \in \Sigma_{\varepsilon}^{+})\} & (\varphi_{\varepsilon} < 0)
\end{cases} 
\end{align*}
for $\varepsilon \in \mathrm{U}(1)$ satisfying $\Sigma_{\varepsilon}\neq \emptyset $.
Then $P_{0}$ is a nonempty open set in $\mathfrak{a}$, 
and 
is a fundamental domain $P_{0}$ of the action of $(\tilde{\Sigma}, \{ \Sigma_{\varepsilon} \}_{\varepsilon \in \mathrm{U}(1)} )$ 
on $\mathfrak{a}$.
\section{On geometry of orbits of Hermann actions }\label{sect-geom}
In this section, 
we consider geometric properties of orbits of Hermann actions.
In section \ref{sect-2nd}, we compute the second fundamental form of orbits, 
and give characterization of minimal, austere and  totally geodesic orbits.

In section \ref{sect-wr}, 
we give sufficient conditions for orbits to be arid (resp. weakly reflective) submanifolds. 

\subsection{Second fundamental form}\label{sect-2nd}
For $H \in \mathfrak{a}$, set $g=\exp(H),\ p_{1}=\pi_{1}(g)$. 
We consider the orbit $K_{2}p_{1}\subset M_{1}$.
First, we describe the tangent space and normal space of the orbit $K_{2}p_{1}$ using $W(\tilde{\Sigma}, \{ \Sigma_{\varepsilon} \}_{\varepsilon \in \mathrm{U}(1)} )$.

In general, 
\begin{align*}
T_{p_{1}}K_{2}p_{1} 
&=\left\{ \left. \left. \frac{d}{dt} \exp (tX) p_{1} \right|_{t=0} \ \right| \  X \in \mathfrak{k}_{2} \right\} \\
&=\left\{ \left. \left. \frac{d}{dt} \pi_{1}( \exp (tX) g) \right|_{t=0} \ \right| \  X \in \mathfrak{k}_{2} \right\}\\
&=\left\{ \left. \left. \frac{d}{dt} \pi_{1}(g  \exp (t \mathrm{Ad}(g)^{-1} X) ) \right|_{t=0} \ \right| \  X \in \mathfrak{k}_{2} \right\}\\
&=dL_{g} d\pi_{1} (\mathrm{Ad}(g)^{-1} \mathfrak{k}_{2}),\\
T_{p_{1}}^{\perp}K_{2}p_{1}
&=dL_{g}(\mathfrak{m}_{1}\cap \mathrm{Ad}(g)^{-1} \mathfrak{m}_{2}) 
\end{align*}
holds.
 
\begin{lem}\label{Lem-5}
\begin{align*}
d\pi_{1} (\mathfrak{g}(0) \cap \mathfrak{k}_{2})
=(V(0, -1)\cap \mathfrak{m}_{1} )\oplus \sum_{\varepsilon \in \mathrm{U}(1) \atop \mathrm{Im}(\varepsilon)>0} (V(0, \varepsilon) \cap \mathfrak{m}_{1})
\end{align*}
\end{lem}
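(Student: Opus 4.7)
The plan is to decompose $\mathfrak{g}(0)\cap \mathfrak{k}_{2}$ summand-by-summand via
\[
\mathfrak{g}(0)\cap \mathfrak{g}=V(0,1)\oplus V(0,-1)\oplus \sum_{\mathrm{Im}(\varepsilon)>0} V(0,\varepsilon),
\]
already recorded in Section~\ref{sect-preliminaries}. Intersecting with $\mathfrak{k}_{2}$ distributes over this sum because each $V(0,\varepsilon)$ is $\theta_{2}$-stable by Lemma~\ref{lem-M-bracket}~(4). At the identity $d\pi_{1}$ is simply the projection $\mathfrak{g}\to \mathfrak{m}_{1}$ along $\mathfrak{k}_{1}$, and since each $V(0,\varepsilon)$ is $\theta_{1}$-stable as well, that projection sends $V(0,\varepsilon)\cap \mathfrak{k}_{2}$ into $V(0,\varepsilon)\cap \mathfrak{m}_{1}$. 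The whole proof thus reduces to computing this image for the three types of $\varepsilon$.

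The extremal summands $\varepsilon=\pm 1$ are immediate. On $V(0,1)$ the eigenvalue condition forces $\theta_{1}\theta_{2}=\mathrm{id}$, hence $\theta_{1}=\theta_{2}$ there; therefore $V(0,1)\cap \mathfrak{k}_{2}=V(0,1)\cap \mathfrak{k}_{1}$, and this piece lies in $\ker d\pi_{1}$, contributing nothing. On $V(0,-1)$, the relation $\theta_{1}\theta_{2}=-\mathrm{id}$ gives $\theta_{2}=-\theta_{1}$, so $V(0,-1)\cap \mathfrak{k}_{2}=V(0,-1)\cap \mathfrak{m}_{1}$ and $d\pi_{1}$ restricts to the identity there, producing the summand $V(0,-1)\cap \mathfrak{m}_{1}$ on the right-hand side.

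For $\mathrm{Im}(\varepsilon)>0$ I would argue in two steps. First, any $X\in V(0,\varepsilon)\cap \mathfrak{k}_{1}\cap \mathfrak{k}_{2}$ would satisfy $\theta_{1}\theta_{2}X=X$, but on $V(0,\varepsilon)$ the only eigenvalues of $\theta_{1}\theta_{2}$ are $\varepsilon$ and $\varepsilon^{-1}$, both different from $1$, forcing $X=0$; hence $d\pi_{1}$ restricted to $V(0,\varepsilon)\cap \mathfrak{k}_{2}$ is injective into $V(0,\varepsilon)\cap \mathfrak{m}_{1}$. Second, since Lemma~\ref{lem-M-bracket}~(4) shows that $\theta_{2}$ interchanges $\mathfrak{g}(0,\varepsilon)$ and $\mathfrak{g}(0,\varepsilon^{-1})$, the trace of $\theta_{2}$ on the $2m(0,\varepsilon)$-real-dimensional space $V(0,\varepsilon)$ vanishes, so $\dim_{\mathbb{R}}(V(0,\varepsilon)\cap \mathfrak{k}_{2})=m(0,\varepsilon)$; the same trace argument applied to $\theta_{1}$ yields $\dim_{\mathbb{R}}(V(0,\varepsilon)\cap \mathfrak{m}_{1})=m(0,\varepsilon)$. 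The injection is therefore a bijection onto $V(0,\varepsilon)\cap \mathfrak{m}_{1}$.

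Assembling the three cases produces the claimed equality; the sum on the right is automatically direct because its summands lie in distinct eigenspaces of $\theta_{1}\theta_{2}$. I expect the only real obstacle to be bookkeeping: keeping track on each $V(0,\varepsilon)$ of how $\theta_{1}$, $\theta_{2}$, and their product interact, and ensuring that the trace-based dimension count pins down the image exactly, rather than only up to containment.
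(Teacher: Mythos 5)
Your proof is correct, and its skeleton is the same as the paper's: decompose $\mathfrak{g}(0)\cap\mathfrak{k}_{2}$ along $V(0,1)\oplus V(0,-1)\oplus\sum_{\mathrm{Im}(\varepsilon)>0}V(0,\varepsilon)$, observe that $V(0,1)\cap\mathfrak{k}_{2}\subset\mathfrak{k}_{1}\cap\mathfrak{k}_{2}$ dies under $d\pi_{1}$ and that $V(0,-1)\cap\mathfrak{k}_{2}=V(0,-1)\cap\mathfrak{m}_{1}$ survives unchanged, then show $d\pi_{1}$ is a bijection $V(0,\varepsilon)\cap\mathfrak{k}_{2}\to V(0,\varepsilon)\cap\mathfrak{m}_{1}$ for $\mathrm{Im}(\varepsilon)>0$. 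Where you differ is only in that last step: the paper argues constructively, computing $d\pi_{1}(Y+\overline{Y})=\tfrac{1}{2}\bigl((1-\varepsilon)Y+(1-\varepsilon^{-1})\overline{Y}\bigr)$ for injectivity and exhibiting the explicit preimage $X'=\frac{X+\theta_{2}(X)}{1-\mathrm{Re}(\varepsilon)}$ for surjectivity, whereas you get injectivity from the eigenvalue observation $V(0,\varepsilon)\cap\mathfrak{k}_{1}\cap\mathfrak{k}_{2}=\{0\}$ and surjectivity from the trace argument $\dim(V(0,\varepsilon)\cap\mathfrak{k}_{2})=\dim(V(0,\varepsilon)\cap\mathfrak{m}_{1})=m(0,\varepsilon)$. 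Both are valid (your dimension count is consistent with Lemma~\ref{onb}, which exhibits $2m(\alpha,\varepsilon)$ independent vectors in $V(\alpha,\varepsilon)$); the one thing the paper's constructive route buys is the preimage formula itself, which is reused verbatim in Section~\ref{sect-2nd} to realize $dL_{g}(X)$ as the value of a Killing field $\bigl(\frac{X+\theta_{2}(X)}{1-\mathrm{Re}(\varepsilon)}\bigr)^{\ast}$ when computing the second fundamental form, so with your argument that formula would still have to be produced separately later.
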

\begin{proof}
Since 
\begin{align*}
\mathfrak{g}(0)\cap \mathfrak{g}=V(0,1)\oplus V(0, -1)\oplus \sum_{\varepsilon \in \mathrm{U}(1) \atop \mathrm{Im}(\varepsilon) >0} V(0, \varepsilon),
\end{align*}
we have 
\begin{align*}
\mathfrak{g}(0) \cap \mathfrak{k}_{2}
=(V(0, 1)\cap \mathfrak{k}_{2} )\oplus(V(0, -1)\cap \mathfrak{k}_{2} )\oplus \sum_{\varepsilon \in \mathrm{U}(1) \atop \mathrm{Im}(\varepsilon)>0} (V(0, \varepsilon) \cap \mathfrak{k}_{2}).
\end{align*}
Then since 
 $(V(0, 1)\cap \mathfrak{k}_{2} )\subset \mathfrak{k}_{1}\cap \mathfrak{k}_{2}$
 and 
 $(V(0, -1)\cap \mathfrak{k}_{2} )\subset \mathfrak{m}_{1}\cap \mathfrak{k}_{2}$, 
 $d\pi_{1}((V(0, 1)\cap \mathfrak{k}_{2} ))=\{0\}$
 and 
 $d\pi_{1}( (V(0, -1)\cap \mathfrak{k}_{2} ))= V(0, -1)\cap \mathfrak{m}_{1} $
 holds, respectively.
 
 For each 
$\varepsilon \in \mathrm{U}(1)$ with $\mathrm{Im}(\varepsilon)>0$, 
from 
$V(0, \varepsilon)=(\mathfrak{g}(0, \varepsilon)\oplus \mathfrak{g}(0, \varepsilon^{-1}))\cap \mathfrak{g}$
and $\overline{\mathfrak{g}(0, \varepsilon)}=\mathfrak{g}(0, \varepsilon^{-1})$, 
for any 
$X\in V(0, \varepsilon) \cap \mathfrak{k}_{2}$, 
there exists
$Y \in \mathfrak{g}(0, \varepsilon)$ 
such that 
$X=Y +\overline{Y}$.
Then we have 
\begin{align*}
d\pi_{1}(X) 
=&\frac{X- \theta_{1}(X)}{2}
=\frac{X- \theta_{1}\theta_{2}(X)}{2}
=\frac{Y +\overline{Y}- \theta_{1}\theta_{2}(Y +\overline{Y})}{2}\\
=&\frac{(1-\varepsilon)Y +(1-\varepsilon^{-1})\overline{Y}}{2}.
\end{align*}
Hence if $d\pi_{1}(X)=0$, then $X=0$ holds.

Conversely, 
for $X \in V(0, \varepsilon) \cap \mathfrak{m}_{1}$, 
set 
\begin{align*}
X'=\frac{X+\theta_{2}(X)}{1-\mathrm{Re}(\varepsilon)}.
\end{align*}
Then we have 
$X'\in V(0, \varepsilon)$ and $d\pi (X')=X$.

Therefore, 
we have 
$d\pi (V(0, \varepsilon) \cap \mathfrak{k}_{2})=V(0, \varepsilon) \cap \mathfrak{m}_{1}$.
\end{proof}
For each 
$\varepsilon \in \mathrm{U}(1)$ and $\alpha\in \Sigma_{\varepsilon}^{+}$, 
from Lemma~\ref{onb}, 
\begin{align*}
V(\alpha, \varepsilon) \cap \mathfrak{k}_{2}= \sum_{i=1}^{m(\alpha, \varepsilon)} \mathbb{R}\cdot \widetilde{X}_{\alpha, i}^{\varepsilon}.
\end{align*}
Thus we have 
\begin{align*}
\mathrm{Ad}(g)^{-1} (V(\alpha, \varepsilon) \cap \mathfrak{k}_{2})
=&\sum_{i=1}^{m(\alpha, \varepsilon)} \mathbb{R}\cdot (\cos \langle \alpha, H \rangle \widetilde{X}_{\alpha, i}^{\varepsilon}-\sin \langle \alpha, H \rangle \widetilde{Y}_{\alpha, i}^{\varepsilon})\\
=&\sum_{i=1}^{m(\alpha, \varepsilon)} \mathbb{R}\cdot (\cos (\langle \alpha, H \rangle +\varphi_{\varepsilon}) X_{\alpha, i}^{\varepsilon}-\sin (\langle \alpha, H \rangle +\varphi_{\varepsilon}) Y_{\alpha, i}^{\varepsilon}).
\end{align*}
Moreover, from Lemma~\ref{Lem-5} 
we obtain
\begin{align*}
&dL_{g}^{-1} (T_{p_{1}}K_{2}p_{1})\\
=&(V(0, -1)\cap \mathfrak{m}_{1} )\oplus \sum_{\varepsilon \in \mathrm{U}(1) \atop \mathrm{Im}(\varepsilon)>0} (V(0, \varepsilon) \cap \mathfrak{m}_{1})
\oplus \sum_{\varepsilon \in \mathrm{U}(1) } \sum_{\alpha\in \Sigma_{\varepsilon}^{+} \atop \langle \alpha, H \rangle +\varphi_{\varepsilon} \not\in \pi  \mathbb{Z}} (V(\alpha, \varepsilon) \cap \mathfrak{m}_{1})
\end{align*}
and 
\begin{align*}
\mathfrak{m}_{1}
=&(\mathfrak{g}(0)\cap \mathfrak{m}_{1})\oplus \sum_{\varepsilon \in \mathrm{U}(1) } \sum_{\alpha\in \Sigma_{\varepsilon}^{+}} (V(\alpha, \varepsilon) \cap \mathfrak{m}_{1})\\
=&\mathfrak{a}\oplus (V(0, -1)\cap \mathfrak{m}_{1} )\oplus \sum_{\varepsilon \in \mathrm{U}(1) \atop \mathrm{Im}(\varepsilon)>0} (V(0, \varepsilon) \cap \mathfrak{m}_{1})
\oplus \sum_{\varepsilon \in \mathrm{U}(1) } \sum_{\alpha\in \Sigma_{\varepsilon}^{+}} (V(\alpha, \varepsilon) \cap \mathfrak{m}_{1}).
\end{align*}
Therfoer
\begin{align*}
dL_{g}^{-1} (T^{\perp}_{p_{1}}K_{2}p_{1})
=\mathfrak{a} \oplus \sum_{\varepsilon \in \mathrm{U}(1) } \sum_{\alpha\in \Sigma_{\varepsilon}^{+} \atop \langle \alpha, H \rangle +\varphi_{\varepsilon} \in \pi \mathbb{Z} } (V(\alpha, \varepsilon) \cap \mathfrak{m}_{1}).
\end{align*}

Next, we compute the second fundamental form of the orbit $K_{2}p_{1}\subset M_{1}$.

For each $X\in \mathfrak{g}$, 
We define a vector field $X^{\ast}$ on $M_{1}$
as follows:
\begin{align*}
(X^{\ast})_{p}=\left. \frac{d}{dt} \exp (tX) p\right|_{t=0} \quad (p\in M_{1}).
\end{align*}

The following lemma is known.
\begin{lem}[\cite{I1}, Lemma~4.13, (p.113)] \label{lem-nabla}
Denote by $\widetilde{\nabla}$ the Levi-Civita connection on $M_{1}$.
\begin{enumerate}
\item For $g' \in G$, and $X, Y\in \mathfrak{g}$, 
\begin{align*}
dL_{g'}(\widetilde{\nabla}_{X^{\ast}}Y^{\ast})= \widetilde{\nabla}_{(\mathrm{Ad}(g') X)^{\ast}}(\mathrm{Ad}(g') Y)^{\ast},
\end{align*}
\item For $X, Y \in \mathfrak{g}$, 
\begin{align*}
(\widetilde{\nabla}_{X^{\ast}} Y^{\ast})_{\pi_{1}(e)} =
\begin{cases}
-[X, Y] & (X\in \mathfrak{m}_{1})\\
0& (X\in \mathfrak{k}_{1}),
\end{cases}
\end{align*}
\item 
For $p=\pi_{1}(g')\in M_{1}$,
\begin{align*}
(\widetilde{\nabla}_{X^{\ast}} Y^{\ast})_{p} =-dL_{g'}\big( [(\mathrm{Ad}(g')^{-1}X) _{\mathfrak{m}_{1}}, \mathrm{Ad}(g')^{-1}Y]\big)_{\mathfrak{m}_{1}}
\end{align*}
\end{enumerate}
\end{lem}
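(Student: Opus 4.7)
The plan is to handle the three parts sequentially, leveraging two properties of the fundamental vector field construction $X \mapsto X^{\ast}$ on $M_1$. The first is that, since the metric on $M_1$ is $G$-invariant, each left translation $L_{g'}$ is an isometry; a short computation with one-parameter groups yields
\[
dL_{g'}(X^{\ast})_{p} = \left.\frac{d}{dt}\right|_{t=0} g'\exp(tX)p = (\mathrm{Ad}(g')X)^{\ast}_{g' p},
\]
because $g'\exp(tX) = \exp(t\,\mathrm{Ad}(g')X)g'$. The second is that fundamental vector fields satisfy $[X^{\ast},Y^{\ast}] = -[X,Y]^{\ast}$, reflecting the fact that $G$ acts on $M_1$ on the left. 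Since isometries preserve the Levi-Civita connection, part (1) follows immediately from the first property.

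For part (2) I would split into cases. When $X\in\mathfrak{k}_{1}$, the curve $t\mapsto\exp(tX)\pi_{1}(e)$ is constant, so $X^{\ast}_{\pi_{1}(e)}=0$ and the covariant derivative vanishes at $\pi_{1}(e)$ tautologically. When $X\in\mathfrak{m}_{1}$, the curve $\gamma(t)=\exp(tX)\pi_{1}(e)$ is a geodesic and $X^{\ast}|_{\gamma}$ is its velocity field, so $\bigl(\widetilde{\nabla}_{X^{\ast}}X^{\ast}\bigr)_{\pi_{1}(e)}=0$. Polarizing gives $\bigl(\widetilde{\nabla}_{X^{\ast}}Y^{\ast}+\widetilde{\nabla}_{Y^{\ast}}X^{\ast}\bigr)_{\pi_{1}(e)}=0$ for every $Y\in\mathfrak{m}_{1}$; combined with torsion-freeness $\widetilde{\nabla}_{X^{\ast}}Y^{\ast}-\widetilde{\nabla}_{Y^{\ast}}X^{\ast}=-[X,Y]^{\ast}$ and the fact that $[X,Y]\in\mathfrak{k}_{1}$ vanishes when evaluated at $\pi_{1}(e)$, both $(\widetilde{\nabla}_{X^{\ast}}Y^{\ast})_{\pi_{1}(e)}$ and $(\widetilde{\nabla}_{Y^{\ast}}X^{\ast})_{\pi_{1}(e)}$ vanish, consistent with the formula since $-[X,Y]$ maps to zero under $T_{\pi_{1}(e)}M_{1}\cong\mathfrak{m}_{1}$. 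For $Y\in\mathfrak{k}_{1}$, the vanishing $Y^{\ast}_{\pi_{1}(e)}=0$ kills $\bigl(\widetilde{\nabla}_{Y^{\ast}}X^{\ast}\bigr)_{\pi_{1}(e)}$, and torsion-freeness gives $\bigl(\widetilde{\nabla}_{X^{\ast}}Y^{\ast}\bigr)_{\pi_{1}(e)}=-[X,Y]^{\ast}_{\pi_{1}(e)}=-[X,Y]$ since $[X,Y]\in\mathfrak{m}_{1}$. Linearity in $Y\in\mathfrak{g}$ then yields the stated formula.

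Part (3) is obtained by transferring the computation from $p=g'\pi_{1}(e)$ to $\pi_{1}(e)$ via part (1): replacing $X,Y$ by $\mathrm{Ad}(g')^{-1}X,\mathrm{Ad}(g')^{-1}Y$ in part (1) and inverting yields $(\widetilde{\nabla}_{X^{\ast}}Y^{\ast})_{p} = dL_{g'}\bigl(\widetilde{\nabla}_{(\mathrm{Ad}(g')^{-1}X)^{\ast}}(\mathrm{Ad}(g')^{-1}Y)^{\ast}\bigr)_{\pi_{1}(e)}$; part (2) then evaluates the inner covariant derivative. Since part (2) shows that only the $\mathfrak{m}_{1}$-component of the first argument contributes at $\pi_{1}(e)$, the projection $(\mathrm{Ad}(g')^{-1}X)_{\mathfrak{m}_{1}}$ appearing in the stated formula arises automatically, as does the outer $\mathfrak{m}_{1}$-projection coming from the identification $T_{\pi_{1}(e)}M_{1}\cong\mathfrak{m}_{1}$.

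The main obstacle is the bookkeeping between the abstract tangent vector at $\pi_{1}(e)$ and its image in $\mathfrak{m}_{1}$ under $d\pi_{1}$: one must verify that the $\mathfrak{m}_{1}$-projections appearing in the statement of (3) are precisely those produced by part (2) after conjugation by $g'$, and that the sign convention $[X^{\ast},Y^{\ast}]=-[X,Y]^{\ast}$ is applied consistently. Everything else is routine once parts (1) and (2) are in hand.
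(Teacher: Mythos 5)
The paper does not prove this lemma at all: it is quoted verbatim from Ikawa (\cite{I1}, Lemma~4.13) as a known result, so there is no in-paper argument to compare yours against. Your proof is correct and is the standard one for this statement: part (1) from the equivariance $dL_{g'}X^{\ast}=(\mathrm{Ad}(g')X)^{\ast}$ together with the fact that isometries preserve the Levi-Civita connection; part (2) from tensoriality in the direction argument when $X^{\ast}_{\pi_{1}(e)}=0$, the geodesic property of $t\mapsto\exp(tX)\pi_{1}(e)$ for $X\in\mathfrak{m}_{1}$, polarization, and torsion-freeness with $[X^{\ast},Y^{\ast}]=-[X,Y]^{\ast}$; part (3) by conjugating (2) back through (1). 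You also correctly resolve the one point where the statement is written loosely, namely that for $X,Y\in\mathfrak{m}_{1}$ the expression $-[X,Y]$ lies in $\mathfrak{k}_{1}$ and must be read through the identification $T_{\pi_{1}(e)}M_{1}\cong\mathfrak{m}_{1}$, i.e.\ as its $\mathfrak{m}_{1}$-component (zero), consistent with the explicit outer projection in part (3). No gaps.
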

For $p_{1}=\pi_{1}(\exp (H)) \in M_{1}$ and $X\in \mathfrak{g}$, 
since 
\begin{align*}
(X^{\ast})_{p_{1}} =\left. \frac{d}{dt} \exp(tX)p_{1} \right|_{t=0} 
=dL_{g}\circ d\pi_{1}(\mathrm{Ad}(\exp H)^{-1}X)
\end{align*}
\begin{itemize}
\item For $X\in V(0, -1)\cap \mathfrak{m}_{1}$,  
$(X^{\ast})_{p_{1}} =dL_{g}X$.
\item For $\varepsilon \in \mathrm{U}(1)$ with $\mathrm{Im}(\varepsilon)>0$ and $X\in V(0, \varepsilon)\cap \mathfrak{m}_{1}$, 
\begin{align*}
\left(\frac{X+\theta_{2}(X)}{1-\mathrm{Re}(\varepsilon)} \right)^{\ast}_{p_{1}} =dL_{g}(X).
\end{align*}
\item For $\varepsilon \in \mathrm{U}(1)$, $\alpha \in \Sigma_{\varepsilon}^{+}$ ( $\langle \alpha, H\rangle+\varphi_{\varepsilon} \not\in \pi \mathbb{Z} $) and 
$1\leq i\leq m(\alpha, \varepsilon)$, 
\begin{align*}
(\widetilde{X}_{\alpha, i}^{\varepsilon})^{\ast}_{p_{1}} 
&=dL_{g}\circ d\pi_{1}(\mathrm{Ad}(g)^{-1} \widetilde{X}_{\alpha, i}^{\varepsilon})\\
&=dL_{g}\circ d\pi_{1}(\cos (\langle \alpha, H \rangle +\varphi_{\varepsilon})X_{\alpha, i}^{\varepsilon} -\sin (\langle \alpha, H \rangle +\varphi_{\varepsilon})Y_{\alpha, i}^{\varepsilon})\\
&=dL_{g}( -\sin (\langle \alpha, H \rangle +\varphi_{\varepsilon})Y_{\alpha, i}^{\varepsilon})
\end{align*}
holds. 
Thus we have 
\begin{align*}
- \left( \frac{\widetilde{X}_{\alpha, i}^{\varepsilon}}{\sin (\langle \alpha, H \rangle +\varphi_{\varepsilon})}\right)^{\ast}_{p_{1}}=dL_{g}(Y_{\alpha, i}^{\varepsilon}).
\end{align*}
\end{itemize}
For $X \in \mathfrak{k}_{2}$, 
$X^{\ast}$ gives a vector field on the orbit $K_{2}p_{1}$.
Let $B_{H}$ denotes the second fundamental form of $K_{1}p_{1}\subset M_{1}$.
Then we have the following theorem. 
\begin{thm}\label{thm-2ndff}
\begin{enumerate}
\item For $\varepsilon, \delta \in \{a\in \mathrm{U}(1)\setminus \{1\} \mid \mathrm{Im} (a)\geq 0 \}$, $X\in V(0, \varepsilon)\cap \mathfrak{m}_{1}, Y\in V(0, \delta)\cap \mathfrak{m}_{1}$, 
\begin{align*}
dL_{g}^{-1}B_{H}(dL_{g} X, dL_{g} Y)=0
\end{align*}
holds.
\item For $\varepsilon \in \mathrm{U}(1), \alpha \in \Sigma_{\varepsilon}^{+}\ (\langle \alpha, H \rangle +\varphi_{\varepsilon} \not\in \pi \mathbb{Z}), 1\leq i\leq m(\alpha, \varepsilon)$, 
$\delta \in \{a\in \mathrm{U}(1)\setminus \{1\} \mid \mathrm{Im} (a)\geq 0 \}$, $X\in V(0, \delta)\cap \mathfrak{m}_{1}$,
\begin{align*}
dL_{g}^{-1}B_{H}(dL_{g} X, dL_{g} Y_{\alpha, i}^{\varepsilon})=\cot (\langle \alpha, H \rangle +\varphi_{\varepsilon})[X, X_{\alpha, i}^{\varepsilon}]^{\perp}
\end{align*}
holds.
\item For $\varepsilon, \delta \in \mathrm{U}(1), \alpha \in \Sigma_{\varepsilon}^{+}\ (\langle \alpha, H \rangle +\varphi_{\varepsilon} \not\in \pi \mathbb{Z}), 1\leq i\leq m(\alpha, \varepsilon)$, 
$\beta \in \Sigma_{\varepsilon}^{+}\ (\langle \beta, H \rangle +\varphi_{\delta} \not\in \pi \mathbb{Z}), 1\leq j\leq m(\beta, \delta)$, 
\begin{align*}
dL_{g}^{-1}B_{H}( dL_{g} Y_{\alpha, i}^{\varepsilon},  dL_{g} Y_{\beta, j}^{\delta})=\cot (\langle \beta, H \rangle +\varphi_{\delta})[Y_{\alpha, i}^{\varepsilon}, X_{\beta, j}^{\delta}]^{\perp}
\end{align*}
holds.
\end{enumerate}
Here, $X^{\perp}$ denotes the normal component of $X\in \mathfrak{m}_{1}$.
\end{thm}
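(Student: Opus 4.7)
The strategy is to apply Lemma~\ref{lem-nabla}(3) to Killing vector fields $X'^{*}$ with $X'\in\mathfrak{k}_{2}$ realizing the given tangent vectors at $p_{1}=\pi_{1}(g)$, and then extract the normal component. From the identifications made just before the theorem, the appropriate lifts are: $X$ itself for $X\in V(0,-1)\cap\mathfrak{m}_{1}$; $X':=(X+\theta_{2}X)/(1-\mathrm{Re}\,\varepsilon)$ for $X\in V(0,\varepsilon)\cap\mathfrak{m}_{1}$ with $\mathrm{Im}\,\varepsilon>0$ (as in the proof of Lemma~\ref{Lem-5}); and $-\widetilde{X}_{\alpha,i}^{\varepsilon}/\sin(\langle\alpha,H\rangle+\varphi_{\varepsilon})$ for $Y_{\alpha,i}^{\varepsilon}$. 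A short check using $\theta_{1}X=-X$ shows that in the first two cases the $\mathfrak{m}_{1}$-projection of the lift equals the original vector~$X$.

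Since every lift of the first two types lies in $\mathfrak{g}(0)$, it commutes with $H$ and $\mathrm{Ad}(g)^{-1}$ acts trivially on it. For the third type, Lemma~\ref{onb} together with $\widetilde{X}_{\alpha,i}^{\varepsilon}=\cos\varphi_{\varepsilon}\,X_{\alpha,i}^{\varepsilon}-\sin\varphi_{\varepsilon}\,Y_{\alpha,i}^{\varepsilon}$ yields
\[
\mathrm{Ad}(g)^{-1}\!\left(-\frac{\widetilde{X}_{\alpha,i}^{\varepsilon}}{\sin(\langle\alpha,H\rangle+\varphi_{\varepsilon})}\right)=-\cot(\langle\alpha,H\rangle+\varphi_{\varepsilon})\,X_{\alpha,i}^{\varepsilon}+Y_{\alpha,i}^{\varepsilon},
\]
whose $\mathfrak{m}_{1}$-projection is $Y_{\alpha,i}^{\varepsilon}$. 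Plugging these into Lemma~\ref{lem-nabla}(3) reduces each $(\widetilde{\nabla}_{X'^{*}}Y'^{*})_{p_{1}}$ to a single expression of the form $-dL_{g}\bigl([(\cdot)_{\mathfrak{m}_{1}},\cdot]_{\mathfrak{m}_{1}}\bigr)$, and then $B_{H}$ is obtained via the projection $(\cdot)^{\perp}$.

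For cases (2) and (3), the inclusions $[\mathfrak{m}_{1},\mathfrak{k}_{1}]\subset\mathfrak{m}_{1}$ and $[\mathfrak{m}_{1},\mathfrak{m}_{1}]\subset\mathfrak{k}_{1}$ (together with $X_{\alpha,i}^{\varepsilon}\in\mathfrak{k}_{1}$ and $Y_{\alpha,i}^{\varepsilon}\in\mathfrak{m}_{1}$) immediately discard the terms without a cotangent factor, and the surviving coefficients match the formulas in the statement. The main obstacle is case (1): the bracket $[X'_{\mathfrak{m}_{1}},Y']_{\mathfrak{m}_{1}}=[X,Y'_{\mathfrak{k}_{1}}]$ lies in $\mathfrak{g}(0)\cap\mathfrak{m}_{1}$, which meets the normal space $dL_{g}^{-1}T_{p_{1}}^{\perp}K_{2}p_{1}$ only in $\mathfrak{a}=V(0,1)\cap\mathfrak{m}_{1}$. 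One must therefore show that this $\mathfrak{a}$-component vanishes, and I would do this via $\mathrm{ad}$-invariance of $\langle\cdot,\cdot\rangle$: for every $H\in\mathfrak{a}$,
\[
\langle[X,Y'_{\mathfrak{k}_{1}}],H\rangle=\langle Y'_{\mathfrak{k}_{1}},[H,X]\rangle=0,
\]
since $X\in V(0,\varepsilon)\subset\mathfrak{g}(0)$ commutes with $\mathfrak{a}$. This forces the normal component to be zero and completes (1).
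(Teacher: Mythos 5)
Your proposal is correct and follows essentially the same route as the paper: realize each tangent vector as the value at $p_{1}$ of a Killing field $X'^{\ast}$ with $X'\in\mathfrak{k}_{2}$ (using exactly the lifts listed before the theorem), apply Lemma~\ref{lem-nabla}(3), and discard terms via $[\mathfrak{m}_{1},\mathfrak{m}_{1}]\subset\mathfrak{k}_{1}$. Your treatment of case (1) — observing the result lies in $\mathfrak{a}$ and killing it by $\mathrm{ad}$-invariance together with $[X,H']=0$ for $X\in\mathfrak{g}(0)$ — is precisely the paper's argument.
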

\begin{proof}
For $\varepsilon, \delta \in \{a\in \mathrm{U}(1)\setminus \{1\} \mid \mathrm{Im} (a)\geq 0 \}$, 
$X\in V(0, \varepsilon)\cap \mathfrak{m}_{1}, Y\in V(0, \delta)\cap \mathfrak{m}_{1}$, 
from 
\begin{align*}
\left(\frac{X+\theta_{2}(X)}{1-\mathrm{Re}(\varepsilon)} \right)^{\ast}_{p_{1}} =dL_{g}(X), 
\left(\frac{Y+\theta_{2}(Y)}{1-\mathrm{Re}(\delta)} \right)^{\ast}_{p_{1}} =dL_{g}(Y)
\end{align*}
and Lemma~\ref{lem-nabla}, we have 
\begin{align*}
dL_{g}^{-1}B_{H}(dL_{g} X, dL_{g} Y)
=-\left[ X,  \frac{Y+\theta_{2} (Y)}{1-\mathrm{Re}(\delta)}\right]^{\perp}_{\mathfrak{m}_{1}}
=-\left[ X,  \left( \frac{Y+\theta_{2} (Y)}{1-\mathrm{Re}(\delta)}\right)_{\mathfrak{k}_{1}} \right]^{\perp} \in \mathfrak{a}.
\end{align*}
For each $H'\in \mathfrak{a}$, we have 
\begin{align*}
\left\langle \left[ X,  \left( \frac{Y+\theta_{2} (Y)}{1-\mathrm{Re}(\delta)}\right)_{\mathfrak{k}_{1}} \right]^{\perp}, H' \right\rangle
&=\left\langle \left[ X,  \left( \frac{Y+\theta_{2} (Y)}{1-\mathrm{Re}(\delta)}\right)_{\mathfrak{k}_{1}} \right], H' \right\rangle\\
&=-\left\langle   \left( \frac{Y+\theta_{2} (Y)}{1-\mathrm{Re}(\delta)}\right)_{\mathfrak{k}_{1}} , [ X, H' ] \right\rangle
=0.
\end{align*}
Thus 
\begin{align*}
\left[ X,  \left( \frac{Y+\theta_{2} (Y)}{1-\mathrm{Re}(\delta)}\right)_{\mathfrak{k}_{1}} \right]^{\perp}=0.
\end{align*}
holds. 

For 2,
since 
\begin{align*}
dL_{g}(Y_{\alpha, i}^{\varepsilon})=- \left( \frac{\widetilde{X}_{\alpha, i}^{\varepsilon}}{\sin (\langle \alpha, H \rangle +\varphi_{\varepsilon})}\right)^{\ast}_{p_{1}}
\end{align*}
and Lemma~\ref{lem-nabla}, we have 
\begin{align*}
dL_{g}^{-1}B_{H}(dL_{g} X, dL_{g} Y_{\alpha, i}^{\varepsilon})
&=-[X, -\cot (\langle \alpha, H \rangle +\varphi_{\varepsilon}) X_{\alpha, i}^{\varepsilon}+ Y_{\alpha, i}^{\varepsilon}]^{\perp}_{\mathfrak{m}_{1}}\\
&=[X, \cot (\langle \alpha, H \rangle +\varphi_{\varepsilon}) X_{\alpha, i}^{\varepsilon}]^{\perp}.
\end{align*}
for each $X\in V(0, \delta)\cap \mathfrak{m}_{1}$.

For 3,
\begin{align*}
dL_{g}^{-1}B_{H}(dL_{g}(Y_{\alpha, i}^{\varepsilon}),dL_{g}(Y_{\beta, j}^{\delta}))
&=-[Y_{\alpha, i}^{\varepsilon}, -\cot (\langle \beta, H \rangle \varphi_{\delta}) X_{\beta, j}^{\delta} +Y_{\beta, j}^{\delta}]^{\perp}_{\mathfrak{m}_{1}}\\
&=[Y_{\alpha, i}^{\varepsilon}, \cot (\langle \beta, H \rangle \varphi_{\delta}) X_{\beta, j}^{\delta}]^{\perp}
\end{align*}
holds.
\end{proof}
From Theorem~\ref{thm-2ndff}, we have the following corollary. 
\begin{cor}
Let $m_{H}$ denotes the mean curvature vector field of the orbit $K_{2}p_{1}\subset M_{1}$.
Then,
\begin{align*}
(m_{H})_{p_{1}}=-dL_{g}\left( \sum_{\varepsilon \in \mathrm{U}(1)} \sum_{\alpha \in \Sigma_{\varepsilon}^{+} \atop \langle \alpha, H \rangle +\varphi_{\varepsilon} \not\in \pi \mathbb{Z}}
m(\alpha, \varepsilon) \cot (\langle \alpha , H\rangle +\varphi_{\varepsilon} ) \alpha \right)
\end{align*}
holds. 
\end{cor}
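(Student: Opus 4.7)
The plan is to recognize the mean curvature vector as the trace of $B_H$ over an orthonormal basis of the tangent space of the orbit $K_2 p_1$, and then read off which diagonal entries vanish using Theorem~\ref{thm-2ndff}(1) and which contribute using Theorem~\ref{thm-2ndff}(3). From the decomposition established just before Theorem~\ref{thm-2ndff},
\begin{align*}
dL_g^{-1}(T_{p_1}K_2 p_1)
= (V(0,-1)\cap \mathfrak{m}_1)
\oplus \sum_{\mathrm{Im}(\varepsilon)>0}(V(0,\varepsilon)\cap \mathfrak{m}_1)
\oplus \sum_{\substack{\varepsilon \in \mathrm{U}(1)\\ \alpha \in \Sigma_\varepsilon^+ \\ \langle \alpha,H\rangle+\varphi_\varepsilon \notin \pi\mathbb{Z}}} (V(\alpha,\varepsilon)\cap \mathfrak{m}_1),
\end{align*}
the summands are pairwise orthogonal, so concatenating arbitrary orthonormal bases of the $V(0,\varepsilon)\cap\mathfrak{m}_1$ with the distinguished bases $\{Y_{\alpha,i}^\varepsilon\}_{i=1}^{m(\alpha,\varepsilon)}$ of Lemma~\ref{onb} and then applying the isometry $dL_g$ yields an orthonormal basis of $T_{p_1}K_2 p_1$.

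Next, I would apply Theorem~\ref{thm-2ndff}(1) to kill off all diagonal terms $B_H(dL_g X, dL_g X)$ with $X \in V(0,\varepsilon)\cap\mathfrak{m}_1$ for $\varepsilon = -1$ or $\mathrm{Im}(\varepsilon)>0$, so that only the contributions from the $Y_{\alpha,i}^\varepsilon$ vectors survive. Theorem~\ref{thm-2ndff}(3) applied with $(\alpha,\varepsilon,i)=(\beta,\delta,j)$ gives
\begin{align*}
dL_g^{-1} B_H(dL_g Y_{\alpha,i}^\varepsilon, dL_g Y_{\alpha,i}^\varepsilon)
= \cot(\langle \alpha,H\rangle+\varphi_\varepsilon)\,[Y_{\alpha,i}^\varepsilon, X_{\alpha,i}^\varepsilon]^\perp.
\end{align*}

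To finish, I would use the bracket identity $[X_{\alpha,i}^\varepsilon,Y_{\alpha,i}^\varepsilon]=\alpha$ from Lemma~\ref{onb}, so that $[Y_{\alpha,i}^\varepsilon, X_{\alpha,i}^\varepsilon] = -\alpha \in \mathfrak{a}$; since $\mathfrak{a}$ is contained in the normal space $dL_g^{-1}T^\perp_{p_1}K_2p_1$, the normal projection is trivial and $[Y_{\alpha,i}^\varepsilon, X_{\alpha,i}^\varepsilon]^\perp = -\alpha$. Summing over $1\le i\le m(\alpha,\varepsilon)$, over $\alpha\in\Sigma_\varepsilon^+$ satisfying $\langle\alpha,H\rangle+\varphi_\varepsilon\notin\pi\mathbb{Z}$, and over $\varepsilon\in\mathrm{U}(1)$ then reproduces exactly the stated formula, with the overall minus sign coming from $[Y,X]=-\alpha$.

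I do not anticipate any substantive obstacle: the heavy lifting was absorbed into Lemma~\ref{onb} (which supplies both the orthonormality and the precise bracket identities) and Theorem~\ref{thm-2ndff} (which diagonalizes $B_H$ with respect to this basis). The remaining argument is essentially bookkeeping — identifying which root data indexes an orthonormal frame for the orbit's tangent space, confirming that the off-decomposition diagonal contributions vanish, and tracking the sign in $[Y_{\alpha,i}^\varepsilon,X_{\alpha,i}^\varepsilon]^\perp=-\alpha$.
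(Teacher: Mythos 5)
Your proposal is correct and follows essentially the same route as the paper: the paper's proof consists precisely of evaluating the diagonal terms $dL_{g}^{-1}B_{H}(dL_{g}Y_{\alpha,i}^{\varepsilon}, dL_{g}Y_{\alpha,i}^{\varepsilon}) = \cot(\langle\alpha,H\rangle+\varphi_{\varepsilon})[Y_{\alpha,i}^{\varepsilon},X_{\alpha,i}^{\varepsilon}]^{\perp} = -\cot(\langle\alpha,H\rangle+\varphi_{\varepsilon})\alpha$ via Theorem~\ref{thm-2ndff} and summing, exactly as you do. Your additional bookkeeping (orthogonality of the summands, vanishing of the $V(0,\varepsilon)$ diagonal contributions by Theorem~\ref{thm-2ndff}(1), and the observation that $\mathfrak{a}$ lies in the normal space so the projection is trivial) is left implicit in the paper but is the correct justification.
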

\begin{proof}
For each $\varepsilon \in \mathrm{U}(1), \alpha \in \Sigma_{\varepsilon}^{+}, 1\leq i\leq m(\alpha, \varepsilon) $, 
from
Theorem~\ref{thm-2ndff},
we have 
\begin{align*}
dL_{g}^{-1}B_{H}( dL_{g} Y_{\alpha, i}^{\varepsilon},  dL_{g} Y_{\alpha, i}^{\varepsilon})
=\cot (\langle \alpha, H \rangle +\varphi_{\varepsilon})[Y_{\alpha, i}^{\varepsilon}, X_{\alpha, i}^{\varepsilon}]^{\perp}
=-\cot (\langle \alpha, H \rangle +\varphi_{\varepsilon})\alpha.
\end{align*}
\end{proof}

Next, we consider austereness of orbits.
We review the definition of the austere submanifolds.
\begin{defn}[\cite{HL}]
Let $M$ be a submanifold of Riemannian manifold $\tilde{M}$. 
We denote the shape operator of $M$ by $A$. 
Then $M$ is called an austere submanifold if for each normal vector $\xi \in T^{\perp}_{x}$, 
the set of eigenvalues with their multiplicities of $A^{\xi}$ is invariant under the multiplication by $-1$.
\end{defn}
The notion of austere submanifolds are introduced in \cite{HL}.
By definition, 
we can see that austere submanifolds are minimal submanifolds.

Hereafter, 
we denote the shape operator of $K_{2} p_{1}\subset M_{1}$ with respect to $dL_{g}(\xi) \in T^{\perp}_{p_{1}} K_{2}p_{1}$ with the symbol $A^{\xi}$.

In order to investigate the eigenvalues of the shape operator $A^{\xi}$, 
we can assume $\xi \in \mathfrak{a}$ without loss of generality.

Because, 
the isotropy subgroup $(K_{2})_{p_{1}}$ of $K_{2}$ at $p_{1}$ is expressed as 
\begin{align*}
(K_{2})_{p_{1}}
=&\{k \in K_{2} \mid kp_{1}=p_{1}\}
=\{k \in K_{2} \mid kg\pi_{1} (e)=g\pi_{1}(e) \}\\
=&\{k \in K_{2} \mid g^{-1} k g \in K_{1} \}
=K_{2}\cap (gK_{1}g^{-1}).
\end{align*}
Since elements in $(K_{2})_{p_{1}}$ fix $p_{1}$, 
$(K_{2})_{p_{1}}$ has representation on $T^{\perp}_{p_{1}}K_{2}p_{1}$ by differential of the action.
Actually, 
for $k\in (K_{2})_{p_{1}}$ and $dL_{g}\xi \in T^{\perp}_{p_{1}}K_{2}p_{1}$, 
\begin{align*}
dL_{k}(dL_{g}\xi )
=&\left. \frac{d}{dt} k \mathrm{Exp}_{p_{1}}(t dL_{g}\xi) \right|_{t=0}
=\left. \frac{d}{dt} k g\exp (t\xi) K_{1} \right|_{t=0}\\
=&\left. \frac{d}{dt} g g^{-1}k g\exp (t\xi) g^{-1}k^{-1}g K_{1} \right|_{t=0}
=dL_{g}(\mathrm{Ad}(g^{-1}k g) \xi)
\end{align*} 
holds.
Thus, 
the representation of $(K_{2})_{p_{1}}$ on $T^{\perp}_{p_{1}}K_{2}p_{1}$
is equivalent to the adjoint representation of $g^{-1}(K_{2})_{p_{1}}g$ on $\mathfrak{m}_{1}\cap \mathrm{Ad}(g)^{-1}\mathfrak{m}_{2}$.
Moreover, 
the Lie algebra of $g^{-1}(K_{2})_{p_{1}}g$ is $\mathfrak{k}_{1}\cap \mathrm{Ad}(g)^{-1}\mathfrak{k}_{2}$, 
and 
$((\mathfrak{k}_{1}\cap \mathrm{Ad}(g)^{-1}\mathfrak{k}_{2})\oplus (\mathfrak{m}_{1}\cap \mathrm{Ad}(g)^{-1}\mathfrak{m}_{2}) , \theta_{1})$
is a symmetric orthogonal Lie algebra, 
we have 
\begin{align}\label{eq-slicerep}
\mathfrak{m}_{1}\cap \mathrm{Ad}(g)^{-1}\mathfrak{m}_{2}=\mathrm{Ad}(g^{-1}(K_{2})_{p_{1}}g) \mathfrak{a}.
\end{align}
Therefore, 
for all $dL_{g} \xi \in T^{\perp}_{p_{1}}K_{2}p_{1}$, there exist 
$\xi' \in \mathfrak{a}$ and $k\in K_{1}\cap (g^{-1}K_{2}g)$, 
such that 
\begin{align} \label{eq-slicerep2}
dL_{g}\xi =dL_{g}\mathrm{Ad}(k)\xi'.
\end{align}

From this fact and the relation between shape operator and second fundamental form, 
we can compute as follows;
\begin{align*}
\langle A^{\xi } X, Y\rangle
=&\langle B_{H}( X, Y), dL_{g}\xi\rangle
= \langle B_{H}( X, Y), dL_{g}\mathrm{Ad}(k)\xi' \rangle \\
=& \langle B_{H}( X, Y), dL_{g}dL_{k}\xi' \rangle
= \langle dL_{g}dL_{k^{-1}}dL_{g^{-1}}B_{H}( X, Y), dL_{g}\xi' \rangle \\
=& \langle dL_{gk^{-1}g^{-1}}B_{H}( X, Y), dL_{g}\xi' \rangle \\
= &\langle B_{H}(dL_{gk^{-1}g^{-1}}( X),dL_{gk^{-1}g^{-1}}( Y) ), dL_{g}\xi' \rangle\\
= &\langle A^{\xi'} dL_{gk^{-1}g^{-1}}( X), dL_{gk^{-1}g^{-1}}( Y)  \rangle
\end{align*}
for all $X, Y \in T_{p_{1}}K_{2}p_{1}$.
Hence, we have 
\begin{align*}
A^{\xi }=dL_{gkg^{-1}} A^{\xi'} dL_{gk^{-1}g^{-1}}.
\end{align*}
From the above equation, 
the eigenvalues with their multiplicity of $A^{\xi}$ and $A^{\xi'}$ coincides.

In the following, 
we investigate the eigenvalues of $A^{\xi}$ for each $\xi \in \mathfrak{a}$.
\begin{itemize}
\item 
For each $\varepsilon \in \mathrm{U}(1)$ and $X, Y \in V(0, \varepsilon) \cap \mathfrak{m}_{1}$, 
we have 
\begin{align*}
\langle A^{\xi} dL_{g}X, dL_{g}Y\rangle
=\langle B_{H}(dL_{g}X, dL_{g}Y), dL_{g}\xi \rangle
=0. 
\end{align*}
\item 
For $\varepsilon \in \mathrm{U}(1)$, $X\in V(0, \varepsilon) \cap \mathfrak{m}_{1}$ and 
$\delta \in \mathrm{U}(1), \alpha \in \Sigma_{\delta}^{+} \ (\langle \alpha, H\rangle +\varphi_{\delta} \not\in \pi \mathbb{Z}), 1\leq i\leq m(\alpha, \delta)$,
\begin{align*}
&\langle A^{\xi} dL_{g}X, dL_{g}Y_{\alpha, i}^{\delta} \rangle
=\langle  B_{H}(dL_{g}X, dL_{g}Y_{\alpha, i}^{\delta}) , dL_{g}(\xi) \rangle\\
=&\langle  \cot (\langle \alpha, H\rangle +\varphi_{\delta})dL_{g}[X, X_{\alpha, i}^{\delta}]^{\perp} , dL_{g}(\xi) \rangle\\
=&\langle  \cot (\langle \alpha, H\rangle +\varphi_{\delta})dL_{g}[X, X_{\alpha, i}^{\delta}] , dL_{g}(\xi) \rangle\\
=&\langle  \cot (\langle \alpha, H\rangle +\varphi_{\delta})[X, X_{\alpha, i}^{\delta}] , \xi \rangle\\
=&-\langle  \cot (\langle \alpha, H\rangle +\varphi_{\delta}) X_{\alpha, i}^{\delta} ,[X, \xi] \rangle
=0
\end{align*} 
holds.
\item 
For $\varepsilon \in \mathrm{U}(1), \alpha \in \Sigma_{\varepsilon}^{+} \ (\langle \alpha, H\rangle +\varphi_{\varepsilon} \not\in \pi \mathbb{Z}), 1\leq i\leq m(\alpha, \varepsilon)$ and
$\delta \in \mathrm{U}(1), \beta \in \Sigma_{\delta}^{+} \ (\langle \beta, H\rangle +\varphi_{\delta} \not\in \pi \mathbb{Z}), 1\leq j\leq m(\beta, \delta)$,
\begin{align*}
&\langle A^{\xi} dL_{g}Y_{\alpha, i}^{\varepsilon }, dL_{g}Y_{\beta, j}^{\delta} \rangle
=\langle  B_{H}(dL_{g}Y_{\alpha, i}^{\varepsilon }, dL_{g}Y_{\beta, j}^{\delta} ), dL_{g}(\xi)\rangle\\
=&\langle  \cot (\langle \beta, H\rangle +\varphi_{\delta})dL_{g} [Y_{\alpha, i}^{\varepsilon }, X_{\beta, j}^{\delta} ]^{\perp}, dL_{g}(\xi)\rangle\\
=&\langle  \cot (\langle \beta, H\rangle +\varphi_{\delta}) [Y_{\alpha, i}^{\varepsilon }, X_{\beta, j}^{\delta} ], \xi\rangle\\
=&\cot (\langle \beta, H\rangle +\varphi_{\delta})\langle   Y_{\alpha, i}^{\varepsilon }, [X_{\beta, j}^{\delta} , \xi ]\rangle\\
=&\cot (\langle \beta, H\rangle +\varphi_{\delta})\langle   Y_{\alpha, i}^{\varepsilon },  - \langle \beta,  \xi \rangle Y_{\beta, j}^{\delta} \rangle\\
=&- \langle \beta,  \xi \rangle\cot (\langle \beta, H\rangle +\varphi_{\delta})\langle   Y_{\alpha, i}^{\varepsilon },  Y_{\beta, j}^{\delta} \rangle
\end{align*}
holds.
\end{itemize}
From the above computation, 
for each $\xi \in \mathfrak{a}$,  
$A^{\xi} dL_{g}X=0 $ for 
$\varepsilon \in \mathrm{U}(1), X\in V(0, \varepsilon) \cap \mathfrak{m}_{1}$
and
$A^{\xi} dL_{g}Y_{\alpha, i}^{\varepsilon} =- \langle \alpha,  \xi \rangle\cot (\langle \alpha, H\rangle +\varphi_{\varepsilon})Y_{\alpha, i}^{\varepsilon}$
for  
$\varepsilon \in \mathrm{U}(1), \alpha \in \Sigma_{\varepsilon}^{+} \ (\langle \alpha, H\rangle +\varphi_{\varepsilon} \not\in \pi \mathbb{Z}), 1\leq i\leq m(\alpha, \varepsilon)$
holds. 

Therefore, 
The set of eigenvalues of $A^{\xi}$ is 
\begin{align*}
&\{0 \quad (\text{multiplicity}=m )\}\\
\cup&
\{ - \langle \alpha,  \xi \rangle\cot (\langle \alpha, H\rangle +\varphi_{\varepsilon})  \quad (\text{multiplicity}=m(\alpha, \varepsilon) ) \\
& \qquad \qquad\mid  \varepsilon \in \mathrm{U}(1), \alpha \in \Sigma_{\varepsilon}^{+} \ (\langle \alpha, H\rangle +\varphi_{\varepsilon} \not\in \pi \mathbb{Z})\},
\end{align*}
where $m=\dim \left( (V(0, -1)\cap \mathfrak{m}_{1} )\oplus \sum_{\varepsilon \in \mathrm{U}(1) \atop \mathrm{Im}(\varepsilon)>0} (V(0, \varepsilon) \cap \mathfrak{m}_{1})\right) $.

It is known that the following lemma.
\begin{lem}[\cite{IST2}, p.459]\label{lem-ist}
Let $A$ be a finite subset of metric vector space $(\mathfrak{a}, \langle , \rangle )$.
Then (i) and (ii) are equivalent.
\begin{enumerate}
\item[(i)] For any $\xi \in \mathfrak{a}$, the set $\{ \langle a , \xi \rangle \mid a\in A\}$ with multiplicity is invariant under the multiplication by $-1$.
\item[(ii)] The set $A$ is invariant under the multiplication by $-1$.
\end{enumerate} 
\end{lem}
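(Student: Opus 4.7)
The implication (ii)$\Rightarrow$(i) is immediate: if $A=-A$ as multisets, then $a\mapsto -a$ is a bijection of $A$ with itself sending $\langle a,\xi\rangle$ to $-\langle a,\xi\rangle$, so the multiset $\{\langle a,\xi\rangle\mid a\in A\}$ coincides with its negation.

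For the nontrivial direction (i)$\Rightarrow$(ii), my plan is to encode the multiset $A$ as a complex exponential sum and exploit linear independence of characters of the additive group $\mathfrak{a}$. Enumerate the distinct elements of $A$ as $b_1,\ldots,b_m$ with multiplicities $n_1,\ldots,n_m$, and set
\[
f(\xi)=\sum_{a\in A}e^{\sqrt{-1}\langle a,\xi\rangle}=\sum_{j=1}^{m}n_j\, e^{\sqrt{-1}\langle b_j,\xi\rangle}.
\]
Condition (i) asserts that for every $\xi\in\mathfrak{a}$ the multisets $\{\langle a,\xi\rangle\}_{a\in A}$ and $\{-\langle a,\xi\rangle\}_{a\in A}$ coincide, so $f(\xi)=\overline{f(\xi)}$ and hence
\[
\sum_{j=1}^{m}n_j\, e^{\sqrt{-1}\langle b_j,\xi\rangle}=\sum_{j=1}^{m}n_j\, e^{\sqrt{-1}\langle -b_j,\xi\rangle}\qquad (\xi\in\mathfrak{a}).
\]

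To finish I would invoke the standard fact that, for pairwise distinct points $c_1,\ldots,c_N\in\mathfrak{a}$, the characters $\xi\mapsto e^{\sqrt{-1}\langle c_k,\xi\rangle}$ are linearly independent over $\mathbb{C}$. Collecting like exponentials in the displayed identity (merging any $b_j$ with $-b_k$ whenever $b_j=-b_k$) and equating coefficients shows that for each $j$ there exists $k$ with $b_k=-b_j$ and $n_k=n_j$; this is precisely the assertion $A=-A$ as multisets, i.e.\ (ii).

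The main obstacle I anticipate is bookkeeping in the merging step, which is automatic when $A$ contains neither $0$ nor any pair $\pm b$ but needs a moment's care in general. A self-contained real alternative would be to consider the power sums $p_k(\xi)=\sum_{a\in A}\langle a,\xi\rangle^k$: condition (i) forces $p_k\equiv 0$ for every odd $k$, and Newton's identities applied to $\prod_{a\in A}(t-\langle a,\xi\rangle)$ then make the odd elementary symmetric functions vanish identically in $\xi$, from which one recovers $A=-A$ by polarization. I expect the exponential-sum route to be shorter.
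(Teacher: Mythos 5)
Your proposal is correct. Note first that the paper does not prove this lemma at all: it is quoted from \cite{IST2} (p.~459) without proof, so there is no internal argument to compare against. Both directions of your argument are sound. The implication (ii)$\Rightarrow$(i) is indeed immediate, and for (i)$\Rightarrow$(ii) the exponential-sum encoding works: since the multiset $\{\langle a,\xi\rangle\}_{a\in A}$ equals its negation for every $\xi$, you get $\sum_j n_j e^{\sqrt{-1}\langle b_j,\xi\rangle}=\sum_j n_j e^{\sqrt{-1}\langle -b_j,\xi\rangle}$ identically in $\xi$, and Dedekind's theorem on linear independence of distinct characters of $(\mathfrak{a},+)$ (the characters $\xi\mapsto e^{\sqrt{-1}\langle c,\xi\rangle}$ are pairwise distinct for distinct $c$ because the inner product is nondegenerate) forces $\operatorname{mult}_A(c)=\operatorname{mult}_A(-c)$ for every $c$; the bookkeeping you worry about is harmless, since the cases $b_j=0$ and $b_j=-b_k$ just produce the identity $n_j=n_j$ or the desired equality of multiplicities directly. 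For the record, the argument in \cite{IST2} is more elementary and avoids characters: one picks a generic $\xi$ lying off the finitely many hyperplanes $\langle a-a',\xi\rangle=0$ and $\langle a+a',\xi\rangle=0$ (for $a\neq a'$ and $a\neq -a'$ respectively), so that $\langle a,\xi\rangle=-\langle a',\xi\rangle$ can only happen when $a'=-a$; the hypothesis at this single $\xi$ then pairs each $a\in A$ with $-a\in A$ with equal multiplicity. Your route is slightly heavier machinery for the same conclusion, but it is complete and correct; your alternative via odd power sums and Newton's identities would also work, though the final step (recovering $A=-A$ from the identical vanishing of the odd elementary symmetric functions of the linear forms $\langle a,\xi\rangle$) is the least detailed part of your write-up and would need unique factorization of $\prod_a\bigl(t-\langle a,\xi\rangle\bigr)$ in $\mathbb{R}[t,\xi]$ or a genericity argument to close.
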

From Lemma~\ref{lem-ist} we have the following proposition, 
\begin{prop} \label{prop-austere}
The orbit $K_{2}p_{1} \subset M_{1}$ is austere if and only if 
the set 
\begin{align*}
E_{H}
&=\{ - \cot (\langle \alpha, H\rangle +\varphi_{\varepsilon}) \alpha  \quad (\text{multiplicity}=m(\alpha, \varepsilon) ) \\
& \qquad \qquad\mid  \varepsilon \in \mathrm{U}(1), \alpha \in \Sigma_{\varepsilon}^{+} \ (\langle \alpha, H\rangle +\varphi_{\varepsilon} \not\in \pi \mathbb{Z})\}
\end{align*}
is invariant under the multiplication by $-1$.
\end{prop}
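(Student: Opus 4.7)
The plan is to assemble three ingredients that are already in place just before the statement: the explicit spectrum of $A^{\xi}$ for $\xi \in \mathfrak{a}$, the slice-representation reduction that lets us restrict attention to $\xi \in \mathfrak{a}$, and Lemma~\ref{lem-ist}. No genuinely new calculation is required; the proof is a short combinatorial assembly.

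First, I would argue that austereness can be tested on normal vectors in $\mathfrak{a}$. This is exactly the content of the discussion around equations~(\ref{eq-slicerep}) and (\ref{eq-slicerep2}): for any $dL_g\xi \in T^{\perp}_{p_1}K_2p_1$ there exist $\xi' \in \mathfrak{a}$ and $k \in K_1 \cap (g^{-1}K_2 g)$ with $dL_g\xi = dL_g\operatorname{Ad}(k)\xi'$, and the identity $A^{\xi} = dL_{gkg^{-1}} A^{\xi'} dL_{gk^{-1}g^{-1}}$ shows $A^{\xi}$ and $A^{\xi'}$ are conjugate by an isometry. Hence their eigenvalues with multiplicities coincide, and $K_2p_1$ is austere iff the spectrum of $A^{\xi'}$ is $\pm$-invariant for every $\xi' \in \mathfrak{a}$.

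Second, I would plug in the spectrum computed in the paragraphs immediately preceding the statement. For $\xi \in \mathfrak{a}$ the shape operator $A^{\xi}$ kills the subspace $(V(0,-1)\cap \mathfrak{m}_1) \oplus \sum_{\operatorname{Im}\varepsilon>0}(V(0,\varepsilon)\cap \mathfrak{m}_1)$, contributing the eigenvalue $0$ with multiplicity $m$, and on the remaining directions $dL_g Y_{\alpha,i}^{\varepsilon}$ acts as multiplication by $-\langle\alpha,\xi\rangle \cot(\langle\alpha,H\rangle+\varphi_{\varepsilon})$, i.e.\ by $\langle a,\xi\rangle$ where $a = -\cot(\langle\alpha,H\rangle+\varphi_{\varepsilon})\alpha \in E_H$, each such eigenvalue appearing with multiplicity $m(\alpha,\varepsilon)$. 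Since $0$ is fixed by $\lambda\mapsto -\lambda$, the block of $m$ zero eigenvalues is automatically $\pm$-invariant, so the full spectrum is $\pm$-invariant iff the multiset $\{\langle a,\xi\rangle \mid a\in E_H\}$ (with multiplicities $m(\alpha,\varepsilon)$) is.

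Finally, I would apply Lemma~\ref{lem-ist} to the finite multiset $E_H \subset \mathfrak{a}$: the condition that $\{\langle a,\xi\rangle \mid a\in E_H\}$ be $\pm$-invariant for every $\xi \in \mathfrak{a}$ is equivalent to $E_H$ itself being $\pm$-invariant. Chaining this with the first two steps gives the stated equivalence. The only subtlety is the bookkeeping around the extra zero eigenvalues of multiplicity $m$ and the possibility that some $\langle a, \xi\rangle$ vanish; both are handled automatically because $-0 = 0$ and because Lemma~\ref{lem-ist} is stated for multisets. I do not foresee any substantive obstacle, since all the hard work was carried out in Theorem~\ref{thm-2ndff} and the shape-operator computation that followed it.
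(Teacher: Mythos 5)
Your proposal is correct and follows essentially the same route as the paper: the paper likewise derives Proposition~\ref{prop-austere} directly from the slice-representation reduction to $\xi\in\mathfrak{a}$, the eigenvalue computation for $A^{\xi}$ carried out just before the statement, and Lemma~\ref{lem-ist}. Your explicit remarks about the zero-eigenvalue block and the multiset formulation are exactly the (implicit) bookkeeping the paper relies on, so there is nothing to add.
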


Next, 
we consider a condition for the orbit $K_{2}p_{1}\subset M_{1}$ to be totally geodesic submanifold.
A necessary and sufficient condition for a Hermann action has totally geodesic orbits is given by an algebraic condition of  $\theta_{1}$ and $ \theta_{2}$ (cf. Theorem~\ref{thm-iff-tot-geod}).
To explain this fact, we define the notion of irreducibility of a triple $(\mathfrak{g}, \theta_{1}, \theta_{2})$.
\begin{defn}
A triple $(\mathfrak{g}, \theta_{1}, \theta_{2})$ is called irreducible,
if there is no non-torivial ideal $\mathfrak{g}_{0}$ which satisfies 
$\theta_{1}(\mathfrak{g}_{0})=\theta_{2}(\mathfrak{g}_{0})=\mathfrak{g}_{0}$.
\end{defn}
In \cite{M2}, 
irreducible $(\mathfrak{g}, \theta_{1}, \theta_{2})$ are classified. 
When  $\mathfrak{g}$ is semisimple, 
we can decomposed as 
\begin{align*}
\mathfrak{g}=\mathfrak{f}_{1}\oplus \cdots \oplus \mathfrak{f}_{l},\
\theta_{1}=\sigma_{1}\oplus \cdots \oplus \sigma_{l},\ 
\theta_{2}=\tau_{1}\oplus \cdots \oplus \tau_{l}
\end{align*}
where 
$(\mathfrak{f}_{i}, \sigma_{i}, \tau_{i})\ (i=1, \ldots, l)$ are irreducible symmetric triads.

From Theorem~\ref{thm-2ndff}, 
we can see the following lemma.
\begin{lem}\label{lem-iff-tot-geod}
The orbit $K_{2}p_{1} \subset M_{1}$ is totally geodesic if and only if 
$\langle \alpha, H \rangle +\varphi_{\varepsilon} \in \frac{\pi}{2} \mathbb{Z}$ 
holds for all $\varepsilon \in \mathrm{U}(1)$ and $\alpha \in \Sigma_{\varepsilon}^{+}$.
\end{lem}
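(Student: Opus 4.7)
The plan is to use Theorem~\ref{thm-2ndff} as the main tool, since the orbit $K_2p_1$ is totally geodesic if and only if its second fundamental form $B_H$ vanishes identically. Three formulas in Theorem~\ref{thm-2ndff} cover all pairs of tangent vectors (after the decomposition of $dL_g^{-1}T_{p_1}K_2p_1$ into the $V(0,\varepsilon)\cap\mathfrak{m}_1$ part and the $V(\alpha,\varepsilon)\cap\mathfrak{m}_1$ parts with $\langle\alpha,H\rangle+\varphi_\varepsilon\notin\pi\mathbb{Z}$), so I reduce the lemma to analyzing when each of these vanishes.

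For the \emph{if} direction, suppose $\langle\alpha,H\rangle+\varphi_\varepsilon\in\tfrac{\pi}{2}\mathbb{Z}$ for every $\varepsilon\in\mathrm{U}(1)$ and $\alpha\in\Sigma_\varepsilon^+$. I split each such root datum into two cases. If $\langle\alpha,H\rangle+\varphi_\varepsilon\in\pi\mathbb{Z}$, then the corresponding $V(\alpha,\varepsilon)\cap\mathfrak{m}_1$ sits in the \emph{normal} space (by the description of $dL_g^{-1}T^\perp_{p_1}K_2p_1$ obtained just before Theorem~\ref{thm-2ndff}), so the vectors $Y_{\alpha,i}^\varepsilon$ never enter the formulas of parts (2)--(3). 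Otherwise, $\langle\alpha,H\rangle+\varphi_\varepsilon\in\tfrac{\pi}{2}+\pi\mathbb{Z}$, which forces $\cot(\langle\alpha,H\rangle+\varphi_\varepsilon)=0$; then parts (2) and (3) of Theorem~\ref{thm-2ndff} vanish. Part (1) always gives zero. Hence $B_H\equiv0$, and the orbit is totally geodesic.

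For the \emph{only if} direction, the main step is to extract the condition from part (3) of Theorem~\ref{thm-2ndff} specialized to $\beta=\alpha$, $\delta=\varepsilon$, $j=i$. Here $B_H(dL_gY_{\alpha,i}^\varepsilon,dL_gY_{\alpha,i}^\varepsilon)=\cot(\langle\alpha,H\rangle+\varphi_\varepsilon)\,dL_g\bigl([Y_{\alpha,i}^\varepsilon,X_{\alpha,i}^\varepsilon]^\perp\bigr)$, and Lemma~\ref{onb}(1) gives $[Y_{\alpha,i}^\varepsilon,X_{\alpha,i}^\varepsilon]=-\alpha$. Since $\alpha\in\mathfrak{a}$ and $\mathfrak{a}\subset dL_g^{-1}T^\perp_{p_1}K_2p_1$, we have $(-\alpha)^\perp=-\alpha\neq0$. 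Consequently $B_H=0$ forces $\cot(\langle\alpha,H\rangle+\varphi_\varepsilon)=0$, i.e.\ $\langle\alpha,H\rangle+\varphi_\varepsilon\in\tfrac{\pi}{2}+\pi\mathbb{Z}$, for every $(\alpha,\varepsilon)$ with $\langle\alpha,H\rangle+\varphi_\varepsilon\notin\pi\mathbb{Z}$. Combined with the automatically-satisfied case $\langle\alpha,H\rangle+\varphi_\varepsilon\in\pi\mathbb{Z}$, this yields $\langle\alpha,H\rangle+\varphi_\varepsilon\in\tfrac{\pi}{2}\mathbb{Z}$ in general.

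There is no real obstacle; the only small care required is to notice that the distinction $\frac{\pi}{2}\mathbb{Z} = \pi\mathbb{Z}\,\cup\,(\frac{\pi}{2}+\pi\mathbb{Z})$ corresponds precisely to the two mechanisms by which the contributions of Theorem~\ref{thm-2ndff}(2)--(3) vanish (either the vector $Y_{\alpha,i}^\varepsilon$ is not tangent at all, or $\cot$ kills the term), and to verify that the nontrivial normal component $\alpha\in\mathfrak{a}$ produced by $[X_{\alpha,i}^\varepsilon,Y_{\alpha,i}^\varepsilon]=\alpha$ genuinely obstructs total geodesy in the other direction.
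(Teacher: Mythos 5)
Your proposal is correct and is essentially the paper's own argument: the paper states Lemma~\ref{lem-iff-tot-geod} with only the remark ``From Theorem~\ref{thm-2ndff}, we can see the following lemma,'' and your write-up fills in exactly the intended details, with the key step $dL_g^{-1}B_H(dL_gY_{\alpha,i}^{\varepsilon},dL_gY_{\alpha,i}^{\varepsilon})=-\cot(\langle\alpha,H\rangle+\varphi_{\varepsilon})\alpha\neq 0$ being the same computation the paper itself carries out in the corollary on the mean curvature vector. No gaps.
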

By using Lemma~\ref{lem-iff-tot-geod}, 
we can prove the following theorem. 
\begin{thm}\label{thm-iff-tot-geod}
Let 
$(\mathfrak{g}, \theta_{1}, \theta_{2})$ be irreducible.
We suppose $\mathfrak{a}\neq \{0\}$. 
Then 
the Hermann action $K_{2}\curvearrowright M_{1}$ has a totally geodesic orbit 
if and only if 
$(\theta_{1}\mathrm{Ad}(g) \theta_{2}\mathrm{Ad}(g)^{-1})^{2}=\mathrm{id}$
holds for some element $g\in G$.
\end{thm}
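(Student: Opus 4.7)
The plan is to use Lemma~\ref{lem-iff-tot-geod} as a bridge and verify the equivalence by a direct computation on each real subspace $V(\alpha, \varepsilon)$ via Lemma~\ref{onb}.

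By Matsuki's decomposition $G = K_2 \exp(\mathfrak{a}) K_1$, every $K_2$-orbit in $M_1$ has a representative of the form $\pi_1(\exp H)$ with $H \in \mathfrak{a}$. Thus the existence of a totally geodesic orbit is equivalent, by Lemma~\ref{lem-iff-tot-geod}, to the existence of $H \in \mathfrak{a}$ satisfying $\langle \alpha, H\rangle + \varphi_\varepsilon \in \frac{\pi}{2}\mathbb{Z}$ for every $\varepsilon \in \mathrm{U}(1)$ and $\alpha \in \Sigma_\varepsilon^+$. On the algebraic side, noting that the existence of $g$ with $(\theta_1 \mathrm{Ad}(g)\theta_2 \mathrm{Ad}(g)^{-1})^2 = \mathrm{id}$ is equivalent, via the substitution $g \to g^{-1}$, to the existence of $g$ with $(\theta_1 \mathrm{Ad}(g)^{-1}\theta_2 \mathrm{Ad}(g))^2 = \mathrm{id}$, I would work with the latter form so that signs align with the lemma.

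For the forward direction, I would take such an $H$ and set $g = \exp H$. Using $\theta_i(H) = -H$ (since $H \in \mathfrak{a} \subset \mathfrak{m}_1 \cap \mathfrak{m}_2$), a direct simplification gives
\[
\theta_1\,\mathrm{Ad}(g)^{-1}\theta_2\,\mathrm{Ad}(g) = \mathrm{Ad}(\exp(2H))\,\theta_1\theta_2.
\]
Since $\theta_1\theta_2$ fixes $\mathfrak{a}$ pointwise, the two factors commute, yielding
\[
(\theta_1\,\mathrm{Ad}(g)^{-1}\theta_2\,\mathrm{Ad}(g))^2 = \mathrm{Ad}(\exp(4H))\,(\theta_1\theta_2)^2.
\]
By Lemma~\ref{onb}, on each $V(\alpha, \varepsilon)$ with $\alpha \in \Sigma_\varepsilon^+$ the right-hand side acts as a planar rotation by $4\langle\alpha, H\rangle + 4\varphi_\varepsilon$, which is the identity exactly when $\langle\alpha, H\rangle + \varphi_\varepsilon \in \frac{\pi}{2}\mathbb{Z}$, that is, under the root-condition of Lemma~\ref{lem-iff-tot-geod}; on $V(0, \varepsilon)$ the rotation reduces to $(\theta_1\theta_2)^2$, which is the identity iff $\varepsilon^2 = 1$.

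For the converse, suppose $(\theta_1 \mathrm{Ad}(g_0)\theta_2 \mathrm{Ad}(g_0)^{-1})^2 = \mathrm{id}$ for some $g_0 \in G$; after replacing $g_0$ by $g_0^{-1}$ we may assume $\theta_1$ commutes with $\tau := \mathrm{Ad}(g_0)^{-1}\theta_2\,\mathrm{Ad}(g_0)$. The subgroup $g_0^{-1} K_2 g_0$ has Lie algebra $\mathfrak{g}^\tau = \mathrm{Ad}(g_0)^{-1}\mathfrak{k}_2$, which by the commuting-involution decomposition splits as $(\mathfrak{g}^\tau \cap \mathfrak{k}_1) \oplus (\mathfrak{g}^\tau \cap \mathfrak{m}_1)$. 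The summand $\mathfrak{g}^\tau \cap \mathfrak{m}_1$ is a Lie triple system in $\mathfrak{m}_1$ (its triple bracket stays inside $\mathfrak{g}^\tau \cap \mathfrak{m}_1$ by $\theta_1$-decomposition), so the orbit of $g_0^{-1}K_2 g_0$ through $\pi_1(e)$ is totally geodesic. Applying the isometry $L_{g_0}$ of $M_1$ yields a totally geodesic $K_2$-orbit through $\pi_1(g_0)$.

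The hard part will be the subspaces $V(0, \varepsilon)$ with $\varepsilon^2 \neq 1$: the algebraic condition demands $(\theta_1\theta_2)^2|_{V(0, \varepsilon)} = \mathrm{id}$ on each such piece, whereas the root-condition of Lemma~\ref{lem-iff-tot-geod} imposes no constraint on $V(0, \cdot)$ at all. Reconciling these is where the irreducibility hypothesis together with $\mathfrak{a} \neq \{0\}$ must enter: I would appeal to Matsuki's classification~\cite{M2} of irreducible triples $(\mathfrak{g}, \theta_1, \theta_2)$, either to verify case-by-case that $V(0, \varepsilon) = \{0\}$ for $\varepsilon^2 \neq 1$ in all relevant cases, or to show structurally that a non-trivial $V(0, \varepsilon)$ with $\varepsilon^2 \neq 1$ would yield a proper $\theta_1, \theta_2$-invariant ideal of $\mathfrak{g}$, contradicting irreducibility.
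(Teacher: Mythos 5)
Your overall architecture matches the paper's: reduce via $G=K_{2}\exp(\mathfrak{a})K_{1}$ and Lemma~\ref{lem-iff-tot-geod} to the root condition, and observe that for $g=\exp H$ one has $(\theta_{1}\mathrm{Ad}(g)^{-1}\theta_{2}\mathrm{Ad}(g))^{2}=\mathrm{Ad}(\exp 4H)(\theta_{1}\theta_{2})^{2}$, which acts on $\mathfrak{g}(\alpha,\varepsilon)$ by $e^{4\sqrt{-1}(\langle\alpha,H\rangle+\varphi_{\varepsilon})}$. But the forward implication is not actually proved in your proposal: you correctly identify that the condition says nothing about $\mathfrak{g}(0)$ and only sketch two candidate fixes. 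The paper closes this with (a clean form of) your second option, and no classification is needed: the subalgebra $\mathfrak{f}$ of $\mathfrak{g}^{\mathbb{C}}$ generated by $\sum_{\alpha\in\tilde{\Sigma}}\mathfrak{g}(\alpha)$ is an ideal invariant under $\theta_{1}$ and $\theta_{2}$, nonzero because $\mathfrak{a}\neq\{0\}$, so irreducibility forces $\mathfrak{f}=\mathfrak{g}^{\mathbb{C}}$; since $(\theta_{1}\mathrm{Ad}(\exp H)^{-1}\theta_{2}\mathrm{Ad}(\exp H))^{2}$ is an automorphism equal to the identity on every $\mathfrak{g}(\alpha)$, $\alpha\in\tilde{\Sigma}$, it is the identity on all of $\mathfrak{g}^{\mathbb{C}}$ (and the vanishing of $V(0,\varepsilon)$ for $\varepsilon^{2}\neq 1$ falls out as a consequence rather than needing separate verification). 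This argument, or one equivalent to it, must be supplied.

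In the converse, the step ``$\mathfrak{g}^{\tau}\cap\mathfrak{m}_{1}$ is a Lie triple system, so the orbit of $g_{0}^{-1}K_{2}g_{0}$ through $\pi_{1}(e)$ is totally geodesic'' is a non sequitur: a Lie triple system only guarantees that some totally geodesic submanifold is tangent to the orbit at that point, not that the orbit coincides with it. (A small circle in $S^{2}$ is an $\mathrm{SO}(2)$-orbit whose tangent space is trivially a Lie triple system, yet it is not totally geodesic.) The ingredient that actually does the work is the splitting $\mathfrak{g}^{\tau}=(\mathfrak{g}^{\tau}\cap\mathfrak{k}_{1})\oplus(\mathfrak{g}^{\tau}\cap\mathfrak{m}_{1})$ itself: by Lemma~\ref{lem-nabla}, for $X,Y\in\mathfrak{g}^{\tau}$ the second fundamental form at $\pi_{1}(e)$ is the normal part of $-[X_{\mathfrak{m}_{1}},Y]_{\mathfrak{m}_{1}}$, and since $X_{\mathfrak{m}_{1}}\in\mathfrak{g}^{\tau}$ this bracket lies in $\mathfrak{g}^{\tau}\cap\mathfrak{m}_{1}$, the tangent space of the orbit, so the normal component vanishes; homogeneity then kills the second fundamental form everywhere. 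The paper instead argues globally: the commuting involution $\theta_{2}'=\mathrm{I}_{g}\theta_{2}\mathrm{I}_{g}^{-1}$ preserves $K_{1}$ and induces an involutive isometry $\sigma$ of $M_{1}$ that fixes the orbit $L_{g}(K_{2}\pi_{1}(g^{-1}))$ pointwise and acts as $-\mathrm{id}$ on its normal space at $\pi_{1}(e)$, so the orbit is reflective, hence totally geodesic. Either repair is short, but as written your converse has a genuine gap.
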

\begin{proof}
First, we suppose 
the Hermann action $K_{2}\curvearrowright M_{1}$ has a totally geodesic orbit.
Then, since 
$G=K_{2}\exp(\mathfrak{a}) K_{1}$, 
there exists $H \in \mathfrak{a}$, such that 
$K_{2}\pi_{1}(\exp(H)) \subset M_{1}$ is totally geodesic. 
From  Lemma~\ref{lem-iff-tot-geod}, 
for each $\varepsilon \in \mathrm{U}(1), \alpha \in \Sigma_{\varepsilon}^{+}$,  
$\langle \alpha, H \rangle +\varphi_{\varepsilon} \in \frac{\pi}{2}\mathbb{Z}$ holds.

Moreover, for $\varepsilon, \delta \in \mathrm{U}(1)$ satisfying $\mathfrak{g}(\alpha, \varepsilon)\neq \{0\}$ and 
$\mathfrak{g}(\alpha, \delta )\neq \{0\}$, 
we have
\begin{align*}
&\langle \alpha, H \rangle +\varphi_{\varepsilon} \in \frac{\pi}{2}\mathbb{Z},  \quad \langle \alpha, H \rangle +\varphi_{\delta} \in \frac{\pi}{2}\mathbb{Z}
\end{align*}
Thus we have 
\begin{align*}
\varphi_{\varepsilon} - \varphi_{\delta} \in \frac{\pi}{2}\mathbb{Z}.
\end{align*}
Since $\varphi_{\varepsilon}, \varphi_{\delta}\in (-\pi /2 , \pi/2]$, 
we get 
$\varphi_{\varepsilon} - \varphi_{\delta}= -\pi /2,\ 0,\ \pi /2, $ and 
$\varepsilon =e^{2\sqrt{-1}\varphi_{\varepsilon}}, \delta =e^{2\sqrt{-1}\varphi_{\delta}}$.
Thus 
$\varepsilon \delta^{-1} =\pm1$ holds.
This means that 
$\varepsilon=\pm \delta$. 
Therefore, 
for each 
$\varepsilon \in \mathrm{U}(1), \alpha \in \Sigma_{\varepsilon}$, 
we have $\mathfrak{g}(\alpha)=\mathfrak{g}(\alpha, \varepsilon)\oplus \mathfrak{g}(\alpha, -\varepsilon)$.
Thus for all 
$X\in \mathfrak{g}(\alpha)$, 
\begin{align*}
(\theta_{1}\theta_{2})^2 X=\varepsilon^{2}X.
\end{align*}
holds.

On the other hand, 
for all $X \in \mathfrak{g}(\alpha, \pm \varepsilon)$, we have  
\begin{align*}
\mathrm{Ad}(\exp H)^{4}X= \exp (\pm \sqrt{-1}\langle \alpha, 4H\rangle ) X.
\end{align*}
Since 
$\langle \alpha, H \rangle +\varphi_{\varepsilon} \in \frac{\pi}{2}\mathbb{Z}$, 
we have 
\begin{align*}
\mathrm{Ad}(\exp H)^{-4}X= \varepsilon^{2}X.
\end{align*}
Thus, 
we have 
\begin{align*}
(\theta_{1}\theta_{2})^2 \mathrm{Ad}(\exp H)^{4}X=(\theta_{1} \mathrm{Ad}(\exp H)^{-1} \theta_{2} \mathrm{Ad}(\exp H) )^{2}X=X.
\end{align*}

By the above arguments, 
for each $\varepsilon \in \mathrm{U}(1), \alpha \in \Sigma_{\varepsilon}^{+}$ and $X \in \mathfrak{g}(\alpha, \pm \varepsilon)$, 
$(\theta_{1} \mathrm{Ad}(\exp H)^{-1} \theta_{2} \mathrm{Ad}(\exp H) )^{2} (X)=X$
holds.

Denote by $\mathfrak{f}$ the subalgebra generated by $\sum_{\alpha \in \tilde{\Sigma} }\mathfrak{g}(\alpha)$.
Then $\mathfrak{f}$ is an ideal of $\mathfrak{g}^{\mathbb{C}}$ and $\theta_{i}(\mathfrak{f})=\mathfrak{f}\ (i=1,2 )$ holds. 
Since $(\mathfrak{g}, \theta_{1}, \theta_{2})$ is irreducible, 
$\mathfrak{f}=\mathfrak{g}^{\mathbb{C}}$.
This means that 
the automorphism $(\theta_{1} \mathrm{Ad}(\exp H)^{-1} \theta_{2} \mathrm{Ad}(\exp H) )^{2}$ is the identity map on $\mathfrak{g}^{\mathbb{C}}$.
When we set $g=\exp (-H)$, we have 
$(\theta_{1}\mathrm{Ad}(g) \theta_{2}\mathrm{Ad}(g)^{-1})^{2}=\mathrm{id}$.

Next, we suppose 
$(\theta_{1}\mathrm{Ad}(g) \theta_{2}\mathrm{Ad}(g)^{-1})^{2}=\mathrm{id}$
holds for some element $g\in G$.
Then $(\theta_{1}\mathrm{I}_{g} \theta_{2}\mathrm{I}_{g}^{-1})^{2}=\mathrm{id}$ holds on $G$ 
where $\mathrm{I}_{g}(h) =ghg^{-1} \ (h \in G)$.
We set $\theta_{2}'=\mathrm{I}_{g} \theta_{2}\mathrm{I}_{g}^{-1}$.
Then $\theta_{2}'$ is an involutive automorphism on $G$ which satisfies $\theta_{2}'(K_{1}) =K_{1}$.
Hence, $\theta_{2}'$ induces an involutive isometry $\sigma$ on $M_{1}$.
For each $g\pi_{1}(k_{2}g^{-1}) \in L_{g}(K_{2}\pi_{1}(g^{-1}))$, 
we can see that
\begin{align*}
\sigma (g\pi_{1}(k_{2}g^{-1}))
&=\pi_{1}(\mathrm{I}_{g} \theta_{2}\mathrm{I}_{g}^{-1}(gk_{2}g^{-1}))
=\pi_{1}(\mathrm{I}_{g} \theta_{2}(k_{2}))\\
&=\pi_{1}(\mathrm{I}_{g} (k_{2}))
=\pi_{1}(gk_{2} g^{-1})
=g\pi_{1}(k_{2}g^{-1}).
\end{align*}
Moreover, by definition of $\sigma$,  
we have $(d\sigma)_{\pi_{1}(e)} (\xi )=-\xi$
for each normal vector $\xi \in T^{\perp}_{\pi_{1}(e) } L_{g}(K_{2}\pi_{1}(g^{-1})) =\mathfrak{m}_{1}\cap \mathrm{Ad}(g) \mathfrak{m}_{2}$.
Therefore, $L_{g}(K_{2}\pi_{1}(g^{-1})$
is a reflective submanifold of $M_{1}$.
Since $L_{g}$ is an isometry, 
the orbit $K_{2}\pi_{1}(g)$ of the action 
$K_{2}\curvearrowright M_{1}$ is a reflective submanifold.
Therefore, $K_{2}\pi_{1}(g)$ is totally geodesic submanifold of $M_{1}$.
\end{proof}

\subsection{Weakly reflective submanifolds and arid submanifolds}\label{sect-wr}
The notion of weakly reflective submanifold which is introduced in \cite{IST2} is a generalization of the notion of reflective submanifold.
The notion of arid submanifold is introduced in \cite{Taketomi} as a generalization of the notion of  weakly reflective submanifold.
Arid submanifolds and weakly reflective submanifold are minimal submanifolds and austere submanifolds, respectively.

First, we recall the definitions of reflective submanifolds, weakly refrective submanifolds and arid submanifolds.
 \begin{defn}[\cite{L}]
 Let $M$ be a submanifold of Riemannian manifold $(\tilde{M}, \langle , \rangle)$.
 Then $M$ is a reflective submanifold of $\tilde{M}$ if there exists an involutive isometry $\sigma_{M}$ of $\tilde{M}$
 such that $M$ is a connected component of the fixed point set of $\sigma_{M}$.
 \end{defn}
 \begin{defn} [\cite{IST2}]
 Let $M$ be a submanifold of a Riemannian manifold $(\tilde{M}, \langle , \rangle)$.
 For each normal vector $\xi \in T^{\perp}_{x}M$ at each point $x\in M$, 
 if there exists an isometry $\sigma_{\xi}$ of $\tilde{M}$ which satisfies
\begin{align*}
 \sigma_{\xi}(x)=x,\  
 (d\sigma_{\xi})_{x}(\xi)=-\xi,
 \sigma_{\xi}(M)=M,\
\end{align*}
 then we call $M$ a weakly reflective submanifold of $\tilde{M}$.
  \end{defn}
\begin{defn}[\cite{Taketomi}]
Let $M$ be a submanifold of a Riemannian manifold $(\tilde{M}, \langle , \rangle)$.
For each nonzero normal vector $\xi \in T^{\perp}_{x}M \setminus \{0\}$ at each point $x\in M$, 
if there exists an isometry $\sigma_{\xi}$ of $\tilde{M}$ which satisfies 
\begin{align*}
 \sigma_{\xi}(x)=x,\  (d\sigma_{\xi})_{x}(\xi)\neq \xi, \sigma_{\xi}(M)=M,\
\end{align*}
then we call $M$ an arid submanifold of $\tilde{M}$.  
 \end{defn}
 In order for a submanifold to be reflective, weakly reflective and arid, 
 special isometry must be exists.
 This means that these properties indicate global symmetry of submanifolds.
 
 It is known that a totally geodesic orbit of Hermann action is a reflective submanifold.
 However, austere orbits and minimal orbits of Hermann actions  is not necessary weakly reflective and arid, respectively.
In  \cite{Ohno1}, 
when $(\theta_{1}\theta_{2})^{2}=\mathrm{id}$, we examine a sufficient condition for orbits of Hermann actions to be weakly reflective in terms of symmetric triad with multiplicities.

In this section, 
we express a sufficient conditions for orbits to be weakly reflective or arid
using the contents of Section~\ref{sect-orbitspace}.

We consider the orbit $K_{2}p_{1}\subset M_{1}$ for $H\in \mathfrak{a}$.  Set $g=\exp (H),\ p_{1}=\pi_{1}(g)$.
For 
$\varepsilon \in \mathrm{U}(1)$, 
we set 
\begin{align*}
\Sigma_{\varepsilon, H} =\{\alpha \in \Sigma_{\varepsilon} \mid \langle \alpha, H \rangle +\varphi_{\varepsilon} \in \pi\mathbb{Z}\}
\end{align*}
and 
\begin{align*}
\tilde{\Sigma}_{H}=\bigcup_{\varepsilon \in \mathrm{U}(1)} \Sigma_{\varepsilon, H}.
\end{align*}
Then we have the following proposition. 
\begin{prop}\label{prop-root-system}
Let $H\in \mathfrak{a}$.
If $\tilde{\Sigma}_{H}\neq \emptyset$, 
then $\tilde{\Sigma}_{H}$ is a root system of $\mathrm{Span}(\tilde{\Sigma}_{H}) \subset \mathfrak{a}$.
\end{prop}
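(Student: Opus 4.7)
The plan is to verify the root-system axioms for the subset $\tilde{\Sigma}_{H}\subset \tilde{\Sigma}$. Since $\tilde{\Sigma}$ is already a root system by Lemma~\ref{lem-I-root}, finiteness, $0\notin \tilde{\Sigma}_{H}$, and the integrality of the Cartan numbers $\frac{2\langle\alpha,\beta\rangle}{\langle\alpha,\alpha\rangle}\in\mathbb{Z}$ for $\alpha,\beta\in \tilde{\Sigma}_{H}$ are inherited for free. The only axiom that requires genuine work is the reflection axiom $s_{\alpha}(\tilde{\Sigma}_{H})\subset \tilde{\Sigma}_{H}$ for every $\alpha\in \tilde{\Sigma}_{H}$.

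Fix $\alpha\in \Sigma_{\varepsilon,H}$ and $\beta\in \Sigma_{\delta,H}$; without loss of generality (replacing $\alpha,\varepsilon$ by $-\alpha,\varepsilon^{-1}$ if necessary) take $\alpha\in \Sigma_{\varepsilon}^{+}$, and set $k:=\frac{2\langle\alpha,\beta\rangle}{\langle\alpha,\alpha\rangle}\in\mathbb{Z}$. Using the $\mathfrak{k}_{2}$-vector $\widetilde{X}_{\alpha,i}^{\varepsilon}$ from Lemma~\ref{onb}, I would introduce
\[
k_{2}:=\exp\!\left(\frac{\pi}{\|\alpha\|}\widetilde{X}_{\alpha,i}^{\varepsilon}\right)\in K_{2}.
\]
The ad-exp computation used already in the proof of Lemma~\ref{J-tilde} shows $\mathrm{Ad}(k_{2})|_{\mathfrak{a}}=s_{\alpha}$, and hence $\mathrm{Ad}(k_{2})$ carries $\mathfrak{g}(\beta)$ bijectively onto $\mathfrak{g}(s_{\alpha}(\beta))$; in particular $s_{\alpha}(\beta)\in \tilde{\Sigma}$, so $s_{\alpha}(\beta)\in \Sigma_{\eta}$ for some $\eta\in \mathrm{U}(1)$.

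The decisive step is to pin down this $\eta$. Writing $\Theta:=\theta_{1}\theta_{2}$, the restriction $\Theta|_{V(\alpha,\varepsilon)}$ acts as rotation by angle $2\varphi_{\varepsilon}$ in the basis $(X_{\alpha,i}^{\varepsilon},Y_{\alpha,i}^{\varepsilon})$, which yields the identity $\Theta(\widetilde{X}_{\alpha,i}^{\varepsilon})=\mathrm{Ad}(\exp 2H_{\varepsilon})\widetilde{X}_{\alpha,i}^{\varepsilon}$ and hence $\Theta(k_{2})=\exp(2H_{\varepsilon})\,k_{2}\,\exp(-2H_{\varepsilon})$. Combining with $\Theta\circ\mathrm{Ad}(k_{2})=\mathrm{Ad}(\Theta(k_{2}))\circ\Theta$, and with the fact that $\mathrm{Ad}(\exp(\pm 2H_{\varepsilon}))$ acts on $\mathfrak{g}(\gamma)$ as the scalar $e^{\pm 2\sqrt{-1}\langle\gamma,H_{\varepsilon}\rangle}$, a direct manipulation yields
\[
\Theta(\mathrm{Ad}(k_{2})X)=\delta\,e^{2\sqrt{-1}\langle s_{\alpha}(\beta)-\beta,H_{\varepsilon}\rangle}\,\mathrm{Ad}(k_{2})X=\delta\varepsilon^{-k}\,\mathrm{Ad}(k_{2})X,
\]
for every $X\in \mathfrak{g}(\beta,\delta)$, so $s_{\alpha}(\beta)\in \Sigma_{\eta}$ with $\eta:=\delta\varepsilon^{-k}$.

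What remains is arithmetic modulo $\pi$. Since $\eta=\delta\varepsilon^{-k}$ forces $\varphi_{\eta}\equiv \varphi_{\delta}-k\varphi_{\varepsilon}\pmod{\pi}$, one computes
\[
\langle s_{\alpha}(\beta),H\rangle+\varphi_{\eta}\equiv (\langle\beta,H\rangle+\varphi_{\delta})-k(\langle\alpha,H\rangle+\varphi_{\varepsilon})\equiv 0\pmod{\pi},
\]
so $s_{\alpha}(\beta)\in \Sigma_{\eta,H}\subset \tilde{\Sigma}_{H}$, completing the verification of the reflection axiom. I expect the main obstacle to be the middle paragraph: tracking the $\Theta$-eigenvalue of $\mathrm{Ad}(k_{2})X$ cleanly enough to read it off as $\delta\varepsilon^{-k}$. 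Once this is established, the rest is either inherited from Lemma~\ref{lem-I-root} or a one-line congruence.
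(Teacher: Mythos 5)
Your proof is correct, but it takes a genuinely different route from the paper. The paper's proof is a two-line structural argument: it observes that $\bigl(\mathfrak{k}_{1}\cap\mathrm{Ad}(g)^{-1}\mathfrak{k}_{2}\bigr)\oplus\bigl(\mathfrak{m}_{1}\cap\mathrm{Ad}(g)^{-1}\mathfrak{m}_{2}\bigr)$ is an orthogonal symmetric Lie algebra in which $\mathfrak{a}$ is maximal abelian (this is the content of equation~(\ref{eq-slicerep})), writes down its restricted root space decomposition, and identifies the resulting restricted root system with $\tilde{\Sigma}_{H}$; the root-system axioms are then inherited from general symmetric space theory. You instead verify the reflection axiom by hand, using the element $k_{2}=\exp\bigl(\tfrac{\pi}{\|\alpha\|}\widetilde{X}_{\alpha,i}^{\varepsilon}\bigr)\in K_{2}$ already exploited in Lemma~\ref{J-tilde}, and tracking the $\theta_{1}\theta_{2}$-eigenvalue through $\mathrm{Ad}(k_{2})$. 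I checked the decisive middle step: since $H_{\varepsilon}$ is fixed by $\theta_{1}\theta_{2}$, one indeed gets $\theta_{1}\theta_{2}(k_{2})=\exp(2H_{\varepsilon})\,k_{2}\,\exp(-2H_{\varepsilon})$, and the scalar picked up on $\mathfrak{g}(\beta,\delta)$ is $e^{2\sqrt{-1}\langle s_{\alpha}(\beta)-\beta,\,H_{\varepsilon}\rangle}=\varepsilon^{-k}$, so $\eta=\delta\varepsilon^{-k}$ and the final congruence modulo $\pi$ closes the argument. What the paper's approach buys is brevity and no computation; what yours buys is explicitness --- you produce the actual elements of $K_{2}$ realizing the reflections (essentially re-deriving Lemma~\ref{lem-root-reflection}) and, beyond what the paper's proof records, you determine exactly which $\Sigma_{\eta,H}$ the reflected root lands in, information that would be convenient in the case-by-case computations of Section~\ref{classify}. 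One cosmetic remark: you should note (as you implicitly do via $s_{\alpha}=s_{-\alpha}$) that $\Sigma_{\varepsilon}$ versus $\Sigma_{\varepsilon}^{+}$ is harmless here because $\langle -\alpha,H\rangle+\varphi_{\varepsilon^{-1}}\equiv -(\langle\alpha,H\rangle+\varphi_{\varepsilon})\pmod{\pi}$, so the hypothesis $\alpha\in\Sigma_{\varepsilon,H}$ is preserved under the replacement.
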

\begin{proof}
We consider the orthogonal symmetric Lie algebra 
\begin{align*}
(\mathfrak{k}_{1} \cap \mathrm{Ad}(g)^{-1}\mathfrak{k}_{2} )
\oplus 
(\mathfrak{m}_{1} \cap \mathrm{Ad}(g)^{-1}\mathfrak{m}_{2} ).
\end{align*}
By Lemma~\ref{lem-I-root}, we can decompose the Lie algebra as the following:
\begin{align*}
\left( V(0, 1) \oplus \sum_{\varepsilon \in \mathrm{U}(1)}\sum_{\alpha \in \Sigma_{\varepsilon}^{+} \atop \langle \alpha, H\rangle +\varphi_{\varepsilon} \in \pi\mathbb{Z} }  (V(\alpha, \varepsilon )) \cap \mathfrak{k}_{1})\right)\\
\oplus \left( \mathfrak{a}  \oplus \sum_{\varepsilon \in \mathrm{U}(1)}\sum_{\alpha \in \Sigma_{\varepsilon}^{+} \atop \langle \alpha, H\rangle +\varphi_{\varepsilon} \in \pi\mathbb{Z} }  (V(\alpha, \varepsilon )) \cap \mathfrak{m}_{1})\right).
\end{align*}
It is the root space decomposition of the orthogonal symmetric Lie algebra with respect to $\mathfrak{a}$.
\end{proof}
\begin{lem}\label{lem-root-reflection}
Let $g=\exp(H)\ (H\in \mathfrak{a})$.
Then for each $\lambda \in \tilde{\Sigma}_{H}$, 
there exists $k_{\lambda }\in \mathrm{N}_{K_{2}}(\mathfrak{a})$, 
such that 
\begin{enumerate}
\item $k_{\lambda} \pi_{1}(g)=\pi_{1}(g)$
\item $(dL_{k_{\lambda}})_{\pi_{1}(g)} (dL_{g} \xi)=dL_{g}(s_{\lambda} \xi)\ \ (\xi \in \mathfrak{a})$
\end{enumerate}
\end{lem}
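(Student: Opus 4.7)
The plan is to construct $k_{\lambda}$ directly from the vectors produced by Lemma~\ref{onb}, in the same spirit as the proof of Lemma~\ref{J-tilde}. Since $\lambda \in \tilde{\Sigma}_{H}$, after replacing $\lambda$ by $-\lambda$ if necessary we may choose $\varepsilon \in \mathrm{U}(1)$ with $\lambda \in \Sigma_{\varepsilon}^{+}$ and $\langle \lambda, H\rangle + \varphi_{\varepsilon} \in \pi\mathbb{Z}$. Fix any $\widetilde{X}_{\lambda,1}^{\varepsilon}\in V(\lambda,\varepsilon)\cap \mathfrak{k}_{2}$ as in Lemma~\ref{onb}(2), and set
\begin{align*}
k_{\lambda} = \exp\!\left(\frac{\pi}{\|\lambda\|}\widetilde{X}_{\lambda,1}^{\varepsilon}\right) \in K_{2}.
\end{align*}
The calculation done in the proof of Lemma~\ref{J-tilde} shows that $\mathrm{Ad}(k_{\lambda})|_{\mathfrak{a}} = s_{\lambda}$; in particular $k_{\lambda} \in \mathrm{N}_{K_{2}}(\mathfrak{a})$.

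For condition (1), I compute $g^{-1} k_{\lambda} g = \exp\!\bigl(\tfrac{\pi}{\|\lambda\|}\,\mathrm{Ad}(g)^{-1}\widetilde{X}_{\lambda,1}^{\varepsilon}\bigr)$ and show the exponent lies in $\mathfrak{k}_{1}$. Writing $\langle \lambda, H\rangle = n\pi - \varphi_{\varepsilon}$ with $n\in \mathbb{Z}$, Lemma~\ref{onb}(2) gives $\mathrm{Ad}(\exp(-H))\widetilde{X}_{\lambda,1}^{\varepsilon} = \cos\langle\lambda,H\rangle\,\widetilde{X}_{\lambda,1}^{\varepsilon} - \sin\langle\lambda,H\rangle\,\widetilde{Y}_{\lambda,1}^{\varepsilon}$. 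Combining the addition formulas for $\cos(n\pi - \varphi_{\varepsilon})$ and $\sin(n\pi - \varphi_{\varepsilon})$ with the identity $\cos\varphi_{\varepsilon}\,\widetilde{X}_{\lambda,1}^{\varepsilon} + \sin\varphi_{\varepsilon}\,\widetilde{Y}_{\lambda,1}^{\varepsilon} = X_{\lambda,1}^{\varepsilon}$ (a direct consequence of the definitions $\widetilde{X}=\cos\varphi_{\varepsilon} X - \sin\varphi_{\varepsilon} Y$, $\widetilde{Y}=\sin\varphi_{\varepsilon} X + \cos\varphi_{\varepsilon} Y$), the expression collapses to $(-1)^{n} X_{\lambda,1}^{\varepsilon} \in \mathfrak{k}_{1}$. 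Hence $g^{-1}k_{\lambda} g \in (K_{1})_{0} \subset K_{1}$, which is equivalent to $k_{\lambda}\pi_{1}(g) = \pi_{1}(g)$.

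For condition (2), set $h = g^{-1} k_{\lambda} g \in K_{1}$. Given $\xi \in \mathfrak{a}$, the curve $t\mapsto \pi_{1}(g\exp(t\xi))$ has velocity $dL_{g}\xi$ at $p_{1}$. Then
\begin{align*}
k_{\lambda}\cdot \pi_{1}(g\exp(t\xi)) = \pi_{1}(gh\exp(t\xi)) = \pi_{1}\bigl(g\exp(t\,\mathrm{Ad}(h)\xi)\,h\bigr) = \pi_{1}\bigl(g\exp(t\,\mathrm{Ad}(h)\xi)\bigr),
\end{align*}
so differentiating at $t=0$ yields $(dL_{k_{\lambda}})_{p_{1}}\, dL_{g}\xi = dL_{g}\,\mathrm{Ad}(h)\xi$. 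Because $\mathfrak{a}$ is abelian, $\mathrm{Ad}(g)$ and $\mathrm{Ad}(g^{-1})$ act as the identity on $\mathfrak{a}$, and therefore
\begin{align*}
\mathrm{Ad}(h)\xi = \mathrm{Ad}(g^{-1})\,\mathrm{Ad}(k_{\lambda})\,\mathrm{Ad}(g)\,\xi = \mathrm{Ad}(k_{\lambda})\xi = s_{\lambda}\xi,
\end{align*}
which gives (2).

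The main obstacle is the trigonometric bookkeeping in condition (1): one must carefully unwind the change of basis from $(\widetilde{X},\widetilde{Y})$ to $(X,Y)$ and apply the addition formulas to see that the condition $\langle\lambda,H\rangle + \varphi_{\varepsilon}\in \pi\mathbb{Z}$ is \emph{precisely} what is needed to send $\widetilde{X}_{\lambda,1}^{\varepsilon}$ back into $\mathfrak{k}_{1}$ under $\mathrm{Ad}(\exp(-H))$. The computation is essentially the one already carried out in the proof of Lemma~\ref{J-tilde}, so no new idea is required beyond careful sign tracking.
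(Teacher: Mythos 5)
Your proof is correct and follows essentially the same route as the paper: the paper obtains $k_{\lambda}$ abstractly from the fact (Lemma~\ref{J-tilde}) that $\bigl(s_{\lambda}, 2\tfrac{\langle\lambda,H\rangle}{\langle\lambda,\lambda\rangle}\lambda\bigr)\in\tilde{J}$, whereas you simply instantiate the same witness $\exp\bigl(\tfrac{\pi}{\|\lambda\|}\widetilde{X}_{\lambda,1}^{\varepsilon}\bigr)$ explicitly, and your verifications of (1) and (2) amount to the same conjugation computations the paper performs.
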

\begin{proof}
By the definition of $W(\tilde{\Sigma}, \{\Sigma_{\varepsilon}\}_{\varepsilon \in \mathrm{U}(1)})$,
for each $\lambda \in \tilde{\Sigma}_{H}$, 
\begin{align*}
\left( s_{\lambda}, 2\frac{\langle \lambda, H\rangle }{\langle \lambda, \lambda \rangle} \lambda \right) \in W(\tilde{\Sigma}, \{\Sigma_{\varepsilon}\}_{\varepsilon \in \mathrm{U}(1)})
\end{align*}
Since $ W(\tilde{\Sigma}, \{\Sigma_{\varepsilon}\}_{\varepsilon \in \mathrm{U}(1)}) \subset \tilde{J}$, 
there exists $k_{\lambda } \in \mathrm{N}_{K_{2}}(\mathfrak{a})$ such that 
\begin{align*}
\left( [k_{\lambda }] , 2\frac{\langle \lambda , H\rangle }{\langle \lambda , \lambda \rangle } \lambda \right)
=\left( s_{\lambda} , 2\frac{\langle \lambda , H\rangle }{\langle \lambda , \lambda \rangle } \lambda \right).
\end{align*}
By the defnition of $\tilde{J}$, we have 
\begin{align*}
\exp \left( -2\frac{\langle \lambda , H\rangle }{\langle \lambda , \lambda \rangle } \lambda\right) k_{\lambda } \in K_{1}.
\end{align*}

For 1, 
\begin{align*}
k_{\lambda}\cdot \pi_{1}(g)
=&\pi_{1}\left( k_{\lambda } \exp (H) k_{\lambda}^{-1} \exp \left( 2\frac{\langle \lambda , H\rangle }{\langle \lambda , \lambda \rangle } \lambda \right)\right)\\
=&\pi_{1}\left( \exp \left(\mathrm{Ad}(k_{\lambda})H +2\frac{\langle \lambda , H\rangle }{\langle \lambda , \lambda \rangle } \lambda \right)\right)\\
=&\pi_{1}\left( \exp \left(s_{\lambda}(H) +2\frac{\langle \lambda , H\rangle }{\langle \lambda , \lambda \rangle } \lambda \right)\right)\\
=&\pi_{1}(\exp (H))=\pi_{1}(g).
\end{align*}

For 2, 
\begin{align*}
(dL_{k_{\lambda}})_{\pi_{1}(g)}(dL_{g} \xi)
=&\left. \frac{d}{dt} k_{\lambda}\pi_{1}(g\exp(t\xi )) \right|_{t=0}\\
=&\left. \frac{d}{dt}  \pi_{1}(k_{\lambda } \exp (H+t\xi)) \right|_{t=0}\\
=&\left. \frac{d}{dt}  \pi_{1}( \exp (\mathrm{Ad}(k_{\lambda}) (H+t\xi) ) k_{\lambda}) \right|_{t=0}\\
=&\left. \frac{d}{dt}  \pi_{1}( \exp (s_{\lambda} (H+t\xi) ) k_{\lambda}) \right|_{t=0}\\
=&\left. \frac{d}{dt}  \pi_{1}( \exp (H -2\frac{\langle \lambda , H \rangle}{\langle \lambda , \lambda \rangle }\lambda +ts_{\lambda}(\xi)) k_{\lambda}) \right|_{t=0}\\
=&\left. \frac{d}{dt}  \pi_{1}( \exp (H) \exp(ts_{\lambda}(\xi)) \exp( -2\frac{\langle \lambda , H \rangle}{\langle \lambda , \lambda \rangle }\lambda )  k_{\lambda}) \right|_{t=0}\\
=&dL_{g}d\pi_{1}(s_{\lambda}(\xi))=dL_{g}(s_{\lambda}(\xi))
\end{align*}
holds for $\xi \in \mathfrak{a}$.
\end{proof}
\begin{thm}\label{thm-arid-weakly refrective}
Let $g =\exp (H) \ (H \in \mathfrak{a})$.
If $\mathrm{Span}(\tilde{\Sigma}_{H})=\mathfrak{a}$, 
then $K_{2}p_{1}\subset M_{1}$ is an arid submanifold.
Moreover, if $\mathrm{Span}(\tilde{\Sigma}_{H})=\mathfrak{a}$
and $-\mathrm{id}_{\mathfrak{a}} \in W(\tilde{\Sigma}_{H})$, 
then $K_{2}p_{1}\subset M_{1}$ is a weakly reflective submanifold.
\end{thm}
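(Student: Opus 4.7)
The strategy is to construct the required isometries from within $K_2$ by combining Lemma~\ref{lem-root-reflection} with a slice-reduction argument based on equation~(\ref{eq-slicerep2}). Lemma~\ref{lem-root-reflection} provides, for each $\lambda \in \tilde{\Sigma}_H$, an element $k_\lambda \in \mathrm{N}_{K_2}(\mathfrak{a})$ whose induced action $L_{k_\lambda}$ on $M_1$ is an isometry fixing $p_1$, preserving the orbit $K_2 p_1$ (since $k_\lambda \in K_2$), and whose differential at $p_1$ induces the reflection $s_\lambda$ on the slice subspace $dL_g \mathfrak{a} \subset T^{\perp}_{p_1} K_2 p_1$.

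To transfer these slice isometries to an arbitrary normal vector $v \in T^{\perp}_{p_1} K_2 p_1$, I would invoke equation~(\ref{eq-slicerep2}) to write $v = dL_g \mathrm{Ad}(k)\xi'$ with $\xi' \in \mathfrak{a}$ and $k \in K_1 \cap (g^{-1} K_2 g)$, and set $h := gkg^{-1}$. Then $h \in K_2 \cap gK_1 g^{-1} = (K_2)_{p_1}$, so $L_h$ is an isometry of $M_1$ fixing $p_1$ and preserving $K_2 p_1$, while a short computation shows $(dL_h)_{p_1}(dL_g \xi') = dL_g \mathrm{Ad}(k)\xi' = v$. Consequently, if $\tilde{\sigma}$ is any isometry of $M_1$ fixing $p_1$, preserving $K_2 p_1$, and satisfying $(d\tilde{\sigma})_{p_1}(dL_g \xi') = w$, then $\sigma_v := L_h \circ \tilde{\sigma} \circ L_{h^{-1}}$ inherits the first two properties and satisfies $(d\sigma_v)_{p_1}(v) = (dL_h)_{p_1}(w)$. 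Since $dL_h$ is a linear isomorphism, the arid condition $w \neq dL_g \xi'$ transfers to $(d\sigma_v)_{p_1}(v) \neq v$, and the weakly reflective identity $w = -dL_g \xi'$ transfers to $(d\sigma_v)_{p_1}(v) = -v$. Both parts of the theorem therefore reduce to producing a suitable $\tilde{\sigma}$ for each $\xi' \in \mathfrak{a}$.

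For the arid statement, assume $\mathrm{Span}(\tilde{\Sigma}_H) = \mathfrak{a}$ and $\xi' \neq 0$. Since $\tilde{\Sigma}_H$ spans $\mathfrak{a}$, one may choose $\lambda \in \tilde{\Sigma}_H$ with $\langle \lambda, \xi'\rangle \neq 0$, so that $s_\lambda(\xi') \neq \xi'$; then $\tilde{\sigma} := L_{k_\lambda}$ does the job. For the weakly reflective statement, assume additionally $-\mathrm{id}_\mathfrak{a} \in W(\tilde{\Sigma}_H)$ and write $-\mathrm{id}_\mathfrak{a} = s_{\lambda_1}\cdots s_{\lambda_r}$ with $\lambda_i \in \tilde{\Sigma}_H$. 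The product $\tilde{\sigma} := L_{k_{\lambda_1}} \circ \cdots \circ L_{k_{\lambda_r}}$ fixes $p_1$, preserves $K_2 p_1$, and by iterated application of Lemma~\ref{lem-root-reflection} satisfies $(d\tilde{\sigma})_{p_1}(dL_g \xi') = -dL_g \xi'$ for every $\xi' \in \mathfrak{a}$; the slice reduction then produces the desired $\sigma_v$ for each normal vector $v$.

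The principal technical obstacle is the slice reduction: one must carefully verify that $h = gkg^{-1}$ lies in the isotropy $(K_2)_{p_1}$ and that $L_h \circ \tilde{\sigma} \circ L_{h^{-1}}$ genuinely inherits the properties of fixing $p_1$ and stabilizing $K_2 p_1$. That Lemma~\ref{lem-root-reflection} only prescribes $k_\lambda$'s differential on the slice $dL_g \mathfrak{a}$ (rather than on the full normal space) is not an impediment, since in the argument $\tilde{\sigma}$ is only ever evaluated on vectors lying in $dL_g \mathfrak{a}$.
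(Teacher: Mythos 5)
Your proposal is correct and follows essentially the same route as the paper: reduce to normal vectors in $dL_g\mathfrak{a}$ via equation~(\ref{eq-slicerep2}), then use Lemma~\ref{lem-root-reflection} and the fact that a spanning root system admits no nonzero $W(\tilde{\Sigma}_H)$-fixed vector (resp.\ the element $-\mathrm{id}_{\mathfrak{a}}$) to produce the required isometries from $K_2$. The only difference is that you spell out the conjugation by $L_{gkg^{-1}}$ that justifies the slice reduction, which the paper leaves implicit.
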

\begin{proof}
From (\ref{eq-slicerep2}), 
we can assume $\xi \in \mathfrak{a}$.
Now, we suppose $\mathrm{Span}(\tilde{\Sigma}_{H})=\mathfrak{a}$.
From Proposition~\ref{prop-root-system}, 
The Weyl group $W(\tilde{\Sigma}_{H})$ of $\tilde{\Sigma}_{H}$ acts on $\mathfrak{a}$.
Since $\mathrm{Span}(\tilde{\Sigma}_{H})=\mathfrak{a}$, 
a vector which is invariant under the action of $W(\tilde{\Sigma}_{H})$ is only $0$. 

Then, from Lemma~\ref{lem-root-reflection}, 
for each $\sigma \in W(\tilde{\Sigma}_{H})$, 
there exists $k_{\sigma}\in K_{2}$,  
$k_{\sigma}\pi_{1}(g)=\pi_{1}(g)$ and 
$(dL_{k_{\sigma}})_{\pi_{1}(g)}(dL_{g}(\xi))=dL_{g}(\sigma(\xi))$ holds. 
Thus  
the orbit $K_{2}\pi_{1}(g)\subset M_{1}$ is a arid submanifold of $M_{1}$

Moreover, if $-\mathrm{id}_{\mathfrak{a}} \in W(\tilde{\Sigma}_{H})$, 
then there exists  $k\in K_{2}$ such that 
$(dL_{k_{\sigma}})_{\pi_{1}(g)}(dL_{g}(\xi))=-dL_{g}(\xi)$ holds 
for all $\xi \in \mathfrak{a}$.
Therefore, 
$K_{2}\pi_{1}(g)\subset M_{1}$ is a weakly  reflective submanifold of $M_{1}$.
\end{proof}
It is known that 
the following proposition.
\begin{prop}[\cite{Tits}] \label{prop-tits--1}
Let $\Sigma$ be an irreducible root system of $\mathfrak{a}$.
then, $-\mathrm{id}_{\mathfrak{a}} \not\in W(\Sigma)$
if and only if 
$\Sigma \cong \mathrm{A}_{r},\ \mathrm{D}_{2r+1}, E_{6}\ (r\geq 2)$.
\end{prop}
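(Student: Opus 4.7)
The plan is to reduce the statement to a purely combinatorial condition on the Dynkin diagram of $\Sigma$ and then verify it case by case using the classification of irreducible root systems.

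First, I would observe that $-\mathrm{id}_{\mathfrak{a}} \in W(\Sigma)$ if and only if the longest element $w_{0}$ of $W(\Sigma)$ (with respect to a chosen fundamental system $\Pi$) equals $-\mathrm{id}_{\mathfrak{a}}$. This holds because $-\mathrm{id}_{\mathfrak{a}}$ sends every positive root to a negative root, and the longest element is the unique Weyl group element with this property. Consequently, $-w_{0}$ preserves the set of positive roots and hence permutes $\Pi$, inducing an automorphism $\nu$ of the Dynkin diagram. The statement $w_{0} = -\mathrm{id}_{\mathfrak{a}}$ is thus equivalent to $\nu = \mathrm{id}$.

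The main work is then a case-by-case determination of $\nu$ for each type. For $\mathrm{A}_{1}$, $\mathrm{B}_{r}$, $\mathrm{C}_{r}$, $\mathrm{D}_{2r}$, $\mathrm{E}_{7}$, $\mathrm{E}_{8}$, $\mathrm{F}_{4}$, $\mathrm{G}_{2}$, I would exhibit $-\mathrm{id}_{\mathfrak{a}}$ as an explicit element of $W(\Sigma)$: for instance, for $\mathrm{B}_{r}, \mathrm{C}_{r}, \mathrm{D}_{2r}$ one takes the product of reflections in a suitable system of mutually orthogonal roots spanning $\mathfrak{a}$, and for the exceptional types $\mathrm{E}_{7}, \mathrm{E}_{8}, \mathrm{F}_{4}, \mathrm{G}_{2}$ one checks directly from the standard coordinate descriptions. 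For $\mathrm{A}_{r}$ with $r \geq 2$, an explicit computation in the standard realization yields $w_{0}(\alpha_{i}) = -\alpha_{r+1-i}$, so $\nu$ is the non-trivial diagram reversal. For $\mathrm{D}_{r}$ with $r$ odd, a direct calculation shows that $w_{0}$ swaps the two branching simple roots at the forked end, so $\nu$ is non-trivial. For $\mathrm{E}_{6}$, I would verify that $-w_{0}$ realizes the unique non-trivial involution of the $\mathrm{E}_{6}$ diagram, for example by computing $w_{0}$ on the fundamental weights or by identifying $-w_{0}$ with the duality induced by the opposition involution of the associated building.

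The main obstacle is organizing the case analysis cleanly and uniformly; the computations for the classical types $\mathrm{A}_{r}$ and $\mathrm{D}_{r}$ are the most delicate, since the outcome depends on parity or on the exact index of the simple root, while the type $\mathrm{E}_{6}$ case requires a genuine identification of $-w_{0}$ with the diagram involution. The remaining types are straightforward verifications using standard coordinates.
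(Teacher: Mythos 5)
The paper offers no proof of this proposition: it is quoted directly from Tits's classification paper as a known fact, so there is no internal argument to compare yours against. Judged on its own merits, your outline is the standard and correct route. The reduction is sound: $-\mathrm{id}_{\mathfrak{a}}$ maps $\Sigma^{+}$ to $\Sigma^{-}$, and the longest element $w_{0}$ is the unique element of $W(\Sigma)$ with that property, so $-\mathrm{id}_{\mathfrak{a}}\in W(\Sigma)$ iff $w_{0}=-\mathrm{id}_{\mathfrak{a}}$ iff the opposition involution $\nu=-w_{0}$ acts trivially on the Dynkin diagram. Your case outcomes are all correct: $\nu$ is the diagram reversal for $\mathrm{A}_{r}$ ($r\geq 2$), swaps the forked nodes for $\mathrm{D}_{r}$ with $r$ odd (more precisely $w_{0}$ sends $\alpha_{r-1}$ to $-\alpha_{r}$, so $\nu$ interchanges them), and is the nontrivial involution for $\mathrm{E}_{6}$; in all remaining types $-\mathrm{id}_{\mathfrak{a}}$ lies in $W$, e.g.\ via all sign changes for $\mathrm{B}_{r}$, $\mathrm{C}_{r}$, via products $s_{e_{2i-1}-e_{2i}}s_{e_{2i-1}+e_{2i}}$ for $\mathrm{D}_{2r}$, and by direct inspection for $\mathrm{E}_{7}$, $\mathrm{E}_{8}$, $\mathrm{F}_{4}$, $\mathrm{G}_{2}$. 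The only caveat is that what you present is a plan rather than a complete verification; the classical-type computations and the $\mathrm{E}_{6}$ identification of $-w_{0}$ with the diagram involution would need to be carried out (or cited from Bourbaki's tables) to make it a full proof, but there is no gap in the strategy.
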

\section{List of non-commutative Hermann actions.}\label{list}
In this section, we introduce examples of compact symmetric triads.
Matsuki (\cite{M2}) classified pairs of involutions of compact semisimple Lie algebra under certain equivalence relation.

Here we introduce the examples whose involutions are not commutative from Matsuki's list.

Cases of $\mathfrak{g}$ is simple and $\dim \mathfrak{a} >0$.
\begin{table}[htb]
  \begin{tabular}{|c|c|c|c|c|c|} \hline
    $\mathfrak{g}$      & $\mathfrak{k}_{2}$ & $\mathfrak{k}_{2}$ & $\mathrm{dim}(\mathfrak{a})$ &$\tilde{\Sigma}$ & $l$\\ \hline \hline
    $\mathfrak{o}(m)$ & $\mathfrak{o}(p)\oplus \mathfrak{o}(q)$  & $\mathfrak{u}(m)$ & $[q/2] $ & $\mathrm{BC}_{[q/2]}$& 4\\ 
    &($p$: odd, \ $p>q$) & & & &  \\ \hline
    $\mathfrak{su}(m)$ & $\mathfrak{s}( \mathfrak{u}(p)\oplus \mathfrak{u}(q)) $  & $\mathfrak{sp}(m)$ & $[q/2] $ &$\mathrm{BC}_{[q/2]}$& 4\\ 
    &($p$: odd, \ $p>q$) & & &&  \\ \hline
    $\mathfrak{o}(8)$ & $\mathfrak{o}(5)\oplus \mathfrak{o}(3) $  & $\kappa (\mathfrak{o}(5)\oplus \mathfrak{o}(3) )$ & $2 $ & $\mathrm{G}_{2}$& 3\\ \hline
   \end{tabular}
\end{table}

Here we set 
$
I_{a, b}=
\left[
\begin{array}{c|c}
\mathrm{id}_{\mathbb{R}^{a}}&0\\ \hline
0&-\mathrm{id}_{\mathbb{R}^{b}}
\end{array}
\right]$ 
 and  $\kappa$ denotes the outer automorphism of $\mathfrak{o}(8)$ which satisfies 
$\kappa \mathrm{Ad}(I_{4,4})=\mathrm{Ad}(I_{4,4}) \kappa$ and
$\mathrm{Ad}(I'_{7,1})\kappa \mathrm{Ad}(I'_{7,1})= \kappa^{-1}$.
Here $I'_{7,1}=I_{4,4} \times I_{5,3}$.
$m\geq 3,\ p+q=m$.
$l$ denotes the order of $\theta_{1}\theta_{2} $.

\section{Weakly reflective orbits and Austere orbits}\label{classify}
In this section, we consider weakly reflective orbits and austere orbits of Hermann actions whose involutions are not commutative.
\subsection{$(\mathrm{SO}(2m), \mathrm{SO}(p)\times \mathrm{SO}(q), \mathrm{U}(m))$}
We consider the cases of $(G, K_{1},K_{2})=(\mathrm{SO}(2m), \mathrm{SO}(p)\times \mathrm{SO}(q), \mathrm{U}(m))$
($p+q=2m, p>q \geq 3,\ q:{\text{odd}}$ ). 

Then, 
the order of $\theta_{1}\theta_{2}$ is equals to $4$ and  
$\dim \mathfrak{a}=r =\frac{q-1}{2}$ holds. 
From Matsuki's list (\cite{M2}), 
we have 
\begin{align*}
\tilde{\Sigma}=&\{ \pm e_{i}\pm e_{j} \mid 1\leq i<j\leq r\} \cup \{\pm e_{i} \mid 1\leq i\leq r\} \cup \{\pm 2 e_{i} \mid 1\leq i\leq r\} \\
\cong& \mathrm{BC}_{r}\\
\Sigma_{1}   =&\{ \pm e_{i}\pm e_{j} \mid 1\leq i<j\leq r\} \cup \{\pm e_{i} \mid 1\leq i\leq r\} \cup \{\pm 2 e_{i} \mid 1\leq i\leq r\} \\ 
\cong& \mathrm{BC}_{r}\\
\Sigma_{-1}  =&\{ \pm e_{i}\pm e_{j} \mid 1\leq i<j\leq r\} \cup \{\pm e_{i} \mid 1\leq i\leq r\} 
\cong  \mathrm{B}_{r}\\
\Sigma_{\pm \sqrt{-1}}  =& \{\pm e_{i} \mid 1\leq i\leq r\} 
\cong  (\mathrm{A}_{1})^{r}, 
\end{align*}
$m(\pm e_{i}\pm e_{j}, \pm 1)=2, m(\pm e_{i}, \pm 1)=p-q, m(\pm e_{i}, \pm \sqrt{-1})=2, m(\pm 2 e_{i}, \pm 1)=1$. 

From Section~\ref{sect-orbitspace}, we have
\begin{align*}
P_{0}=&\left\{ H \in \mathfrak{a} \mid 0<\langle \alpha, H  \rangle  <\pi \ (\alpha \in \Sigma_{1}^{+})\right\} \\
&\cup \left\{ H \in \mathfrak{a} \mid -\frac{\pi}{2}<\langle \alpha, H  \rangle  <\frac{\pi}{2} \ (\alpha \in \Sigma_{-1}^{+})\right\}\\
&\cup \left\{ H \in \mathfrak{a} \mid -\frac{\pi}{4}<\langle \alpha, H  \rangle  <\frac{3 \pi}{4} \ (\alpha \in \Sigma_{\sqrt{-1}}^{+})\right\}\\
&\cup \left\{ H \in \mathfrak{a} \mid -\frac{3\pi}{4}<\langle \alpha, H  \rangle  <\frac{\pi}{4} \ (\alpha \in \Sigma_{-\sqrt{-1}}^{+})\right\}.
\end{align*} 
We set 
\begin{align*}
\tilde{\Pi}=\{\alpha_{1}=e_{1}-e_{2},\ldots , \alpha_{r-1}=e_{r-1}-e_{r}, \alpha_{r}=e_{r} \}.
\end{align*}
For each $i\in \{1, \ldots r\}$, we define $H_{i}$ by  
$\langle H_{i}, \alpha_{j}\rangle =\delta_{i, j} \ (j\in \{1,\ldots, r\})$.

Then for each $H\in \mathfrak{a}$, 
we can write $H=\sum_{i=1}^{r}x_{i}H_{i}$. 
Then $P_{0}$ described as 
\begin{align*}
P_{0}=\left\{H\in \mathfrak{a} \mid 0< x_{i}\ (i\in \{1, \ldots r\}),\ x_{1}+\cdots +x_{r} <\frac{\pi}{4} \right\}.
\end{align*}

Let we set $\tilde{\alpha}=e_{1}=\alpha_{1}+\cdots +\alpha_{r}$ and 
 
\begin{align*}
P_{0}^{\Delta}=
\left\{
\begin{array}{c|c}
 & \langle \lambda, H \rangle >0 \ (\lambda \in \Delta \cap \tilde{\Pi}) \\
H\in\mathfrak{a}  & \langle \mu , H \rangle =0 \ (\mu \in \tilde{\Pi} \setminus  \Delta  ) \\
& \langle \tilde{\alpha } , H \rangle \begin{cases} \leq \frac{\pi}{4}  \ (\ \text{if}\  \tilde{\alpha } \in \Delta )\\= \frac{\pi}{4} \ (\ \text{if}\  \tilde{\alpha } \not\in \Delta )\end{cases} 
 \end{array}
\right\}
\end{align*}
for a subset $\Delta \subset \tilde{\Pi} \cup \{\tilde{\alpha }\}$.
Then the closure $\overline{P_{0}}$ of $P_{0}$ is decomposed as 
\begin{align*}
\bigcup_{\Delta \subset \tilde{\Pi} \cup \{\tilde{\alpha}\}} P_{0}^{\Delta}\ \ (\text{disjoint union}).
\end{align*}

For each $H\in P_{0}^{\Delta}$, 
$\mathfrak{a}=\mathrm{Span} (\tilde{\Sigma}_{H})$ and $\Delta$ is a set of one point are equivalent.
Hence, from Theorem~\ref{thm-arid-weakly refrective}, 
for each $\alpha \in \tilde{\Pi} \cup \{\tilde{\alpha}\}$, 
the orbit $K_{2}p_{1}\subset M_{1}$ corresponding $H\in P_{0}^{\{\alpha \}}$ is arid. 

(0)\ When $H\in P_{0}^{\{\tilde{\alpha}\}}$, $H=0$ and   
$\tilde{\Sigma}_{H}=\Sigma_{1}\cong \mathrm{BC}_{r}$ holds.

(1) For $i\in \{1, 2, \ldots, r\}$, 
when $H\in P_{0}^{\{\alpha_{i}\}}$, $H=\frac{\pi}{4}H_{i}$ holds.
Then we have 
\begin{align*}
\Sigma_{1, H}^{+}=&\{ e_{s}-e_{t} \mid 1\leq s<t \leq i \} \cup \{e_{s}\pm e_{t} \mid 1+i\leq s<t\leq r\} \\
&\cup \{ e_{s} \mid 1+i\leq s<t\leq r \} \cup \{ 2e_{s} \mid 1+i\leq s\leq r \}, \\
\Sigma_{-1, H}^{+}=&\{ e_{s}+e_{t} \mid 1\leq s<t \leq i \},\\
\Sigma_{\sqrt{-1}, H}^{+}=&\emptyset, \\
\Sigma_{-\sqrt{-1}, H}^{+}=&\{ e_{s} \mid 1\leq s \leq i \}.
\end{align*}
Therefore, we have 
$\tilde{\Sigma}_{H} \cong \mathrm{B}_{i}\oplus \mathrm{BC}_{r-i}$. 

From Theorem~\ref{thm-arid-weakly refrective} and Proposition~\ref{prop-tits--1}, 
When $H=0, \frac{\pi}{4}H_{1}, \ldots, \frac{\pi}{4}H_{r}$, 
the orbit $K_{2}p_{1}\subset M_{1}$ is weakly reflective. 

Next, we consider austere orbits.

For $H\in \overline{P_{0}}$, a necessary and sufficient condition for the orbit $K_{2}p_{1}\subset M_{1}$ to be austere 
is given in Proposition~\ref{prop-austere}.
If $E_{H}$ is invariant under the multiplication by $-1$,
then 
$\mathbb{R} \cdot \alpha \cap E_H$ is invariant under the multiplication by $-1$
for any $\alpha \in \tilde{\Sigma}^{+}$.  

For each $i \in \{ 1, \ldots, r-1\} $, when $x_{i}\not\in \frac{\pi}{2}\mathbb{Z}$, we have  
\begin{align*}
\mathbb{R} \cdot \alpha_{i} \cap E_H=\left\{ - \cot x_{i} \alpha_{i} \ (\text{multiplicity}\ 2 ), 
  -\cot (x_{i}+\frac{\pi}{2}) \alpha_{i} \ (\text{multiplicity}\ 2 ) \right\}.
\end{align*}
When $\mathbb{R} \cdot \alpha_{i} \cap E_H$ is invariant under the multiplication by $-1$,  
$x_{i}=0$ or $\frac{\pi}{4}$ holds.
Also,  when $x_{r} \not\in \frac{\pi}{4}\mathbb{Z}$ we have 
\begin{align*}
\mathbb{R} \cdot \alpha_{r} \cap E_H
=\{ & - \cot x_{r} \alpha_{r} \ (\text{multiplicity}\ p-q ),
- \cot (2x_{r}) 2 \alpha_{r} \ (\text{multiplicity}\ 1 ), \\
&-\cot (x_{r}+\frac{\pi}{2}) \alpha_{r} \ (\text{multiplicity}\ p-q ),
-\cot (x_{r}+\frac{\pi}{4}) \alpha_{r} \ (\text{multiplicity}\ 2 ) ,\\
&-\cot (x_{r}-\frac{\pi}{4}) \alpha_{r} \ (\text{multiplicity}\ 2 ) 
  \}.
\end{align*}
When $\mathbb{R} \cdot \alpha_{r} \cap E_H$ is invariant under the multiplication by $-1$,  
$x_{r}=0$ or $\frac{\pi}{4}$.

Therefore, 
when the orbit $K_{2}p_{1}\subset M_{1}$ is austere,  $H=0, \frac{\pi}{4}H_{1}, \ldots, \frac{\pi}{4}H_{r}$ holds. 
Then the orbit $K_{2}p_{1}\subset M_{1}$ is weakly reflective. 

\subsection{$(\mathrm{SU}(2m), \mathrm{S}(\mathrm{U}(p)\times \mathrm{U}(q)), \mathrm{Sp}(m))$}
We consider the cases of $(G, K_{1},K_{2})=(\mathrm{SU}(2m), \mathrm{S}(\mathrm{U}(p)\times \mathrm{U}(q)), \mathrm{Sp}(m))$
($p+q=2m, p>q \geq 3,\ q:{\text{odd}}$ ).

Then,   
the order of $\theta_{1}\theta_{2}$ is equals to $4$ and  
$\dim \mathfrak{a}=r =\frac{q-1}{2}$ holds. 
From Matsuki's list (\cite{M2}), 
we have 
\begin{align*}
\tilde{\Sigma}=&\{ \pm e_{i}\pm e_{j} \mid 1\leq i<j\leq r\} \cup \{\pm e_{i} \mid 1\leq i\leq r\} \cup \{\pm 2 e_{i} \mid 1\leq i\leq r\} \\
\cong& \mathrm{BC}_{r}\\
\Sigma_{1}   =&\{ \pm e_{i}\pm e_{j} \mid 1\leq i<j\leq r\} \cup \{\pm e_{i} \mid 1\leq i\leq r\} \cup \{\pm 2 e_{i} \mid 1\leq i\leq r\} \\ 
\cong& \mathrm{BC}_{r}\\
\Sigma_{-1}  =&\{ \pm e_{i}\pm e_{j} \mid 1\leq i<j\leq r\} \cup \{\pm e_{i} \mid 1\leq i\leq r\} \cup \{\pm 2 e_{i} \mid 1\leq i\leq r\} \\ 
\cong& \mathrm{BC}_{r}\\
\Sigma_{\pm \sqrt{-1}}  =& \{\pm e_{i} \mid 1\leq i\leq r\} 
\cong  (\mathrm{A}_{1})^{r}, 
\end{align*}
$m(\pm e_{i}\pm e_{j}, \pm 1)=4, m(\pm e_{i}, \pm 1)=2(p-q), m(\pm e_{i}, \pm \sqrt{-1})=4, m(\pm 2 e_{i},  1)=3,m(\pm 2 e_{i},  -1)=1$.

We set 
\begin{align*}
\tilde{\Pi}=\{\alpha_{1}=e_{1}-e_{2},\ldots , \alpha_{r-1}=e_{r-1}-e_{r}, \alpha_{r}=e_{r} \}.
\end{align*}
For each $i\in \{1, \ldots r\}$, we define $H_{i}$ by  
$\langle H_{i}, \alpha_{j}\rangle =\delta_{i, j} \ (j\in \{1,\ldots, r\})$.

Then for each $H\in \mathfrak{a}$, 
we can write $H=\sum_{i=1}^{r}x_{i}H_{i}$. 
Then $P_{0}$ described as 
\begin{align*}
P_{0}=\left\{H\in \mathfrak{a} \mid 0< x_{i}\ (i\in \{1, \ldots r\}),\ x_{1}+\cdots +x_{r} <\frac{\pi}{4} \right\}.
\end{align*}
 
Let we set $\tilde{\alpha}=e_{1}=\alpha_{1}+\cdots +\alpha_{r}$ and 
\begin{align*}
P_{0}^{\Delta}=
\left\{
\begin{array}{c|c}
 & \langle \lambda, H \rangle >0 \ (\lambda \in \Delta \cap \tilde{\Pi}) \\
H\in\mathfrak{a}  & \langle \mu , H \rangle =0 \ (\mu \in \tilde{\Pi} \setminus  \Delta  ) \\
& \langle \tilde{\alpha } , H \rangle \begin{cases} \leq \frac{\pi}{4}  \ (\ \text{if}\  \tilde{\alpha } \in \Delta )\\= \frac{\pi}{4} \ (\ \text{if}\  \tilde{\alpha } \not\in \Delta )\end{cases} 
 \end{array}
\right\}
\end{align*}
for a subset $\Delta \subset \tilde{\Pi} \cup \{\tilde{\alpha }\}$.
Then the closure $\overline{P_{0}}$ of $P_{0}$ is decomposed as 
\begin{align*}
\bigcup_{\Delta \subset \tilde{\Pi} \cup \{\tilde{\alpha}\}} P_{0}^{\Delta}\ \ (\text{disjoint union}).
\end{align*}

For each $H\in P_{0}^{\Delta}$, 
$\mathfrak{a}=\mathrm{Span} (\tilde{\Sigma}_{H})$ and $\Delta$ is a set of one point are equivalent.
Hence, from Theorem~\ref{thm-arid-weakly refrective}, 
for each $\alpha \in \tilde{\Pi} \cup \{\tilde{\alpha}\}$, 
the orbit $K_{2}p_{1}\subset M_{1}$ corresponding $H\in P_{0}^{\{\alpha \}}$ is arid.

(0)\ When $H\in P_{0}^{\{\tilde{\alpha}\}}$, $H=0$ and   
$\tilde{\Sigma}_{H}=\Sigma_{1}\cong \mathrm{BC}_{r}$ holds.

(1) For $i\in \{1, 2, \ldots, r\}$, 
when $H\in P_{0}^{\{\alpha_{i}\}}$, $H=\frac{\pi}{4}H_{i}$ holds.
Then we have 
\begin{align*}
\Sigma_{1, H}^{+}=&\{ e_{s}-e_{t} \mid 1\leq s<t \leq i \} \cup \{e_{s}\pm e_{t} \mid 1+i\leq s<t\leq r\} \\
&\cup \{ e_{s} \mid 1+i\leq s<t\leq r \} \cup \{ 2e_{s} \mid 1+i\leq s\leq r \}, \\
\Sigma_{-1, H}^{+}=&\{ e_{s}+e_{t} \mid 1\leq s<t \leq i \} \cup \{ 2e_{s} \mid 1\leq s\leq s \},\\
\Sigma_{\sqrt{-1}, H}^{+}=&\emptyset, \\
\Sigma_{-\sqrt{-1}, H}^{+}=&\{ e_{s} \mid 1\leq s \leq i \}.
\end{align*}
Therefore, we have 
$\tilde{\Sigma}_{H} \cong \mathrm{BC}_{i}\oplus \mathrm{BC}_{r-i}$. 

From Theorem~\ref{thm-arid-weakly refrective} and Proposition~\ref{prop-tits--1}, 
When $H=0, \frac{\pi}{4}H_{1}, \ldots, \frac{\pi}{4}H_{r}$, 
the orbit $K_{2}p_{1}\subset M_{1}$ is weakly reflective. 

Next, we consider austere orbits.

If $E_{H}$ is invariant under the multiplication by $-1$,
then 
$\mathbb{R} \cdot \alpha \cap E_H$ is invariant under the multiplication by $-1$
for any $\alpha \in \tilde{\Sigma}^{+}$.  
For each $i \in \{ 1, \ldots, r-1\} $, when $x_{i}\not\in \frac{\pi}{2}\mathbb{Z}$, we have  

\begin{align*}
\mathbb{R} \cdot \alpha_{i} \cap E_H=\left\{ - \cot x_{i} \alpha_{i} \ (\text{multiplicity}\ 4 ), 
  -\cot (x_{i}+\frac{\pi}{2}) \alpha_{i} \ (\text{multiplicity}\ 4 ) \right\}
\end{align*}

When $\mathbb{R} \cdot \alpha_{i} \cap E_H$ is invariant under the multiplication by $-1$,  
$x_{i}=0$ or $\frac{\pi}{4}$.
Also,  when $x_{r} \not\in \frac{\pi}{4}\mathbb{Z}$ we have 
\begin{align*}
&\mathbb{R} \cdot \alpha_{r} \cap E_H\\
=\{ & - \cot x_{r} \alpha_{r} \ (\text{multiplicity}\ 2(p-q) ),
- \cot (2x_{r}) 2 \alpha_{r} \ (\text{multiplicity}\ 3 ), \\
&-\cot (x_{r}+\frac{\pi}{2}) \alpha_{r} \ (\text{multiplicity}\ 2(p-q) ),
- \cot (2x_{r}+\frac{\pi}{2}) 2 \alpha_{r} \ (\text{multiplicity}\ 1 ), \\
&-\cot (x_{r}+\frac{\pi}{4}) \alpha_{r} \ (\text{multiplicity}\ 4 ) ,
-\cot (x_{r}-\frac{\pi}{4}) \alpha_{r} \ (\text{multiplicity}\ 4 ) 
  \}.
\end{align*}
Then, at least one of the following equations holds:
\begin{align*}
\begin{cases}
\cot x_{r}=\tan x_{r}\\
\cot x_{r}=\tan (x_{r}+\frac{\pi}{4})\\
\cot x_{r}=2\tan 2x_{r}.\\
\end{cases}
\end{align*}
Thus, we have $x_{r}=\frac{\pi}{4}, \frac{\pi}{8}$ or $(\cot x_{r})^{2}=5$. 
By a simple calculation shows that 
if $x_{r}= \frac{\pi}{8}$ or $(\cot x_{r})^{2}=5$ holds, 
then $\mathbb{R} \cdot \alpha_{r} \cap E_H$ is not invariant under the multiplication by $-1$. 

Therefore, $x_{r} =0, \frac{\pi}{4}$ holds when the orbit $K_{2}p_{1}\subset M_{1}$ is austere. 

Therefore, 
when the orbit $K_{2}p_{1}\subset M_{1}$ is austere,  $H=0, \frac{\pi}{4}H_{1}, \ldots, \frac{\pi}{4}H_{r}$ holds. 
Then the orbit $K_{2}p_{1}\subset M_{1}$ is weakly reflective.

\subsection{$(\mathrm{SO}(8), \mathrm{SO}(5)\times \mathrm{SO}(3), \kappa( \mathrm{SO}(5)\times \mathrm{SO}(3))  )$}
We consider the cases of 
$(G, K_{1},K_{2})=(\mathrm{SO}(8), \mathrm{SO}(5)\times \mathrm{SO}(3), \kappa( \mathrm{SO}(5)\times \mathrm{SO}(3))  )$.

Then,   
the order of $\theta_{1}\theta_{2}$ is equals to $3$ and  
$\dim \mathfrak{a}=2$ holds. 
From Matsuki's list (\cite{M2}), 
we have 
\begin{align*}
\tilde{\Sigma}\cong  \mathrm{G}_{2},
\Sigma_{1}   \cong\mathrm{G}_{2}, 
\Sigma_{ \omega}= \{ \text{Short roots} \} \cong \mathrm{A}_{2}, 
\Sigma_{\omega^{-1}}=\{\text{Short roots} \} \cong \mathrm{A}_{2},
\end{align*}
$m(\alpha , \pm 1)=1\  (\alpha \in \Sigma_{1}), m(\beta ,  \omega)=1\  (\beta \in \Sigma_{ \omega}), m(\beta ,  \omega^{-1})=1 \ (\beta \in \Sigma_{ \omega^{-1}}) $. 
Here, $\omega$ denotes the primitive third root of unity.

We set 
\begin{align*}
\tilde{\Pi}=\{\alpha_{1}, \alpha_{2} \}.
\end{align*}
Then 
\begin{align*}
\tilde{\Sigma}^{+}=\{ \alpha_{1}, \alpha_{2}, \alpha_{1}+ \alpha_{2}, 2\alpha_{1}+ \alpha_{2}, 3\alpha_{1}+ \alpha_{2}, 3\alpha_{1}+ 2\alpha_{2}\} 
\end{align*}
and  
$\langle \alpha_{1}, \alpha_{1} \rangle=2, \langle \alpha_{1}, \alpha_{2} \rangle=-1 , \langle \alpha_{2}, \alpha_{2} \rangle=6$
holds.

For $i=1,2$, 
we define $H_{1}$ by 
$\langle H_{i}, \alpha_{j}\rangle =\delta_{i, j} \ (j=1,2)$

Then for each $H\in \mathfrak{a}$, 
we can write $H=x_{1}H_{1}+x_{2}H_{2}$. 
Then $P_{0}$ described as 
\begin{align*}
P_{0}=\left\{H\in \mathfrak{a} \mid 0< x_{i}\ (i\in \{1, 2\}),\ 2 x_{1}+ x_{2} <\frac{\pi}{3} \right\}.
\end{align*}

Let we set $\tilde{\alpha}=2\alpha_{1}+\alpha_{2}$ and 
for a subset $\Delta \subset \tilde{\Pi} \cup \{\tilde{\alpha }\}$, we set 
\begin{align*}
P_{0}^{\Delta}=
\left\{
\begin{array}{c|c}
 & \langle \lambda, H \rangle >0 \ (\lambda \in \Delta \cap \tilde{\Pi}) \\
H\in\mathfrak{a}  & \langle \mu , H \rangle =0 \ (\mu \in \tilde{\Pi} \setminus  \Delta  ) \\
& \langle \tilde{\alpha } , H \rangle \begin{cases} \leq \frac{\pi}{3}  \ (\ \text{if}\  \tilde{\alpha } \in \Delta )\\= \frac{\pi}{3} \ (\ \text{if}\  \tilde{\alpha } \not\in \Delta )\end{cases} 
 \end{array}
\right\}
\end{align*}
Then the closure $\overline{P_{0}}$ of $P_{0}$ is decomposed as 
\begin{align*}
\bigcup_{\Delta \subset \tilde{Pi} \cup \{\tilde{\alpha}\}} P_{0}^{\Delta}\ \ (\text{disjoint union}).
\end{align*}

For each $H\in P_{0}^{\Delta}$, 
$\mathfrak{a}=\mathrm{Spam} (\tilde{\Sigma}_{H})$ and $\Delta$ is a set of one point are equivalent.
Hence, from Theorem~\ref{thm-arid-weakly refrective}, 
for each $\alpha \in \tilde{\Pi} \cup \{\tilde{\alpha}\}$, 
the orbit $K_{2}p_{1}\subset M_{1}$ corresponding $H\in P_{0}^{\{\alpha \}}$ is arid. 

(0)\ When $H\in P_{0}^{\{\tilde{\alpha}\}}$, $H=0$ and 
$\tilde{\Sigma}_{H}=\Sigma_{1}\cong \mathrm{G}_{2}$.

(1) 
When $H\in P_{0}^{\{\alpha_{1}\}}$, $H=\frac{\pi}{6}H_{1}$ holds.
Then we have 
\begin{align*}
\Sigma_{1, H}^{+}=\{\alpha_{2} \},
\Sigma_{\omega, H}^{+}=\emptyset ,
\Sigma_{\omega^{-1}, H}^{+}=\{ 2\alpha_{1}+\alpha_{2}\}.
\end{align*}
Therefore, we have 
$\tilde{\Sigma}_{H} \cong \mathrm{A}_{1}\oplus \mathrm{A}_{1}$. 

(2) 
When $H\in P_{0}^{\{\alpha_{2}\}}$, $H=\frac{\pi}{3}H_{2}$ holds.
Then we have 
\begin{align*}
\Sigma_{1, H}^{+}=\{\alpha_{1} \}, 
\Sigma_{\omega, H}^{+}=\emptyset ,
\Sigma_{\omega^{-1}, H}^{+}=\{ \alpha_{1}+\alpha_{2} , 2\alpha_{1}+\alpha_{2}\} .
\end{align*}
Therefore, we have 
$\tilde{\Sigma}_{H} \cong \mathrm{A}_{2}$. 

From Theorem~\ref{thm-arid-weakly refrective} and Proposition~\ref{prop-tits--1}, 
When $H=0, \frac{\pi}{6}H_{1}$, 
the orbit $K_{2}p_{1}\subset M_{1}$ is weakly reflective. 

Next, we consider austere orbits.
If $E_{H}$ is invariant under the multiplication by $-1$,
then 
$\mathbb{R} \cdot \alpha \cap E_H$ is invariant under the multiplication by $-1$
for any $\alpha \in \tilde{\Sigma}^{+}$.  

When $x_{1}\not\in \frac{\pi}{3}\mathbb{Z}$, 
we have
\begin{align*}
\mathbb{R} \cdot \alpha_{1} \cap E_H
=\left\{ - \cot x_{1} \alpha_{1} \ (\text{multiplicity}\ 1 ), 
  -\cot (x_{1}+\frac{\pi}{3}) \alpha_{1} \ (\text{multiplicity}\ 1 ), \right. \\
    \left.-\cot (x_{1}-\frac{\pi}{3}) \alpha_{1} \ (\text{multiplicity}\ 1 ) \right\}.
\end{align*}
Thus, 
When $\mathbb{R} \cdot \alpha_{1} \cap E_H$ is invariant under the multiplication by $-1$,  
$x_{1}=0$ or $\frac{\pi}{6}$.

When $x_{2}\not\in \frac{\pi}{3}\mathbb{Z}$, 
we have
\begin{align*}
\mathbb{R} \cdot \alpha_{r} \cap E_H
=\{  - \cot x_{2} \alpha_{2} \ (\text{multiplicity}\ 1 ),  \}.
\end{align*}
Thus we have $x_{2}=0$. 

Therefore, 
when the orbit $K_{2}p_{1}\subset M_{1}$ is austere,  $H=0, \frac{\pi}{6}H_{1}$ holds. 
Then the orbit $K_{2}p_{1}\subset M_{1}$ is weakly reflective. 

If $H=\frac{\pi}{3}H_{2}$, then the orbit $K_{2}p_{1}\subset M_{1}$ is an arid orbit which is not austere. 


\end{document}